\definecolor{bleuf}{rgb}{0.1,0.1,0.6}
\renewcommand{\section}{\@startsection{section}{1}{0mm}%
{\baselineskip}{\baselineskip}%
{\large\bfseries\centering}}
\renewcommand{\subsection}{\@startsection{subsection}{2}{0mm}%
{0.5\baselineskip}{0.2\baselineskip}%
{\large\bfseries}}
\renewcommand{\subsubsection}{\@startsection{subsubsection}{1}{0mm}%
{0.4\baselineskip}{0.1\baselineskip}%
{\slshape\bfseries}}
\renewcommand{\subsubsection}{\@startsection{subsubsection}{1}{0mm}%
{0.4\baselineskip}{0.1\baselineskip}%
{\slshape\bfseries}}
\newcommand*{\andname}{\&}
\numberwithin{equation}{section} 
\DeclareMathOperator{\NN}{{I\hspace{-.15em}N}}
\DeclareMathOperator{\R}{{I\hspace{-.15em}R}}
\DeclareMathOperator{\PP}{{I\hspace{-.15em}P}}
\DeclareMathOperator{\EE}{{I\hspace{-.15em}E}}
\DeclareMathOperator{\un}{1\hspace{-.29em}I}
\def\st{\text{ s. t. }}
\newcommand\bb[1]{\boldsymbol{#1}}
\newcommand\mc[1]{\mathcal{#1}}
\newcommand{\intervalle }[4]{\mathopen{#1}#2
\mathclose{}\mathpunct{};#3
\mathclose{#4}}
\newcommand\enum[2]{{\intervalle\llbracket {#1}{#2}\rrbracket}}
\newcommand\enu[1]{{\intervalle\llbracket {1}{#1}\rrbracket}}
\newcommand\eq{\equiv}
\newtheorem{thm}{Theorem}[section]
\newtheorem{lem}{Lemma}[section]
\newtheorem{prop}{Proposition}[section]
\newtheorem{cor}{Corollary}[section]
\theoremstyle{definition}
\newtheorem{ex}{Example}[section]
\newtheorem{exs}[ex]{Examples}
\newtheorem{rem}{Remark}[section]
\newcounter{hyp}
\newenvironment{hyp}[1][]{\refstepcounter{hyp}\par\noindent \textbf{ Assumption~H\thehyp: #1} \itshape}{\rmfamily\par}
\newcommand{\Href}[1]{H\ref{#1}}
\newcommand{\Sref}[1]{\S\ref{#1}}
\begin{document}
\title{\textsc{Diffusion approximation of a multilocus model\\ with assortative mating}} 
\author{A. M. Etheridge\thanks{Department of Statistics, University of Oxford, 1 South Parks Road, Oxford OX1 3TG, UK; email:  
\url{etheridg@stats.ox.ac.uk}. AME supported in part by EPSRC grant no. EP/G052026/1} \and S. Lemaire\thanks{Univ. Paris-Sud, Laboratoire de Math\'ematiques, UMR 8628, Orsay
F-91405; CNRS, Orsay, F-91405;  email: \url{sophie.lemaire@math.u-psud.fr}.}}
\date{}
\maketitle
\bigskip
\begin{abstract}To understand the effect of assortative mating on the genetic evolution of a population, 
we consider a finite population in which each individual has a type, determined by a sequence of $n$ diallelic loci.  
We assume that the population evolves according to a Moran model with weak assortative mating, strong recombination 
and  low mutation rates. With an appropriate rescaling of  time, we obtain that the evolution of the genotypic frequencies  
in a large population can be approximated by the evolution of the product of the allelic frequencies at each locus, 
and the vector of the allelic frequencies is approximately governed by a diffusion.  
We present some features of the limiting 
diffusions (in particular their boundary behaviour and  conditions under which the allelic frequencies 
at different loci evolve independently). If mutation rates are strictly positive then the limiting 
diffusion is reversible and, under some assumptions, the critical points of the stationary density 
can be characterised.  
\end{abstract}
\bigskip
\textbf{AMS 2000 Subject Classification}: 60J20, 92D25, 60J70
\bigskip\\
\textbf{Key words}: Moran model, population genetics, multilocus models, assortative mating, diffusion approximation.
\newpage
\section{Introduction}
The aim of this paper is to construct and analyse a diffusion approximation for a diallelic multilocus 
reproduction model with assortative mating, recombination and mutation. 
Our starting point is a variant of the Moran model.  We suppose that the population is monoecious\footnote{Every individual has both male and female sexual organs.},  
haploid\footnote{Each cell has one copy of each chromosome.} and of constant size $N$.  This will be
an overlapping generation model, but, in contrast to the usual Moran framework, we suppose that reproduction takes place 
at discrete times  1, 2, \ldots.
 In each time step, a mating event occurs between two individuals $I_1$ and $I_2$;  $I_1$ is replaced 
by an offspring, so that the size of the population is kept constant.  The genotype of the offspring 
is obtained from those of $I_1$ and $I_2$ through a process of recombination followed by mutation
which we make precise in \Sref{sectmodel}.
In the classical Moran model, the two individuals $I_1$ and $I_2$ are chosen at random from the population. 
Here, to study the effects of assortative mating, we assume that the 
first individual, $I_1$, is still chosen at random, but the second individual, $I_2$, is sampled with a 
probability that depends on its genotype and on the genotype of the first selected individual. 
The genotype of an individual is composed of a finite number, $n$, of loci  with two alleles per locus 
denoted by $0$ and $1$. To characterise the assortative mating, we introduce a real parameter 
$s_{\bb{i},\bb{j}}$ for every pair of genotypes $(\bb{i},\bb{j})$.  
If $I_1$ has genotype $\bb{i}$, then, in the draw of $I_2$,
an individual with genotype $\bb{j}$ has a probability proportional to $1+\frac{1}{N}s_{\bb{i},\bb{j}}$ of 
being selected. 

Diffusion approximations for different selection-mutation models have been studied 
extensively in the 
one-locus case (see, for example, \citealt{EthierKurtz}, Chapter 10). 
The coefficients $1+\frac{1}{N}s_{\bb{i},\bb{j}}$ of our model play the same r\^{o}le as the 
(viability) selection coefficients in a Wright-Fisher model for a diploid population.
Since they depend on the types of both parents they result in nonlinear (frequency
dependent) selection (see \Sref{secttwolocusWF}). \cite{EthierNagylaki89} 
study two-locus Wright-Fisher models for a 
panmictic\footnote{Every individual is equally likely to mate with every other.}, monoecious, diploid population of constant size under various assumptions on selection and 
recombination. Depending on the strength of the linkage between the two loci, they obtain different 
types of diffusion approximation: limiting diffusions for gametic\footnote{Gametes are produced during reproduction.
A gamete contains a single copy of each chromosome, composed of segments of the two chromosomes in the corresponding parent.  Two gametes, one from each parent, fuse to produce an offspring.} frequencies if the recombination 
fraction multiplied by the population size tends to a constant as the size tends to $+\infty$ 
(so-called \emph{tight linkage}) and  limiting diffusions for allelic frequencies if the recombination 
fraction multiplied by the population size tends to $+\infty$ (so-called \emph{loose linkage}).
To our knowledge, this work has not been extended to more general multilocus models with
recombination and assortative mating.
Nevertheless, there is a large 
body of work on multilocus genetic systems.  Most theoretical investigations assume that the size of 
the population is infinite, so that the random genetic drift can be ignored; the evolution of genotypic 
frequencies is then described by recursive equations or by differential equations 
(see \citealt{Christiansen} and references within). A comparison between infinite and finite population models with random mating is presented in  \cite{BaakeHerms}. 
A review of several simulation studies can be found 
in the introduction of \cite{Devaux}. Among these,  
the `species formation model', introduced by \cite{Higgs} inspired our work. In their model, mating is only possible between 
individuals with sufficiently similar genotypes, so that from the point of view of reproduction
the population is split into isolated subgroups.  Their simulations display a succession of divisions 
and extinctions of subgroups.  In this paper we generalise their assortative mating criterion to one 
defined through the family of parameters $s_{\bb{i},\bb{j}}$ and provide a general theoretical 
treatment.

To give an overview of our results, we first consider a particular pattern of  assortative mating. 
Let us assume that the frequency of matings between two individuals of types $\bb{i}$ and $\bb{j}$ 
depends only on the number of loci at which their allelic types differ (and not on the positions of
those loci along the genome).  We then have a model with $n+1$ assortment parameters, denoted by 
$s_0,\ldots,s_n$, obtained by setting $s_{\bb{i},\bb{j}}=s_{k}$ if the genotypes $\bb{i}$, $\bb{j}$ 
are different at exactly $k$ loci (regardless of their positions). This mating criterion will be called 
the \emph{Hamming criterion} in what follows. A decreasing sequence $s_0\geq s_1\geq \ldots \geq s_n$ 
will describe a positive assortative mating (individuals mate preferentially with individuals that are 
similar).  An increasing sequence $s_0\leq s_1\leq \ldots \leq s_n$ will describe a negative assortative 
mating (individuals mate preferentially with individuals that are dissimilar).

We establish a weak convergence of the Markov chain describing the genetic evolution of the population 
as its size tends to $+\infty$, under a hypothesis on the recombination distribution that corresponds to 
loose linkage (during each reproduction event, 
recombination between any pair of loci occurs with a positive probability)  and under the assumption that  mutations occur independently at each locus with the same 
rates (at each locus, the rate of mutation of a type $0$ allele to a type $1$ allele is $\frac{\mu_{0}}{N}$ 
and the rate of mutation of a type $1$ to a type $0$ is $\frac{\mu_{1}}{N}$). 
In particular, while mutation and assortment parameters are rescaled with population size, 
recombination is not.  As a result, we see a {\em separation of timescales}.
Due to recombination, the genotypic frequencies rapidly converge to a product distribution which is 
characterised by its marginals, that is by the $0$-allelic frequencies at each locus. 
We show that, at a slower rate, the allelic frequencies converge to a multidimensional diffusion,
whose components are 
coupled only through an infinitesimal drift term (in the mathematical sense) arising from the assortative mating. \\

 Let us describe some  features of the limiting diffusion. 
If  $s_{1}-s_{0}=s_{2}-s_{1}=\ldots=s_{n}-s_{n-1}$ then the frequencies of the $0$-allele
at each locus evolve according to independent Wright-Fisher diffusions with mutation rates $\mu_0$ and 
$\mu_1$ and symmetric balancing selection with strength $\frac{1}{2}(s_1-s_0)$; that is they solve
the following stochastic differential equation:
\begin{multline*}
dx_t=\sqrt{x_t(1-x_t)}dW_t+\Big(\mu_1(1-x_t)-\mu_0x_{t}+(s_1-s_0)(1/2-x_t)x_t(1-x_t)\Big)dt,
\end{multline*}
where $(W_t)_{t\geq 0}$ is a standard Brownian motion. 
In all other cases, the allelic frequencies at different loci no longer evolve independently.  Instead
the vector of $0$-allelic frequencies $(x_t(1),\ldots,x_t(n))$ is governed by the stochastic differential 
equation:
\begin{multline}
\label{hammingsde}
dx_t(i)=\sqrt{x_t(i)(1-x_t(i))}dW_t(i)\\+\Big(\mu_1(1-x_t(i))-\mu_0x_{t}(i)+(1/2-x_t(i))x_t(i)(1-x_t(i))P_{i,s}(x_t)\Big)dt,
\end{multline}
where $(W_t(1))_{t\geq 0}$,\ldots, $(W_t(n))_{t\geq 0}$ denote $n$ independent standard Brownian motions 
and $P(\hat{x}^{(i)})$ is a symmetric polynomial function of the $n-1$ variables 
$x(j)(1-x(j))$,  $j\in\{1,\ldots,n\}\setminus\{i\}$ whose coefficients depend only on the parameters 
$s_1-s_0,\ldots,s_n-s_{n-1}$. More precisely, $P(\hat{x}^{(i)})$ is an increasing function of each 
parameter $s_1-s_0,\ldots,s_n-s_{n-1}$ (see Theorem \ref{thgen} for an explicit formula of $P(\hat{x}^{(i)})$). 
When the mutation rates $\mu_0$ and $\mu_1$ are strictly positive, the limiting diffusion has a reversible 
stationary measure, the density of which is explicit. When the two mutation rates are equal to $\mu>0$,  
we describe the properties of the critical points of the density of the stationary measure. 
In particular, we find sufficient conditions on $\mu$ and $s_1-s_0,\ldots,s_n-s_{n-1}$ for the state 
where the frequencies of the two alleles are equal to $1/2$ at each locus to be a global maximum and 
for the stationary measure to have $2^n$ modes. These sufficient conditions generalise the independent case. 
For example, when $\mu>1/2$  they imply the following results:
\begin{enumerate}
\item if $s_{\ell}-s_{\ell-1}\geq -(8\mu-4)$ for every $\ell\in\{1,\ldots,n\}$, 
then $(1/2,\ldots,1/2)$ is the only mode of the stationary measure;
\item if $s_{n}-s_{n-1}\leq \ldots \leq s_{1}-s_{0} < -(8\mu-4)$ then the stationary measure 
has $2^{n}$ modes. 
\end{enumerate}
 
These results can be extended to other patterns of assortative mating. In fact, we need only make the 
following assumption on the parameters $s_{\bb{i},\bb{j}}$: the value of the assortment parameter 
$s_{\bb{i},\bb{j}}$ between two genotypes $\bb{i}$ and $\bb{j}$ is assumed to be the same as the value 
of $s_{\bb{j},\bb{i}}$ and to depend only on the loci at which $\bb{i}$ and $\bb{j}$ differ. 
In particular this implies that the value of $s_{\bb{i},\bb{i}}$ is the same for every genotype $\bb{i}$. 
This generalises the Hamming criterion and allows us to consider more realistic situations in which
the influence on mating choice differs between loci (see \Sref{sec: assumptions}).
It transpires that, under these assumptions, the limiting diffusion does not depend on the whole family 
of assortment parameters, but only on one coefficient per subgroup of loci $L$.  We denote this
coefficient $m_{L}(s)$.  It is the mean of the assortment parameters for pairs of genotypes that 
carry different alleles on each locus in $L$ and identical alleles on all other loci. 
The stochastic differential equation followed by the limiting diffusion can still be described by 
equation~\eqref{hammingsde} if the symmetric polynomial term $P(\hat{x}^{(i)})$ in the drift of the 
$i$-th coordinate is replaced by a non-symmetric polynomial term
$P_i(\hat{x}^{(i)})$ in the coefficients of which the quantities
$m_{L\cup \{i\}}(s)-m_{L}(s)$ for $L\subset \{1,\ldots,n\}$ 
replace $s_1-s_0$,\ldots, $s_{n}-s_{n-1}$. \\

The rest of the paper is organized as follows. In \Sref{sectmodel}, we present our multilocus Moran model.
In \Sref{sectonelocus}, we describe the diffusion approximation for the one-locus model and compare it with a diffusion 
approximation for a population undergoing mutation and `balancing selection'.  We recall some 
well-known properties of this diffusion, in particular the boundary behaviour and the
form of the stationary measure, for later comparison with the multilocus case. In \Sref{secconv}, we state our main
result concerning convergence to a diffusion approximation in the multilocus case (Theorem~\ref{thgen})
and give two equivalent expressions for the limiting diffusion.  We then compare with the two-locus 
diffusion approximation obtained in \cite{EthierNagylaki89}. The proof of Theorem~\ref{thgen} 
is postponed until \Sref{secconvproof}. In \Sref{sectdescrlim}, we derive some general properties of the limiting diffusion.  
\Sref{sectstatmeas} is devoted to the study of the density of the stationary measure.  An appendix collects some 
technical results used in the description of the limiting diffusion. 
\section{The discrete model\label{sectmodel}}
This section is devoted to a detailed presentation of the individual based model. 
The assumptions on assortative mating, recombination and mutation that we will require 
to establish a diffusion approximation for the allelic frequencies are discussed at the end of the section.  

\subsection{Description of the model}
We consider  a monoecious and haploid population of size $N$ where the type of each individual is 
described by a sequence of $n$ diallelic loci. For the sake of brevity, let the set of loci be 
identified with the set of integers $\enu{n}:=\{1,\ldots,n\}$ and let the two alleles at each locus 
be labelled $0$ and $1$. The type of an individual is then identified by an $n$-tuple
$\bb{k}:=(k_1,...,k_n)\in\{0,1\}^n$. Let $\mathcal{A}=\{0,1\}^n$ be the set of possible types. 
The proportion of individuals of type $\bb{k}$ at time $t\in\NN$ will be  denoted by 
$Z^{(N)}_t(\bb{k})$ so that the composition of the population is described by the set 
$Z^{(N)}_t=\{Z^{(N)}_t(\bb{k}),\ \bb{k}\in \mathcal{A}\}$. At each unit of time the population evolves 
under the effect of assortative mating, recombination and mutation as follows.
\paragraph{Assortative mating:} 
at each time $t$, two individuals are sampled from the population in such a way that: 
\begin{enumerate}
\item  the first individual has probability $Z^{(N)}_t(\bb{i})$ of being of type $\bb{i}$;  
\item  given that the first individual chosen is of type $\bb{i}$, the probability 
that the second individual is of type $\bb{j}$ is
$$\frac{\big(1+s^{(N)}_{\bb{i},\bb{j}}\big)Z_t^{(N)}(\bb{j})}{\sum_{\bb{k}\in\mathcal{A}}
\big(1+s^{(N)}_{\bb{i},\bb{k}}\big)Z_t^{(N)}(\bb{k})},$$
where the {\em assortment parameters}
$\{s^{(N)}_{\bb{i},\bb{j}}, \bb{i},\bb{j}\in\mc{A}\}$ are fixed nonnegative real numbers\footnote{We are allowing a small chance of self-fertilisation.}. 
\end{enumerate}
The population at time $t+1$ is obtained by replacing the first chosen individual with an 
offspring whose type is the result of the following process of recombination followed by mutation.
\paragraph{Recombination:} for each subset $L$ of $\enu{n}$, let $r_L$ denote the probability that 
the offspring inherits the genes of the first chosen parent at loci $\ell\in L$ and the genes of the 
second parent at loci $\ell\not\in L$. The family of parameters $\{r_L,\ L\subset\enu{n}\}$ defines a 
probability distribution, called the \emph{recombination distribution}, on the power set 
$\mc{P}(\enu{n})$ (it was first introduced in this manner by \cite{Geiringer} 
to describe 
the recombination-segregation of gametes in a diploid population).  It is natural to assume that the 
two parents contribute symmetrically to the offspring genotype, that is:
\begin{hyp}\label{Hrecombsym} for each subset $L$ of $\enu{n}$, $r_{L}=r_{\bar{L}}$ where $\bar{L}$ 
denotes the complementary set of loci, $\enu{n}\setminus L$.  
\end{hyp}
With this notation, the probability that, before mutation, the offspring of a pair of individuals of types
$(\bb{i},\bb{j})$ is of type $\bb{k}$ is 
$$q((\bb{i},\bb{j});\bb{k})=\sum_{L\subset \enu{n}}r_L\un_{\{\bb{k}=(\bb{i}_{|L},\bb{j}_{|\bar{L}})\}}.$$
Let us express some classical examples of recombination distributions in this notation:
\begin{exs}\label{egrecomb}\
\begin{enumerate}
\item $r_{\emptyset}=r_{\enu{n}}=\frac{1}{2}$ (no recombination, also called absolute linkage)
\item $r_{I}=2^{-n}$ for each $I\in\mathcal{P}(\enu{n})$ (free recombination)
 \item $r_{\enu{x}}=r_{\enum{x+1}{n}}=\frac{r}{2(n-1)}$ for $1\leq x \leq n-1$ and $r_{\emptyset}=r_{\enu{n}}=1/2(1-r)$ where $r$ denotes an element of $]0,1]$ (at most one exchange between the sequence of loci  which occurs with equal probability at each position).
\end{enumerate}
\end{exs}
Finally we superpose mutation.
\paragraph{Mutation:}  we assume that mutations occur independently and at the same rate at 
each locus: $\mu^{(N)}_1$ will denote the probability that an allele $1$  at a given locus 
of the offspring changes into allele $0$ and $\mu^{(N)}_0$ the probability of the reverse mutation. 
The probability that the mutation process changes a type $\bb{k}$ into a type $\bb{\ell}$ is: 
$$\mu^{(N)}(\bb{k},\bb{\ell}):=\prod_{i=1}^{n}(\mu^{(N)}_{k_i})^{|\ell_i-k_i|}(1-\mu^{(N)}_{k_i})^{1-|\ell_i-k_i|}.$$ 
\subsection{Expression for the transition probabilities}
It is now elementary to write down an expression for the transition probabilities of our model.
In the notation above, if $z=\{z(\bb{k}),\bb{k}\in \mathcal{A}\}$ describes the proportion of individuals 
of each type in the population at a given time, then the probability that, in the
next time step, the
number of individuals of type $\bb{j}$ increases by one and the number of individuals of type $\bb{i}\neq \bb{j}$ 
decreases by one is 
$$
f_N(z,\bb{i},\bb{j}):=\sum_{\bb{k},\bb{\ell}\in\mathcal{A}}z(\bb{i})z(\bb{k})w^{(N)}(z,\bb{i},\bb{k})q((\bb{i},\bb{k});\bb{\ell})\mu^{(N)}(\bb{\ell},\bb{j})
$$
where 
$$
w^{(N)}(z,\bb{i},\bb{k})=\frac{1+s^{(N)}_{\bb{i},\bb{k}}}{\sum_{\bb{\ell}\in\mathcal{A}}(1+s^{(N)}_{\bb{i},\bb{\ell}})z(\bb{\ell})}.
$$
\subsection{Assumptions on assortative mating, recombination and mutation}
\label{sec: assumptions}
In order to obtain a diffusion approximation for a large population, we assume that mutation and 
assortment parameters are both $O(N^{-1})$, so we set
\begin{hyp}\label{Hscaling} $\mu^{(N)}_\epsilon=\frac{\mu_\epsilon}{N}$ for 
$\epsilon\in\{0,1\}$ and $s^{(N)}_{\bb{i},\bb{j}}=\frac{s_{\bb{i},\bb{j}}}{N}$ for $\bb{i},\bb{j}\in\mc{A}$. 
\end{hyp}
Just as in the two-locus case studied by \cite{EthierNagylaki89}, we can expect 
diffusion approximations to exist under two quite different assumptions on 
recombination, corresponding to tight and loose linkage.  Here we focus
on loose linkage.
More precisely, we assume that the recombination distribution does not depend 
on the size of the population and that recombination can occur between any pair of loci:
\begin{hyp}\label{Hlinkage}
For every $I\in\mathcal{P}(\enu{n})$, $r_I$ does not depend on $N$ and for any distinct 
integers $h,k\in\enu{n}$, there exists a subset $I \in\mc{P}(\enu{n})$ such that $h\in I$,  
$k\not \in I$ and $r_I>0$. 
\end{hyp}

This assumption is satisfied for the last two examples of recombination distribution presented 
in Example~\ref{egrecomb}, but not in the absolute linkage case. 
In infinite population size multilocus models with random mating, and without selection, 
this condition is known to ensure that in time the genotype frequencies will converge to 
linkage equilibrium, where they are products of their 
respective marginal allelic frequencies (see \citealt{Geiringer} and 
\citealt{Nagylaki93}  for a study of 
the evolution of multilocus linkage disequilibria under weak selection). 

In order that the generator of the limiting diffusion has a tractable form, we shall make 
two further assumptions on the family of assortment coefficients
$s=\{s_{\bb{i},\bb{j}}, (\bb{i},\bb{j})\in\mc{A}^2\}$:

\begin{hyp}\label{Hpairing} for every $(\bb{i},\bb{j})\in\mc{A}^2$, 
\begin{enumerate}
 \item $s_{\bb{i},\bb{j}}=s_{\bb{j},\bb{i}}$
\item the value of $s_{\bb{i},\bb{j}}$ depends only on the set of 
loci $k$ at which $i_k=0$ and $j_k=1$ and on the set of loci $\ell$ at 
which $i_\ell=1$ and $j_\ell=0$. 
\end{enumerate}
\end{hyp} 
These conditions mean that the probability of mating between two individuals at a fixed time 
depends only on the difference between their types. 
In particular,  two individuals of the same type will have a probability of mating 
that does not depend on their common type: $s_{\bb{i},\bb{i}}=s_{\bb{j},\bb{j}}$ for 
every $\bb{i},\bb{j}\in\mc{A}$. 
In the one-locus case, this assumption means that the model distinguishes only two 
classes of pairs of individuals since $s_{0,1}=s_{1,0}$ and $s_{0,0}=s_{1,1}$.  \\
\newpage
In the two-locus case, this assumption leads to a model with five assortment parameters:
\begin{itemize}
 \item 
 one parameter, $s_{00,00}=s_{11,11}=s_{10,10}=s_{01,01}$, for pairs of individuals having the same genotype,
\item  one parameter $s_{00,10}=s_{10,00}=s_{11,01}=s_{01,11}$ for pairs of individuals whose genotypes only differ on the first locus,
\item one parameter, $s_{00,01}=s_{01,00}=s_{11,10}=s_{10,11}$, for pairs of individuals whose genotypes only differ on the second locus,  
\item two parameters $s_{01,10}=s_{10,01}$ and $s_{00,11}=s_{11,00}$ for pairs of individuals whose genotypes  differ on the two loci. 
\end{itemize}

To describe positive or negative  assortative mating we have to choose how to quantify  similarities between two types. Let us present two  criteria that provide  assortment parameters for which assumption \Href{Hpairing} is fulfilled:
\begin{enumerate}
\item {\bf Hamming Criterion.} One simple measure to quantify  similarities between 
two types is the number of loci with distinct alleles: $s_{\bb{i},\bb{j}}$ will be defined 
as nonnegative reals depending only on the Hamming distance between $\bb{i}$ and $\bb{j}$ 
denoted by ${\displaystyle d_h(\bb{i},\bb{j}):=\sum_{\ell=1}^{n}|i_{\ell}-j_{\ell}|}$. A positive assortative 
mating will be described by a sequence of $n+1$ nonnegative reals $s_0\geq s_1\geq \ldots \geq s_n$ 
by setting $s_{\bb{i},\bb{j}}=s_{d_{h}(\bb{i},\bb{j})}$ for every $\bb{i},\bb{j}\in\mc{A}$. 
This criterion will be called \emph{Hamming criterion}. 
\item {\bf Additive Criterion.}  If we assume that the assortment is based on a phenotypic 
trait which is determined by the $n$ genes whose effects are similar and additive, then a 
convenient measure of the difference between individuals of type  $\bb{i}$ and $\bb{j}$ 
is \linebreak ${\displaystyle d_a(\bb{i},\bb{j}):=|\sum_{\ell=1}^{n}(i_{\ell}-j_{\ell})|}$.  A positive assortative mating 
will be described by a sequence of $n+1$ nonnegative reals $s_0\geq s_1\geq \ldots \geq s_n$ 
by setting $s_{\bb{i},\bb{j}}=s_{d_{a}(\bb{i},\bb{j})}$ for every $\bb{i},\bb{j}\in\mc{A}$. 
This criterion will be called \emph{additive criterion}.
\end{enumerate}

The assortative mating in the species formation model of 
\cite{Higgs}
is a special case of the Hamming criterion.  The additive criterion is widely used in
models in which assortative mating is determined by an additive genetic trait.  For
example, \cite{Devaux} use it to investigate speciation in 
flowering plants due to assortative mating determined by flowering time.
Flowers can only be pollinated by other flowers that are open at the same time.  Modelling
flowering time as an additive trait, they observe an effect that is qualitatively similar 
to that observed in the simulations of \cite{Higgs}  for the Hamming criterion, 
namely continuous creation of reproductively isolated subgroups. 

With the Hamming and additive
criteria, every locus is assumed to have an identical positive or negative 
influence  on the assortment.  As we have defined a general family of 
assortment parameters, it is 
possible to consider more complex situations.  For instance, we can take into account that some 
loci have a greater influence on the mating choice than others by dividing the set of loci into 
two disjoint subgroups $G_1$ and $G_{2}$; we introduce two sets of 
assortment parameters $s^{(1)}$ 
and $s^{(2)}$ that satisfy assumption \Href{Hpairing} for the subgroups of loci $G_1$ and $G_2$ 
respectively. If we assume that the effects of the two subgroups are additive we set 
${s_{\bb{i},\bb{j}}=s^{(1)}_{\bb{i}_{|G_1},\bb{j}_{|G_1}}+s^{(2)}_{\bb{i}_{|G_2},\bb{j}_{|G_2}}}$ 
for every $\bb{i},\bb{j}\in\mc{A}$. 
This defines a set of assortment parameters that satisfies assumption \Href{Hpairing}. More generally, 
any set of assortment parameters defined as a function of $s^{(1)}$ and $s^{(2)}$  
satisfies assumption \Href{Hpairing}.
\section{The one-locus diffusion approximation\label{sectonelocus}}
Before studying the multilocus case, for later comparison, in this section we record
some properties of the one-locus model.
\subsection{The generator of the one-locus diffusion}
In the case of one locus ($n=1$), under assumption \Href{Hscaling}, the frequency of $0$-alleles 
satisfies:

\begin{alignat*}{2}
 \EE_z[Z^{(N)}_1(0)-z] = &\ \frac{1}{N^2}\Big((1-z)\mu_1-z\mu_0\\ 
&\hspace{1cm} +\frac{1}{2}z(1-z)((s_{1,0}-s_{1,1})(1-z)-(s_{0,1}-s_{0,0})z)\Big)+O(1/N^3)\\
\EE_z[(Z^{(N)}_1(0)-z)^2] = &\ \frac{1}{N^2}z(1-z)+O(1/N^3)\\
\EE_z[(Z^{(N)}_1(0)-z)^4] = &\ O(1/N^4) 
\end{alignat*}
uniformly in $z$.\\
Therefore  the distribution of the frequency of $0$-alleles at time $[N^2t]$ is approximately 
governed, when $N$ is large, by a diffusion with generator:  
\begin{multline}\label{onelocusgenerator}
\mathcal{G}_{1,s}=\frac{1}{2}x(1-x)\frac{d^2}{dx^{2}}+\Big((1-x)\mu_1-x\mu_0\\+1/2x(1-x)((s_{1,0}-s_{1,1})(1-x)-(s_{0,1}-s_{0,0})x)\Big)\frac{d}{dx}.
\end{multline}
More precisely,  if $Z^{(N)}_0$ converges in distribution in $[0,1]$ as $N$ tends to $+\infty$, then $(Z^{(N)}_{[N^2t]})_{t\geq 0}$ converges in distribution  in the Skorohod space of c\`{a}dl\`{a}g functions $D_{[0,1]}([0,+\infty))$ to a diffusion with generator $\mathcal{G}_{1,s}$ (see, for example, \citealt{EthierKurtz}, Chapter 10).\\

If we assume that $s$ satisfies assumption \Href{Hpairing}, that is $s_{0,0}=s_{1,1}$ 
and $s_{0,1}=s_{1,0}$, and if we denote their common values by $s_0$ and $s_{1}$ respectively, 
then the drift has a simpler form and we obtain
$$ 
\mathcal{G}_{1,s}
=\frac{1}{2}x(1-x)\frac{d^2}{dx^{2}}+\Big((1-x)\mu_1-x\mu_0+(s_1-s_0)(1/2-x)x(1-x)\Big)\frac{d}{dx}.
$$
\begin{rem}\label{rem:WFonelocus}
This diffusion can also be obtained as an approximation of a diploid  model with random mating, 
mutation and weak selection in favour of homozygosity\footnote{A diploid individual is homozygous at a gene locus when its cells contain two identical alleles at the locus.} (when $s_0-s_1 > 0$) or in favour of 
heterozygosity (when $s_0-s_1 < 0$) (see, for example, \citealt{EthierKurtz}, Chapter 10). 
\end{rem}\medskip

\subsection{Properties of the one-locus diffusion}
\paragraph{Stationary measure.}
If $\mu_0$ and $\mu_1$ are strictly positive, this diffusion has a reversible stationary
measure. Its density with respect to Lebesgue measure on $[0,1]$ is given by Wright's formula:
$$g_{\mu,s}(x)=C_{\mu,s}x^{2\mu_1-1}(1-x)^{2\mu_0-1}\exp\Big(-1/2((s_{1,0}-s_{1,1})(1-x)^2+(s_{0,1}-s_{0,0})x^2)\Big)$$
 where the constant $C_{\mu,s}$ is chosen so 
that  ${\displaystyle \int_{0}^{1}g_{\mu,s}(x)dx=1}$.  This is plotted, for various parameter values, in 
Fig.~\ref{loiinv1} under the assumptions $\mu_1=\mu_0=\mu$, $s_{0,0}=s_{1,1}=s_0$ and 
$s_{0,1}=s_{1,0}=s_1$.
\begin{figure}[htb]
\centering\includegraphics[scale=0.47]{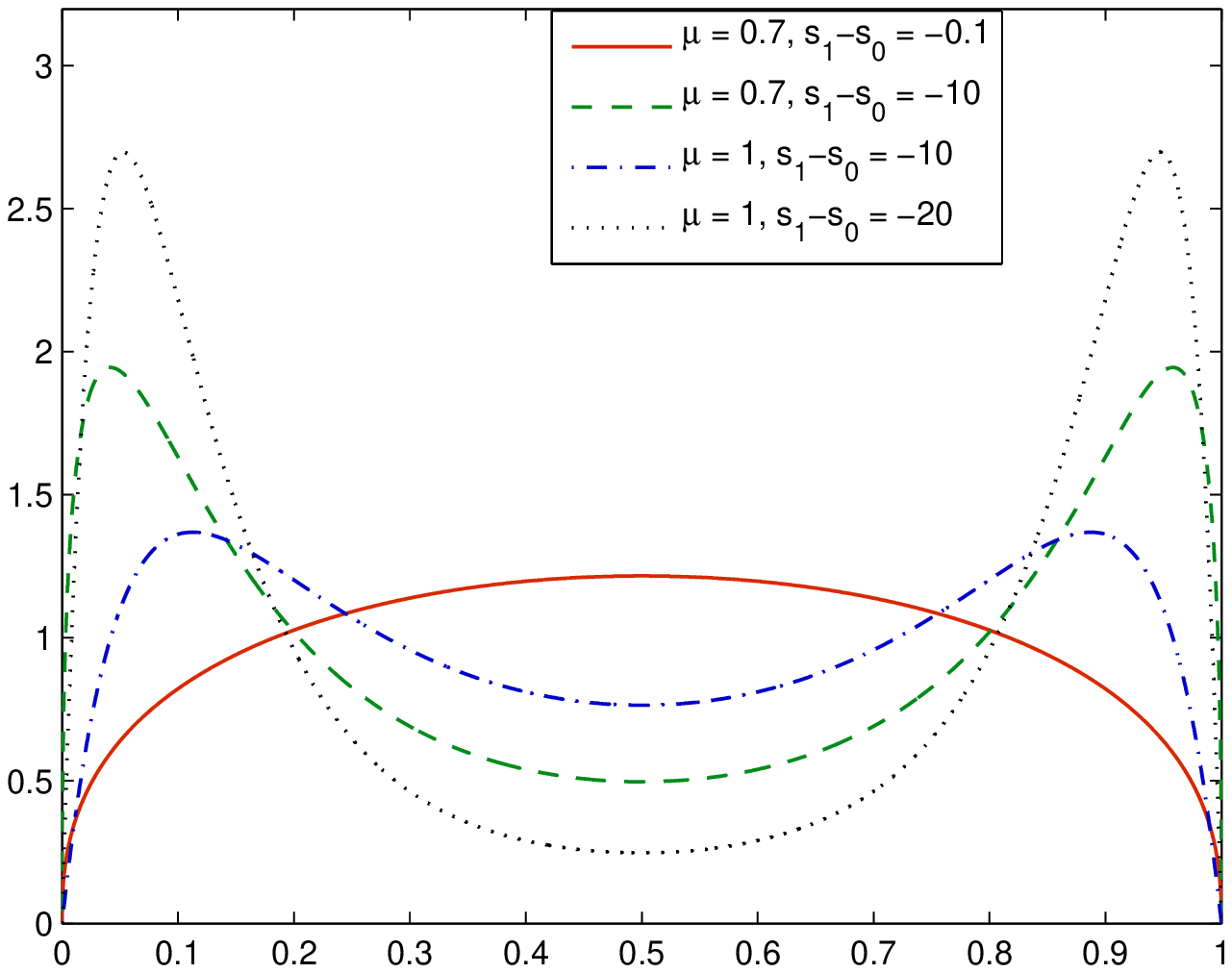}\includegraphics[scale=0.47]{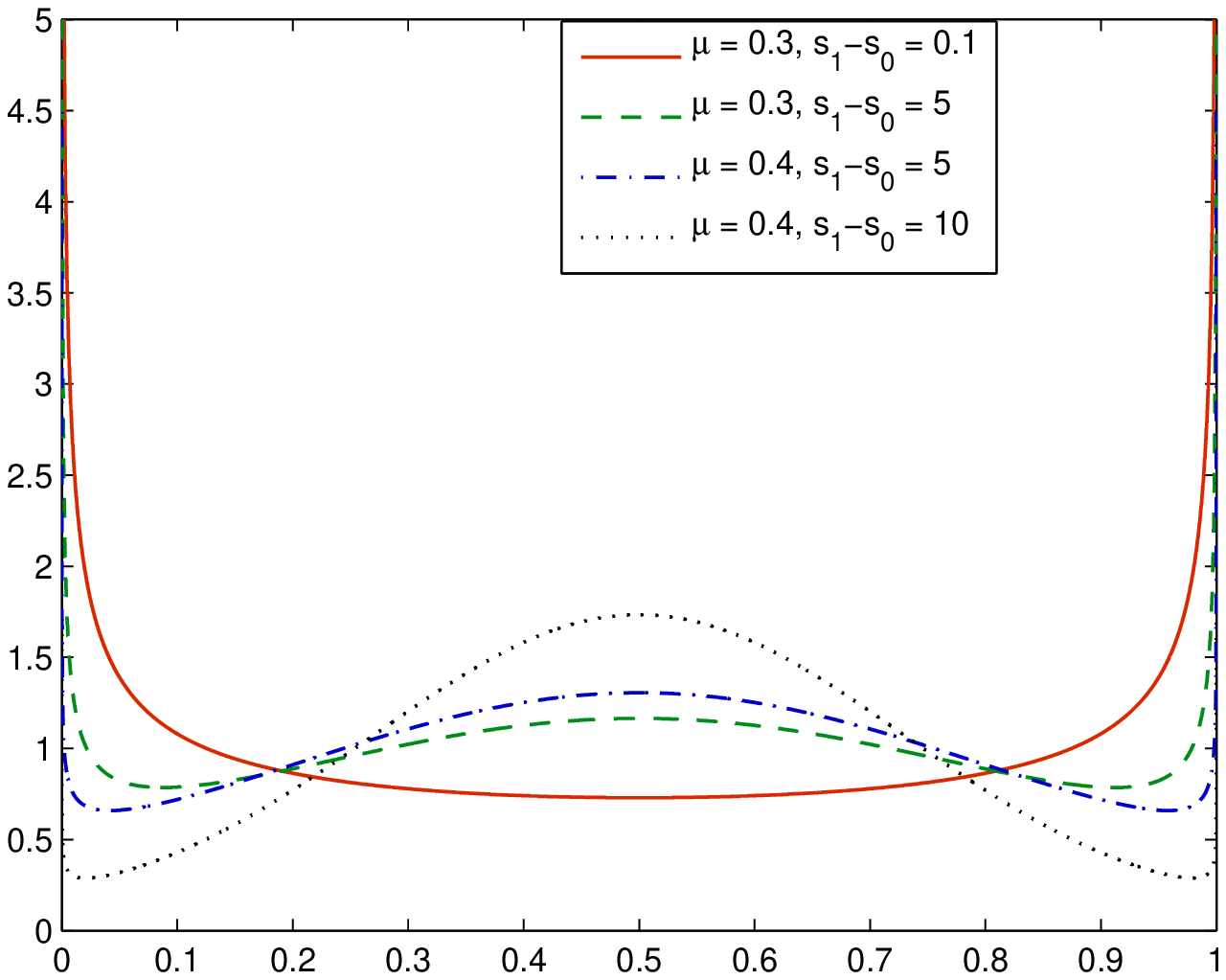}
\caption{Representation of the invariant density $g_{\mu,s}$ for the one-locus diffusion when 
the two mutations rates $\mu_1=\mu_0=\mu$, $s_{0,0}=s_{1,1}=s_0$ and 
$s_{0,1}=s_{1,0}=s_1$.
In the figure on the left, $\mu > 1/2$ and matings between individuals of the same allelic type are
favoured.  The density is bimodal if and only if $s_0-s_1 > 8\mu-4$.  In the figure on 
the right, $0<\mu<1/2$ and matings between individuals of different allelic types are favoured.
The density tends to $+\infty$ at the boundaries and has a global minimum at $1/2$ if and only 
if $s_1-s_0 \leq 4-8\mu$. \label{loiinv1}  }
\end{figure}

\paragraph{Boundary behaviour.}
According to the Feller boundary classification for one-dimensional diffusions 
(see e.g. \citealt{EthierKurtz}):
\begin{itemize}
\item[(i)] if $\mu_1=0$  then $0$  is an absorbing state 
and the diffusion exits from $]0,1[$ in a finite time almost surely; 
 \item[(ii)] if $\mu_1\geq 1/2$ then $0$ is an entrance boundary (started from a point in
$]0,1[$ the diffusion will not reach $0$ in finite time, but the process started from $0$
is well-defined);
\item[(iii)] if $0<\mu_1<1/2$ then $0$ is a regular boundary (starting from a point 
$z_0\in ]0,1[$ the diffusion has a positive probability of reaching $0$ before any point 
$b\in ]z_0,1]$ in a finite time and the diffusion started from $0$ is well-defined);
\end{itemize}
with the obvious symmetric definitions at $1$.
\section{Convergence to a diffusion in the multilocus case\label{secconv}}
%
In the case of several loci,  under assumptions \Href{Hscaling} and \Href{Hlinkage},
a Taylor expansion shows that the drift 
$\EE[Z^{(N)}_{t+1}(\bb{i})-Z^{(N)}_{t}(\bb{i}) \mid Z^{(N)}_{t}=z]$ is of order $\frac{1}{N^2}$ only 
inside the set of product distributions on $\{0,1\}^n$. This set is often called  the \emph{Wright manifold} or the
\emph{linkage-equilibrium manifold} and denoted by $\mathcal{W}_n$ (a population is said to be in linkage equilibrium if the genotype distribution  $z$ is in $\mc{W}_n$, that is if 
$ z(\bb{i})=z_{1}(i_1)\cdots z_{n}(i_n)\  \forall \bb{i}=(i_1,\ldots,i_n)\in \mathcal{A}$ 
where $z_j(x)=\sum_{\bb{k}\in\{0,1\}^{n-1}}z(k_1,\ldots,k_{j-1},x,k_{j+1},\ldots,k_n)$ 
denotes the frequency of  individuals having the allele $x$ at the $j$-th locus). \\ 
Outside this manifold, the drift pushes the process towards the Wright manifold at an 
exponential speed. Therefore to extend the diffusion approximation to the $n$-locus case, we introduce a change of variables composed of the $n$  0-allelic frequencies  and of $2^n-n-1$  processes that measure the deviation from the linkage equilibrium.\\
For a nonempty subset $L$ of $\{1,\ldots,n\}$,
\begin{itemize}
 \item let $X^{(N)}_{t}(L)=\sum_{\bb{j}\in\mc{A},\ \bb{j}_{|L}\eq 0}Z^{(N)}_{t}(\bb{j})$ 
denote the  proportion of individuals having the allele $0$ on all loci in $L$ at time $t$;
\item let $Y^{(N)}_{t}(L)=\prod_{i\in L}X^{(N)}_{t}(\{i\})-X^{(N)}_{t}(L)$ for $|L|\geq 2$ 
describe the linkage disequilibrium between the loci in $L$ at time $t$.
(This is just one of many ways to measure the linkage disequilibrium, see for example \cite{Burger00}, 
Chapter V.4.2, for other measures.)
\end{itemize}
The vector  of 0-allelic frequencies at time $t$ is  $X^{(N)}_{t}:=\big(X^{(N)}_{t}(\{1\}),...,X^{(N)}_{t}(\{n\})\big).$\\
The process $Y^{(N)}$ defined by $Y^{(N)}_t:=\big\{Y^{(N)}_t(L),\ L\subset\enu{n} \text{ such that } |L|\geq 2\big\} \text{ for } t\geq 0$  vanishes on the Wright manifold.\\
We shall show that if $t_N$ tends to $+\infty$ faster than $N$ then $Y^{(N)}_{[t_N]}$ converges to 0 
while if time is sped up by  $N^2$ then $X^{(N)}$ converges to a diffusion as $N$ tends to $+\infty$.  \\
Before giving a precise statement of the convergence result for the two processes $X^{(N)}$ and $Y^{(N)}$ (Theorem \ref{thgen}), 
let us introduce some notation in which to express the parameters of the limiting diffusion.
\subsection{Mean assortment parameters}
For a subset $L$ of loci, consider the set of pairs of genotypes that differ at each locus 
$\ell\in L$ and are equal at each locus $\ell\notin L$: 
$$F_L=\left\{(\bb{i},\bb{j})\in\mc{A}^2:\ i_{u}=1-j_u\ 
\forall u\in L \text{ and } i_u=j_u\ \forall u\in \bar{L}\right\}.$$ 
Let $m_L(s)$ denote the mean value of the assortment parameters for all pairs in this set $F_L$: 
$$m_L(s)=2^{-n}\sum_{(\bb{i},\bb{j})\in F_L}s_{\bb{i},\bb{j}}.$$
\newpage
\begin{exs}\
\label{exmeancoef}
\begin{enumerate}
\item In the two-locus case, 
\begin{eqnarray*}m_{\emptyset}(s)&=&\frac{1}{4}(s_{00,00}+s_{01,01}+s_{10,10}+s_{11,11}),\\
 m_{\{1\}}(s)&=&\frac{1}{4}(s_{00,10}+s_{10,00}+s_{01,11}+s_{11,01}),
\end{eqnarray*}
\begin{eqnarray*}
m_{\{2\}}(s)&=&\frac{1}{4}(s_{00,01}+s_{01,00}+s_{11,10}+s_{10,11}). 
\end{eqnarray*}
In each of these expressions the four coefficients  are equal by assumption \Href{Hpairing}.
$$m_{\{1,2\}}(s)=\frac{1}{4}(s_{00,11}+s_{11,00}+s_{01,10}+s_{10,01}).$$ In this expression the first (resp. last) two coefficients are equal by \Href{Hpairing}.  
\item  With the Hamming criterion, $m_L(s)=s_{|L|}$ for every $L\subset \enu{n}$, where $|L|$ denotes the number of loci in $L$. 
\item With the additive criterion, $m_{\emptyset}(s)=s_0$, $m_{\{\ell\}}(s)=s_1$ $\forall \ell\in\enu{n}$ and more generally $m_{L}(s)=2^{-|L|}\sum_{k=0}^{|L|}\binom{|L|}{k}s_{|2k-|L||}$ for every $L\subset \enu{n}$. 
\end{enumerate}
\end{exs}
\subsection{Convergence to a diffusion}
The following theorem provides convergence results for the two processes $X^{(N)}$ and $Y^{(N)}$ as the population size $N$ tends to $+\infty$. The proof, based on Theorem~3.3 of  \cite{EthierNagylaki80}, is postponed to \Sref{secconvproof}.
\begin{thm}\label{thgen}
Assume that hypotheses \Href{Hrecombsym}, \Href{Hscaling}, \Href{Hlinkage} and   \Href{Hpairing} hold.  
\begin{itemize}
\item[(a)] For $i\in\enu{n}$ let $P_{i,s}(x)$ denote a polynomial function in the $n-1$ 
variables $x_{k}(1-x_{k})$ for $k\in\enu{n}\setminus\{i\}$. 
Then, the operator \begin{multline}
\label{expresGn}
\mathcal{G}_{n,s}=\frac{1}{2}\sum_{i=1}^{n}x_i(1-x_i)\frac{\partial^{2}}{\partial_{x_i}\partial_{x_i}}\\+\sum_{i=1}^{n}\Big((1-x_i)\mu_1-x_i\mu_0+(1/2-x_i)x_i(1-x_i)P_{i,s}(x)\Big)\frac{\partial}{\partial_{x_i}}
\end{multline}
with domain $\mathcal{D}(\mathcal{G}_{n,s})=C^2([0,1]^n)$ is closable in $C([0,1]^n)$ and its closure is the generator 
of a strongly continuous semigroup of contractions.
\item[(b)]
If $X^{(N)}_{0}$ converges in distribution in $[0,1]^n$, then $(X^{(N)}_{[N^2t]})_t$ converges in 
distribution in the Skorohod space of c\`adl\`ag functions $D_{[0,1]^n}([0,\infty))$ to a diffusion process $X$  with generator $\mathcal{G}_{n,s}$ 
where the polynomial function $P_{i,s}(x)$ has the following expression: 
\begin{multline}
\label{expresP}
  P_{i,s}(x)=\sum_{A\in\mc{P}(\enu{n}\setminus\{i\})}\big(m_{A\cup\{i\}}(s)-m_{A}(s)\big)\\
\prod_{k\in A}(2x_k(1-x_k))\prod_{\ell\in \enu{n}\setminus\{A\cup \{i\}\}}(1-2x_\ell(1-x_\ell)).
\end{multline}
\item[(c)] For every positive sequence $(t_N)_N$ that converges to $+\infty$, $Y^{(N)}_{[Nt_N]}$ 
converges in distribution to 0. 
\end{itemize}
\end{thm}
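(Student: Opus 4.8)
The plan is to recast the statement as a two-timescale diffusion approximation, with the linkage disequilibria $Y^{(N)}$ playing the role of the fast variables and the allelic frequencies $X^{(N)}$ the slow ones, and to invoke Theorem~3.3 of \cite{EthierNagylaki80}. Part~(a) must be settled separately. To show that the closure of $\mathcal{G}_{n,s}$ generates a strongly continuous contraction semigroup, I would first verify that $\mathcal{G}_{n,s}$ satisfies the positive maximum principle on $C^2([0,1]^n)$: at an interior maximum the second-order term is nonpositive and the drift term vanishes, while on a face $x_i=0$ the diffusion coefficient $\tfrac12 x_i(1-x_i)$ degenerates in the normal direction and the inward-pointing mutation drift $(1-x_i)\mu_1-x_i\mu_0=\mu_1\geq 0$ meets a nonpositive normal derivative, so no violation occurs (symmetrically at $x_i=1$). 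I would then check the range condition $\overline{(\lambda-\mathcal{G}_{n,s})C^2([0,1]^n)}=C([0,1]^n)$ for some $\lambda>0$ by writing $\mathcal{G}_{n,s}=\mathcal{G}_{\mathrm{prod}}+B$, where $\mathcal{G}_{\mathrm{prod}}=\sum_i\mathcal{G}^{(i)}$ is the sum of one-dimensional Wright--Fisher-with-mutation generators (whose generation of a Feller semigroup is classical, \cite{EthierKurtz}, Chapter~10) and $B=\sum_i(\tfrac12-x_i)x_i(1-x_i)P_{i,s}(x)\partial_{x_i}$ is a first-order operator whose coefficients vanish at the boundary like $x_i(1-x_i)$. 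A perturbation argument then lets me conclude via the Hille--Yosida--Ray theorem (\cite{EthierKurtz}, Theorem~4.2.2).

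For parts~(b) and~(c), the first step is to Taylor-expand the transition probabilities $f_N$ under \Href{Hscaling} and compute, uniformly in the current state $z$, the one-step conditional first and second moments of the slow vector $X^{(N)}$ and the fast vector $Y^{(N)}$, together with a fourth-moment bound of order $N^{-4}$ that rules out jumps in the limit. The key structural facts I would extract are: (i) recombination preserves the single-locus marginals (using \Href{Hrecombsym}, each parent transmits locus $i$ with probability $\tfrac12$), so the drift of each $X^{(N)}(\{i\})$ comes only from mutation and assortment and is of order $N^{-2}$, with leading coefficient a smooth function of the full genotype distribution; (ii) the diffusion coefficient of $X^{(N)}(\{i\})$ is $\tfrac{1}{N^2}x_i(1-x_i)+O(N^{-3})$ with negligible off-diagonal terms; and (iii) under \Href{Hlinkage} the recombination part of the drift of $Y^{(N)}$ has the form $-cY^{(N)}+(\text{lower order})$ with a strictly positive relaxation constant $c$ for each pair of loci, so that $Y^{(N)}$ is pushed back to the Wright manifold $\mathcal{W}_n$ on the $O(1)$ per-step timescale. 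This last point gives $Y^{(N)}_{[Nt_N]}\to 0$, which is part~(c).

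The analytic heart of part~(b) is the identification of the limiting drift. After the time change by $N^2$, the assortment contribution to the drift of $X^{(N)}(\{i\})$ has an $O(1)$ coefficient that is a smooth function of the full genotype distribution; since $Y^{(N)}$ relaxes to $0$ on the much shorter timescale $N$, the averaging in the two-timescale theorem allows this coefficient to be evaluated on $\mathcal{W}_n$, i.e. at the product distribution parametrised by the marginals $x_1,\dots,x_n$. Computing the assortment drift on $\mathcal{W}_n$ is a combinatorial exercise: summing over ordered pairs of independently drawn genotypes, reducing the assortment parameters to the mean coefficients $m_L(s)$ via \Href{Hpairing}, and grouping the sum according to the set $A$ of loci other than $i$ at which the two mating genotypes differ. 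The probability under the product distribution that two independent genotypes differ exactly on $A$ and agree off $A\cup\{i\}$ contributes the factor $\prod_{k\in A}(2x_k(1-x_k))\prod_{\ell\notin A\cup\{i\}}(1-2x_\ell(1-x_\ell))$, while the net weight of such pairs is $m_{A\cup\{i\}}(s)-m_A(s)$; together these reproduce the polynomial $P_{i,s}(x)$ of \eqref{expresP}. I expect this combinatorial reduction, and the verification that the slow drift depends continuously enough on $Y^{(N)}$ for the averaging step of Theorem~3.3 of \cite{EthierNagylaki80} to apply, to be the main difficulty; the remaining moment conditions and the convergence of the slow generator to $\mathcal{G}_{n,s}$ are then routine, and the cited theorem yields the joint convergence of $(X^{(N)}_{[N^2t]})_t$ to the diffusion with generator $\mathcal{G}_{n,s}$ and of $Y^{(N)}$ to $0$.
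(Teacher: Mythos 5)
For parts (b) and (c) your proposal follows essentially the same route as the paper: the two-timescale theorem of Ethier and Nagylaki with $\epsilon_N=N^{-2}$, $\delta_N=N^{-1}$, the observation that \Href{Hrecombsym} kills the $O(N^{-1})$ recombination contribution to the drift of the marginals, the exponential relaxation of the linkage disequilibria under \Href{Hlinkage} (the paper makes your ``$-cY+\text{lower order}$'' precise by an induction on $|I|$ exploiting the triangular structure of the ODE system for $v_{n,I}$), and the evaluation of the assortment drift on the Wright manifold. Your combinatorial identification of $P_{i,s}$ --- grouping independent pairs of genotypes by the set $A$ of loci other than $i$ at which they differ, with weight $m_{A\cup\{i\}}(s)-m_A(s)$ and probability factor $\prod_{k\in A}2x_k(1-x_k)\prod_{\ell\notin A\cup\{i\}}(1-2x_\ell(1-x_\ell))$ --- is exactly the content of the paper's computation of $G_u(z)$ followed by Lemma~\ref{lemsimppoly} and the difference-operator identities.

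The one genuine gap is in part (a). The positive maximum principle gives dissipativity, but your range condition rests on the claim that ``a perturbation argument'' handles $B=\sum_i(\tfrac12-x_i)x_i(1-x_i)P_{i,s}(x)\partial_{x_i}$. This $B$ is an unbounded first-order operator on $C([0,1]^n)$, and the standard perturbation theorems require either boundedness or a relative bound strictly less than one; establishing the latter in the sup norm for a first-order perturbation of a diffusion whose coefficients degenerate on every face of the cube is precisely the hard point, and it is not routine in dimension $n\ge 2$ (the polynomial-core trick for neutral Wright--Fisher does not survive the degree-raising drift). The paper avoids this entirely by invoking the theorem of Cerrai and Cl\'ement, whose hypotheses (diagonal diffusion matrix with $A_{ij}$ depending only on $x_i,x_j$, vanishing normal component on each face, inward-pointing drift) are tailor-made for $\mathcal{G}_{n,s}$; you should either cite such a result or supply the missing relative-boundedness estimate.
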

\begin{rem}\
\begin{enumerate}
\item The recombination distribution $(r_{I})_{I\subset\enu{n}}$ does not appear in the 
expression for the limiting diffusion.  Nevertheless, the proof of Theorem~\ref{thgen} will 
show that it has an influence on the speed of convergence of the linkage disequilibrium to $0$. 
\item 
The limiting diffusion depends on the assortment parameters only via the quantities $m_A(s)$ for every  $A\subset \enu{n}$. 
A set of assortment parameters for which $${m_{A\cup \{i\}}(s)-m_A(s)<0} \text{ for 
every }i\in\enu{n}\text{ and }A\subset \enu{n}\setminus \{i\}$$ favours homozygous mating with respect to the genotype 
at the $i$-th locus.  It is therefore no surprise that by increasing the value of 
$m_{A\cup \{i\}}(s)-m_A(s)$ for a fixed subset $A$, we increase the value of the $i$-th 
coordinate of the drift at a point $x$ for which $x_i<1/2$ and decrease it at a point 
$x$ for which $x_i>1/2$. 
\end{enumerate}
\end{rem}
\subsection{Another expression for the polynomial term $P_{i,s}(x)$ of the drift}
An expansion of the polynomial function $P_{i,s}(x)$ in terms of the variables 
$x_{k}(1-x_{k})$, ${k\in\enu{n}\setminus \{i\}}$ yields the following expression: 
\begin{equation}\label{Pgeneral}
P_{i,s}(x)=\sum_{L\in\mathcal{P}(\enu{n}\setminus\{i\})}\alpha_{i,L}(s)\prod_{\ell \in L}x_{\ell}(1-x_\ell)
\end{equation}
with 
\begin{equation}
\label{alphageneral}
\alpha_{i,L}(s)=2^{|L|}\sum_{A\subset L}(-1)^{|L|-|A|}(m_{A\cup\{i\}}(s)-m_{A}(s)).
\end{equation}
The details of the proof are provided in \Sref{sectproofdrift}.

The coefficients $\alpha_{i,L}(s)$ can be compactly expressed using difference operators. 
Let us introduce some notation: for a function $f$ defined on the subsets of a finite set $E$ 
and for an element $i$ of $E$, we denote by $\delta_{i}[f]$ the function on ${\mathcal P}(E)$
defined by 
$$\delta_{i}[f](A)=f(A\cup \{i\})-f(A),\ \forall A\in\mathcal{P}(E).$$ 
Since $\delta_{i}\circ \delta_{j}=\delta_{j}\circ \delta_{i}$ for every $i,j\in E$, 
we can, more generally, introduce a difference operator $\delta_{B}$ for each subset 
$B\in \mathcal{P}(E)$ by setting $\delta_{\emptyset}=Id$, and 
$\delta_B=\delta_{b_1}\circ \cdots \circ\delta_{b_r}$ if $B=\{b_1,\ldots,b_r\}$. 
A proof by induction on $|B|$ provides the following formula for $\delta_{B}$:
\begin{equation}
\delta_{B}[f](A)=\sum_{J\subset B}(-1)^{|B|-|J|}f(A\cup J)\quad \forall A\subset E\label{expdiffset}.
\end{equation}
Let $m(s)$ denote the function $A\mapsto m_A(s)$ defined on the subsets of $\enu{n}$. In this notation, for every $A\subset \enu{n}\setminus \{i\}$, 
\begin{equation} m_{A\cup \{i\}}(s)-m_{A}(s)=\delta_{i}[m(s)](A)\text{ and }  \alpha_{i,A}(s)=2^{|A|}\delta_{A\cup \{i\}}[m(s)](\emptyset). 
 \label{expalpdiffset}
\end{equation}

If, for each subset $A$ of loci, the coefficient $m_{A}(s)$ depends only on  
the number of loci in $A$, then it follows from expression \eqref{Pgeneral} that $P_{i,s}(x)$ 
is a symmetric polynomial function, the coefficients of which do not depend on $i$.  
This is the case for instance with the Hamming and additive criteria (see Example~\ref{exmeancoef} 
for the corresponding expressions for $m_{A}(s)$). 
Let us give the expanded form of $P_{i,s}(x)$ for the Hamming criterion:
\begin{equation}\label{PHamming}
P_{i,s}(x)=\sum_{\ell=0}^{n-1}\tilde{\alpha}_{\ell}\sum_{L\subset \enu{n}\setminus\{i\},\  |L|=\ell\ }\prod_{\ell \in L}x_{\ell}(1-x_\ell).
\end{equation}
where $\tilde{\alpha}_{k}(s)=2^{k}\sum_{\ell=0}^{k}(-1)^\ell\binom{k}{\ell}(s_{k-\ell+1}-s_{k-\ell})$.\\
As in the general case, the coefficient $\tilde{\alpha}_{k}(s)$ has a compact expression 
in terms of difference operators. 
Let  $\delta^{(1)}$ denote the forward difference operators: $\delta^{(1)}[s](i)=s_{i+1}-s_{i}$ 
for every ${i\in\enum{0}{n-1}}$.  The forward difference operators of higher orders are 
defined iteratively:  $\delta^{(k+1)}[s]=\delta^{(k)}\circ \delta^{(1)}[s]$ for $k\in\NN^*$. 
With this notation, $\tilde{\alpha}_{k}(s)=2^{k}\delta^{(k+1)}[s](0)$  for $k\in \enum{0}{n-1}$.
\subsection{Comparison with the two-locus Wright-Fisher diffusion \label{secttwolocusWF}}
\cite{EthierNagylaki89} established convergence results for a general 
multiallelic two-locus Wright-Fisher model of a panmictic, monoecious, diploid population of 
$N$ individuals (identified with $2N$ haploids) undergoing mutation and selection.  In their model, a gamete is described by a pair 
$\bb{i}=(i_1,i_2)\in\enu{r_1}\times \enu{r_2}$ where $r_1$ is the number of alleles 
in the first locus and $r_2$ is the number of alleles in the second locus. 
The parameters of their 
model are:
\begin{enumerate}
 \item the viability of a pair of gametes $(\bb{i},\bb{j})$ denoted by $w_{N,\bb{i},\bb{j}}=1-\sigma_{N,\bb{i},\bb{j}}$ with the assumption $\sigma_{N,\bb{i},\bb{j}}=\sigma_{N,\bb{j},\bb{i}}$ and $\sigma_{N,\bb{i},\bb{i}}=0$ for every $\bb{i},\bb{j}\in\enu{r_1}\times \enu{r_2}$ (after viability selection the proportion  of a pair of gametes $(\bb{i},\bb{j})$ is assumed to be  ${\displaystyle P^{*}_{\bb{i},\bb{j}}=\dfrac{w_{N,\bb{i},\bb{j}}P_{\bb{i}}P_{\bb{j}}}{\sum_{\bb{k},\bb{\ell}}w_{N,\bb{k},\bb{\ell}}P_{\bb{k}}P_{\bb{\ell}}}}$ if $P_{\bb{k}}$ denotes the frequency of gametes $\bb{k}$ in the population $\forall \bb{k}\in\enu{r_1}\times \enu{r_2}$);
\item the recombination fraction $c_N$;
\item the probability $(2N)^{-1}\nu^{(i)}_{j,k}$ that the $j$-th allele in the $i$-th locus mutates to the $k$-th allele.
\end{enumerate}
The population at the  generation $t+1$ is obtained by choosing $2N$ gametes uniformly at random with replacement from the pool of gametes of the generation  $t$ after the steps of viability selection, recombination and mutation.

They studied the diffusion approximation under several assumptions on selection and recombination 
coefficients.  In the case of weak selection ($2N\sigma_{N,\bb{i},\bb{j}}$ converges to a 
real number
denoted by $\sigma_{\bb{i},\bb{j}}$ for every $\bb{i},\bb{j}$) and loose linkage ($c_N$ converges 
to a finite limit and $Nc_N$ tends to $+\infty$) they obtained a limiting diffusion for the allelic 
frequencies $(p_1,\ldots,p_{r_1-1},q_1,\ldots,q_{r_2-1})$ of the alleles $1,\ldots,r_1-1$ in the 
first locus and the alleles $1,\ldots,r_2-1$ in the second locus. 
In the case of two alleles at each locus ($r_1=r_2=2$), the generator of the limiting diffusion is
$$\mathcal{L}=\frac{1}{2}p_1(1-p_1)\partial^{2}_{p_1,p_1}+\frac{1}{2}q_1(1-q_1)\partial^{2}_{q_1,q_1}+b_1(p_1,q_1)\partial_{p_1} +b_2(p_1,q_1)\partial_{q_1}$$ with 
 \begin{alignat*}{2}b_1(p_1,q_1)= & \nu^{(1)}_{2,1}(1-p_1)-\nu^{(1)}_{1,2}p_1\\  &-p_1(1-p_1)(1-2p_1)\Big((\sigma_{12,21}+\sigma_{11,22})q_1(1-q_1)+\sigma_{11,21}q_{1}^2+\sigma_{12,22}(1-q_{1})^2\Big)\\
&-2p_1(1-p_1)q_1(1-q_1)\Big(\sigma_{11,12}p_1-\sigma_{21,22}(1-p_1)\Big).\\
 b_2(p_1,q_1)=  & \nu^{(2)}_{2,1}(1-q_1)-\nu^{(2)}_{1,2}q_1\\&-q_1(1-q_1)(1-2q_1)\Big((\sigma_{12,21}+\sigma_{11,22})p_1(1-p_1)+\sigma_{11,12}p_{1}^2+\sigma_{21,22}(1-p_1)^2\Big)\\
&-2q_1(1-q_1)p_1(1-p_1)\Big(\sigma_{11,21}q_1-\sigma_{12,22}(1-q_1)\Big).
\end{alignat*}
Accordingly, the generator $\mathcal{L}$ coincides with  $\mathcal{G}_{2,s}$ if we assume
\begin{itemize}
 \item[(a)] 
that the mutation rates $\nu^{(i)}_{j,k}$ do not depend on the locus $i$ and set $\nu^{(i)}_{1,2}=\mu_{0}$ and $\nu^{(i)}_{2,1}=\mu_{1}$,
\item[(b)] that the coefficients of selection satisfy $\sigma_{11,21}=\sigma_{12,22}$ 
and $\sigma_{11,12}=\sigma_{21,22}$ (second condition of assumption \Href{Hpairing})
\end{itemize} 
and set $\sigma_{\bb{i},\bb{j}}=-\frac{1}{2}s_{\bb{i}-\bb{1},\bb{j}-\bb{1}}$, for every $\bb{i},\bb{j}\in\{1,2\}^{2}$ (with the notation $\bb{1}=(1,\ldots,1)$).\\
This comparison suggests that the effect of  assortative mating  on the genotype evolution of a large population in our model is similar to the effect of weak viability selection  in a diploid Wright-Fisher model with mutation. 
%
\section{Description of the limiting diffusion\label{sectdescrlim}}
This section collects some properties that can be deduced from the form of the generator,
$\mc{G}_{n,s}$, of the limiting diffusion. 
\subsection{The set of generators arising from the model}
\begin{lem}
Any generator on $C^{2}([0,1]^n)$ of the form 
\begin{multline*}\frac{1}{2}\sum_{i=1}^{n}x_i(1-x_i)\frac{\partial^{2}}{\partial_{x_i}\partial_{x_i}}+\\\sum_{i=1}^{n}\Big((1-x_i)\mu_1-x_i\mu_0+(1/2-x_i)x_i(1-x_i)\sum_{L\in\mc{P}(\enu{n}\setminus\{i\})}\alpha_{L\cup\{i\}}\prod_{k\in L}x_{k}(1-x_k)\Big)\frac{\partial}{\partial_{x_i}},
\end{multline*}
where $\{\alpha_A,\ A\subset \enu{n},\ A\neq \emptyset\}$ is a family of real numbers,
can be interpreted as the generator of the diffusion approximation of an $n$-locus Moran model 
as defined in \Sref{sectmodel}.  
\end{lem}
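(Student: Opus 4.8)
The plan is to read the statement as a converse to Theorem~\ref{thgen}(b): given an arbitrary family $\{\alpha_A,\ \emptyset\neq A\subset\enu{n}\}$, I must produce recombination, mutation and assortment parameters meeting the hypotheses of that theorem whose limiting generator is the one displayed. Recall from \eqref{Pgeneral} and \eqref{expalpdiffset} that for a family $s$ satisfying \Href{Hpairing}, the coefficient that the limiting drift of the $i$-th coordinate attaches to $\prod_{\ell\in L}x_\ell(1-x_\ell)$ is $\alpha_{i,L}(s)=2^{|L|}\delta_{L\cup\{i\}}[m(s)](\emptyset)$, which depends on $i$ and $L$ only through the set $L\cup\{i\}$. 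Writing $A=L\cup\{i\}$, the generator $\mc{G}_{n,s}$ coincides with the one in the lemma precisely when
\[
\alpha_A=2^{|A|-1}\,\delta_A[m(s)](\emptyset)\qquad\text{for every nonempty }A\subset\enu{n}.
\]
So it suffices to realise, as mean assortment parameters $m_A(s)$, some function $m$ on $\mc{P}(\enu{n})$ whose difference-operator values at $\emptyset$ are the prescribed numbers $2^{1-|A|}\alpha_A$.

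First I would solve for $m$. Equation~\eqref{expdiffset} exhibits $f\mapsto(\delta_B[f](\emptyset))_B$ as the finite-difference (Möbius) transform on the Boolean lattice, which is invertible: setting $g(\emptyset)=0$ and $g(A)=2^{1-|A|}\alpha_A$ for $A\neq\emptyset$, the function $m_A:=\sum_{B\subset A}g(B)$ satisfies $\delta_A[m](\emptyset)=g(A)$ for every $A\subset\enu{n}$, by inversion of \eqref{expdiffset}. Hence $2^{|A|-1}\delta_A[m](\emptyset)=\alpha_A$ for all nonempty $A$, as required.

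It remains to exhibit a genuine, nonnegative family $s$ realising this $m$. Choose a constant $c$ with $m_D+c\geq 0$ for all $D\subset\enu{n}$ and set $s_{\bb{i},\bb{j}}=m_{D(\bb{i},\bb{j})}+c$, where $D(\bb{i},\bb{j})=\{\ell:\ i_\ell\neq j_\ell\}$ is the set of loci at which the two genotypes differ. These parameters are nonnegative and, depending only on $D(\bb{i},\bb{j})$, symmetric, so \Href{Hpairing} holds. A direct count over $F_L$ (each of the $2^{|L|}$ ordered splittings of $L$ contributes the single value $m_L+c$, weighted by the $2^{n-|L|}$ choices of the common alleles on $\bar L$) gives $m_L(s)=m_L+c$ for every $L$; since the additive constant cancels in $\delta_A[m(s)](\emptyset)$ for nonempty $A$, we obtain $2^{|A|-1}\delta_A[m(s)](\emptyset)=\alpha_A$. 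Finally, taking free recombination $r_I=2^{-n}$ (which satisfies \Href{Hrecombsym} and \Href{Hlinkage}) and the same mutation rates $\mu_0,\mu_1$, all hypotheses of Theorem~\ref{thgen} are met, and part~(b) identifies $\mc{G}_{n,s}$ with the operator in the lemma.

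I do not expect a serious obstacle; the only point requiring care is to satisfy all the model constraints simultaneously---in particular the sign constraint $s_{\bb{i},\bb{j}}\geq 0$---which is precisely why I build $s$ from the \emph{differences-only} ansatz $s_{\bb{i},\bb{j}}=m_{D(\bb{i},\bb{j})}+c$: it makes \Href{Hpairing} transparent, makes $m_L(s)$ trivial to compute, and lets the irrelevant additive constant absorb the positivity requirement while leaving every relevant difference $m_{A\cup\{i\}}(s)-m_A(s)$ untouched.
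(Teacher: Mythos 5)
Your construction is the paper's own: you take $m_A=\sum_{B\subset A,\,|B|\geq 1}2^{1-|B|}\alpha_B$ and verify $2^{|A|-1}\delta_A[m(s)](\emptyset)=\alpha_A$ by Möbius inversion, which is exactly the computation in the paper's proof (done there by interchanging the double sum). The only additions are cosmetic but welcome: you shift by a constant $c$ to meet the nonnegativity requirement on the assortment parameters (the paper's choice $s_{\bb{i},\bb{i}}=0$ can leave some $s_{\bb{i},\bb{j}}$ negative) and you explicitly name an admissible recombination distribution.
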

\begin{proof}
We may, for instance, take the following set of assortment parameters 
$\{s_{\bb{i},\bb{j}},\ \bb{i},\bb{j}\in\mc{A}\}$:
\begin{itemize}
 \item 
$s_{\bb{i},\bb{i}}=0$ for every $\bb{i}\in\mc{A}$.
\item $s_{\bb{i},\bb{j}}=\sum_{B\subset L,\ |B|\geq 1}2^{-|B|+1}\alpha_B$ for every $(\bb{i},\bb{j})\in F_L$ and for every  nonempty subset $L$ of $\enu{n}$.
\end{itemize}
Let us check that this family satisfies $2^{|L|-1}\delta_{L}[m(s)](\emptyset)=\alpha_{L}$ for 
every nonempty subset $L$ of $\enu{n}$. 
First, $m_L(s)=\sum_{B\subset L,\ |B|\geq 1}2^{-|B|+1}\alpha_B$. For every $i\in\enu{n}$ and $L\subset\enu{n}\setminus\{i\}$ 
$$\delta_{L\cup \{i\}}[m(s)](\emptyset)=\sum_{A\subset L}(-1)^{|L|-|A|}(m_{A\cup\{i\}}(s)-m_{A}(s))
=\sum_{A\subset L}(-1)^{|L|-|A|}\sum_{B\subset A}2^{-|B|}\alpha_{B\cup \{i\}}.
$$
We invert the double sum and use the formula ${\displaystyle \sum_{A\subset L,\st B\subset A}(-1)^{|L|-|A|}=\un_{\{L=B\}}}$ to obtain:
$$\delta_{L\cup \{i\}}[m(s)](\emptyset)=
\sum_{B\subset L}2^{-|B|}\alpha_{B\cup \{i\}}\un_{\{L=B\}}=2^{-|L|}\alpha_{L\cup \{i\}}.
$$
\end{proof}

In particular, the $n$-locus Moran model with assortative mating based on the Hamming criterion 
allows us to obtain, through diffusion approximation, any generator on $C^2([0,1]^n)$ of the form: 
\begin{multline*}\frac{1}{2}\sum_{i=1}^{n}x_i(1-x_i)\frac{\partial^{2}}{\partial_{x_i}\partial_{x_i}}+\\\sum_{i=1}^{n}\Big((1-x_i)\mu_1-x_i\mu_0+(1/2-x_i)x_i(1-x_i)\sum_{\ell=0}^{n-1}\alpha_{\ell}\sum_{\underset{\text{ s.t.}|L|=\ell}{L\subset\enu{n}\setminus\{i\}}}\prod_{k\in L}x_{k}(1-x_k)\Big)\frac{\partial}{\partial_{x_i}}.
\end{multline*}
To see this, given any sequence $\alpha_0,\ldots,\alpha_{n-1}$ of $n$ reals, we have to find $n+1$ 
real numbers $s_0,\ldots,s_{n}$ such that ${\alpha_{\ell}=2^{\ell}\delta^{(\ell+1)}[s](0)}$. 
These are given by the inversion formula \eqref{deltarel} in the Appendix, from which we see that we may
set $s_0=0$ and $s_k=\sum_{\ell=1}^{k}2^{1-\ell}\binom{k}{\ell}\alpha_{\ell-1}$ 
for $k\in\enu{n}$. 

\subsection{The generator for two groups of loci}
Let us consider a partition of the set of loci into two subgroups, 
$G_1=\enu{k}$ and \linebreak[4] ${G_2=\enum{k+1}{n}}$, say.  We introduce two sets of assortment 
parameters $s^{(1)}$ and $s^{(2)}$ depending on subgroups of loci from $G_1$ and 
from $G_2$ respectively and satisfying 
assumption \Href{Hpairing}.  If we assume that the assortment parameter between two individuals 
of type $\bb{i}$ and $\bb{j}$ is 
${s_{\bb{i},\bb{j}}=s^{(1)}_{\bb{i}_{|G_1},\bb{j}_{|G_1}}+s^{(2)}_{\bb{i}_{|G_2},\bb{j}_{|G_2}}}$ 
for every $\bb{i},\bb{j}\in\mc{A}$, then $m_{L}(s)=m_{L\cap G_1}(s^{(1)})+m_{L\cap G_2}(s^{(2)})$ 
for every subset $L$ of $\enu{n}$.
This implies that  the  first $k$ coordinates of diffusion limit evolve independently of the 
last $n-k$ coordinates and that the generator of the diffusion limit is:
$$\mc{G}_{n,s}=\mc{G}_{k,s_1}\otimes\mc{G}_{n-k,s_2}.$$ 
Therefore, with these choices we can reduce our study to subgroups of loci having the same 
influence on assortment. 
\subsection{Conditions for independent coordinates}
For some patterns of assortment, the allelic frequencies at each locus in a large population 
evolve approximately as independent diffusions:
\begin{prop}\label{propindep}
Assume that the assortment parameters $s=\{s_{\bb{i},\bb{j}},\ \bb{i},\bb{j}\in\mc{A}\}$ 
satisfy the assumption \Href{Hpairing}. 
\begin{enumerate}
\item
The $n$ coordinates of the diffusion associated with the generator $\mc{G}_{n,s}$ are 
independent if and only if the following condition holds:
\begin{description}
\item{\refstepcounter{hyp}(H\thehyp)\label{Hindep}}
for every $i\in\enu{n}$, $m_{L\cup\{i\}}(s)-m_{L}(s)$ does not depend on the choice of the 
subset $L$  of $\enu{n}\setminus\{i\}$.
\end{description}
\item If condition (\Href{Hindep}) holds, the $i$-th coordinate behaves as the one-locus 
diffusion with assortment coefficients $s_0=s_{\bb{1},\bb{1}}$ and 
$s_{1}=s_{u_i,\bb{1}}$ where $u_i=(0_{\{i\}},\bb{1}_{\enu{n}\setminus\{i\}})$ denotes the genotype which differs from the genotype $\bb{1}$ only on the locus $i$; its generator is 
$$\frac{1}{2}x(1-x)\frac{d^2}{dx^2}+\Big((1-x)\mu_1-x\mu_0+
(s_{u_i,\bb{1}}-s_{\bb{1},\bb{1}})(1/2-x)x(1-x)\Big)\frac{d}{dx}.$$
\item 
In particular,
\begin{itemize}
\item[(a)] with the Hamming criterion, $\mc{G}_{n,s}$ is the generator of $n$ independent one-dimensional 
diffusions if and only if the value of $s_{\ell+1}-s_{\ell}$ does not depend on $\ell$;
\item[(b)] with the additive criterion, $\mc{G}_{n,s}$ is the generator of $n$ independent one-dimensional
diffusions if and only if there exists a constant $c$ such that $s_{\ell+1}-s_{\ell}=c(2\ell+1)$ 
for every $\ell\in\enum{0}{n-1}$. 
\end{itemize}
\end{enumerate}
\end{prop}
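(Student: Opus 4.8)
The plan is to first reduce the independence of the coordinates to an algebraic condition on the drift, and then to translate that condition into (\Href{Hindep}) using the difference-operator calculus already set up in \eqref{Pgeneral}--\eqref{expalpdiffset}. Since the second-order part of $\mc{G}_{n,s}$ is diagonal with $i$-th coefficient $x_i(1-x_i)$ depending only on $x_i$, I would first argue that independence of the coordinate processes is equivalent to $\mc{G}_{n,s}$ splitting as a sum $\sum_{i=1}^{n}\mc{L}_i$ of one-dimensional operators (the cross-variations already vanish because the matrix is diagonal, so the only issue is whether each marginal is autonomous). Equivalently, each drift coefficient
$$b_i(x)=(1-x_i)\mu_1-x_i\mu_0+(1/2-x_i)x_i(1-x_i)P_{i,s}(x)$$
must depend only on $x_i$. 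The first two terms already do, the prefactor $(1/2-x_i)x_i(1-x_i)$ is a non-zero polynomial in $x_i$ alone, and $P_{i,s}$ is a polynomial in the variables $x_k(1-x_k)$, $k\neq i$; differentiating $b_i$ in $x_k$ shows no cancellation is possible, so $b_i$ depends only on $x_i$ if and only if $P_{i,s}$ is constant. Thus independence is equivalent to $P_{i,s}$ being constant for every $i$.

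Next I would use the expansion \eqref{Pgeneral}: $P_{i,s}$ is constant if and only if $\alpha_{i,L}(s)=0$ for every non-empty $L\subset\enu{n}\setminus\{i\}$. Writing $g_i(A):=m_{A\cup\{i\}}(s)-m_A(s)=\delta_i[m(s)](A)$, commutativity of the difference operators together with \eqref{expalpdiffset} gives $\alpha_{i,L}(s)=2^{|L|}\delta_{L}[g_i](\emptyset)$. The inversion identity $f(A)=\sum_{B\subset A}\delta_B[f](\emptyset)$, which follows from \eqref{expdiffset}, applied to $f=g_i$ on $\enu{n}\setminus\{i\}$ shows that $\delta_B[g_i](\emptyset)=0$ for all non-empty $B$ if and only if $g_i$ is constant, which is exactly condition (\Href{Hindep}) at index $i$; taking the conjunction over $i$ proves part~1. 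For part~2, under (\Href{Hindep}) the only surviving term of \eqref{Pgeneral} is the constant $\alpha_{i,\emptyset}(s)=m_{\{i\}}(s)-m_\emptyset(s)$, and using \Href{Hpairing} I would evaluate $m_\emptyset(s)=s_{\bb{1},\bb{1}}$ (all diagonal parameters coincide) and $m_{\{i\}}(s)=s_{u_i,\bb{1}}$ (all pairs differing exactly at locus $i$ share a common value), which yields the stated one-locus generator.

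Finally, for part~3 I would specialise these formulas. With the Hamming criterion $m_L(s)=s_{|L|}$, so $g_i(L)=s_{|L|+1}-s_{|L|}$ and (\Href{Hindep}) holds if and only if $s_{\ell+1}-s_\ell$ is independent of $\ell$. The additive case is the only genuinely non-routine point and is where I expect the main obstacle to lie: here $m_L(s)$ depends only on $|L|$, say $m_L(s)=M(|L|)$, so (\Href{Hindep}) amounts to $M$ being affine. The key observation is that $M(\ell)=\EE[s_{|S_\ell|}]$, where $S_\ell$ is a simple symmetric random walk of length $\ell$; hence if $s_\ell=s_0+c\ell^2$ (equivalently $s_{\ell+1}-s_\ell=c(2\ell+1)$) then $M(\ell)=s_0+c\,\EE[S_\ell^2]=s_0+c\ell$ is affine. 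To obtain the converse I would note that $s\mapsto M$ is lower triangular with non-zero diagonal, hence injective; since both the space of sequences $s_\ell=s_0+c\ell^2$ and the space of affine sequences $M$ have dimension $2$, the image of the former is exactly the latter, giving the stated equivalence. Inverting the averaging $M(\ell)=\EE[s_{|S_\ell|}]$ is the delicate step, and it is the identity $\EE[S_\ell^2]=\ell$ that makes the quadratic sequences correspond precisely to the affine ones.
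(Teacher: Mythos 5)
Your proposal is correct, and for parts 1, 2 and 3(a) it follows essentially the same route as the paper: reduce independence to the constancy of each $P_{i,s}$, pass to the coefficients $\alpha_{i,L}(s)=2^{|L|}\delta_{L\cup\{i\}}[m(s)](\emptyset)$ via \eqref{Pgeneral}--\eqref{expalpdiffset}, and use the inversion identity \eqref{invdifset} to show that the vanishing of these coefficients for $L\neq\emptyset$ is equivalent to $A\mapsto\delta_i[m(s)](A)$ being constant; your identification of the surviving constant as $m_{\{i\}}(s)-m_{\emptyset}(s)=s_{u_i,\bb{1}}-s_{\bb{1},\bb{1}}$ for part 2 is also what the paper implicitly uses. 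Where you genuinely diverge is part 3(b). The paper computes $m_{L\cup\{i\}}(s)-m_{L}(s)$ explicitly for the additive criterion (its formula \eqref{deltamadd}, distinguishing $|L|$ odd and even) and then solves the resulting system directly to get $\delta^{(1)}[s](k)=c(2k+1)$. You instead observe that $m_L(s)=\EE[s_{|S_{|L|}|}]$ for a simple symmetric random walk, so that $s_\ell=s_0+c\ell^2$ maps to the affine sequence $s_0+c\ell$ by $\EE[S_\ell^2]=\ell$, and you obtain the converse by noting that $s\mapsto(m_{L}(s))$ is triangular with non-zero diagonal, hence injective, so the two-dimensional space of quadratic sequences maps onto the two-dimensional space of affine ones. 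This is a clean and fully rigorous alternative: it trades the paper's explicit combinatorial identity for a structural (probabilistic plus dimension-count) argument, at the cost of being less explicit about what $m_{L\cup\{i\}}(s)-m_L(s)$ actually is, which the paper reuses elsewhere. The only point you gloss over, as does the paper, is the converse implication in the very first reduction (that independence of the coordinate processes forces each drift $b_i$ to be autonomous); your remark that no cancellation is possible because $P_{i,s}$ does not involve $x_i$ is the right observation and is at the same level of detail as the paper's unproved assertion.
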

\begin{proof}
First note that $\mc{G}_{n,s}$ is the generator of $n$ independent diffusions if and only if 
the polynomial term $P_{i,s}(x)$ is a constant function for every $i\in\enu{n}$. 
\begin{enumerate}
\item According to the formula \eqref{expresP}, the polynomial term $P_{i,s}(x)$ is a constant 
function for every $i\in\enu{n}$ whenever condition \Href{Hindep} holds.
Conversely, assume that  the polynomial term $P_{i,s}(x)$ is a constant function 
for every $i\in\enu{n}$. By  formulae \eqref{Pgeneral} and \eqref{expalpdiffset},  $\delta_{L}[m(s)](\emptyset)=0$ for every subset $L$ of $\enu{n}$ 
having at least two elements.  We derive, from the inversion formula \eqref{invdifset} stated in the 
Appendix, that for every subset $A\in \mc{P}(\enu{n})$,  
$$\delta_{i}[m(s)](A)=\sum_{B\subset A}\delta_{B\cup \{i\}}[m(s)](\emptyset)=\delta_{i}[m(s)](\emptyset). $$
Therefore, condition \Href{Hindep} is satisfied. 
\item With the Hamming criterion,  $m_A(s)=s_{|A|}$ and condition \Href{Hindep} is equivalent 
to $$s_{k+1}-s_{k}=s_{1}-s_{0} \text{ for every }k\in\enum{1}{n-1}.$$
\item With the additive criterion, for a subset $L$ with $\ell$ elements 
$m_L(s)=2^{-\ell}\sum_{j=0}^{\ell}\binom{\ell}{j}s_{|2j-\ell|}$. 
After some computation, we obtain for $i\not\in L$,  
\begin{multline}  \label{deltamadd} 
m_{L\cup\{i\}}(s)-m_{L}(s)=2^{-\ell}\sum_{j=0}^{\ell}\binom{\ell}{j}(s_{|2j-\ell+1|}-s_{|2j-\ell|})\\
  =\begin{cases}2^{-\ell}\underset{j=1}{\overset{\frac{\ell+1}{2}}{\sum}}\binom{\ell}{\frac{\ell+1}{2}-j}\delta^{(2)}[s](2j-2) & \text{ if }\ell \text{ is odd},\\
2^{-\ell}\Big(\underset{j=1}{\overset{\frac{\ell}{2}}{\sum}}\binom{\ell}{\frac{\ell}{2}-j}\delta^{(2)}[s](2j-1)+\binom{\ell}{\frac{\ell}{2}}\delta^{(1)}[s](0)\Big) & \text{ if }\ell \text{ is even}.
\end{cases}
\end{multline}
It follows from \eqref{deltamadd} that for every $c\in\R$, the system defined 
by $$m_{L\cup\{i\}}(s)-m_{L}(s)=c\text{ for every }i\in\enu{n} 
\text{ and }L\subset\enu{n}\setminus\{i\}$$
has a unique solution which is $\delta^{(1)}[s](k)=c(2k+1)$ for every $k\in\enum{0}{n-1}$. 
\end{enumerate}
\end{proof}
\subsection{Behaviour at the boundaries\label{boundariemultilocus}}

In this section the trajectories of the coordinates of the limiting diffusion are compared with 
those of one-dimensional diffusions in order to investigate whether an allele can be 
(instantaneously) fixed at one of the loci. 

Consider the stochastic differential equations associated with the generator $\mc{G}_{n,s}$:
\begin{equation} \label{sde}
dx_t(i)=\sqrt{x_t(i)(1-x_t(i))}dW_t(i)+b_{i}(x_t)dt\quad \forall i\in\enu{n},
\end{equation}
where 
$(W_t(1))_{t\geq 0}$,\ldots, $(W_t(n))_{t\geq 0}$ denote $n$ independent standard Brownian motions,
and $$b_{i}(x)=\mu_1(1-x(i))-\mu_0x(i)+(1/2-x(i))x(i)(1-x(i))P_{i,s}(x)\text{ for }i\in\enu{n}.$$
Theorem~1 of \cite{Yamada} 
ensures pathwise uniqueness for the stochastic differential equation 
\eqref{sde}, since the drift is Lipschitz and the diffusion matrix is a diagonal matrix of the 
form  
$$\sigma(x)=\text{diag}(\sigma_{1}(x(1)),\ldots,\sigma_n(x(n))),$$ 
where the functions $\sigma_i$ are $1/2$-H\"{o}lder continuous functions. 

The following proposition shows that, just as for the one-locus case, the 
boundary behaviour of the solution to \eqref{sde} depends only on the values of the 
mutation rates $\mu_0$ and $\mu_1$.
\begin{prop}\label{boundary}
Let $(x_t)_{t\geq 0}$ denote a solution of the stochastic differential equation 
\eqref{sde} starting from a point $x_0\in]0,1[^n$. 
\begin{itemize}
\item[(i)] If $\mu_1=\mu_0=0$ then  the diffusion process $(x_t)_t$ exits from $]0,1[^n$ in a finite time almost surely. 
\item[(ii)] If $\mu_1=0$ and $\mu_0>0$ then  each coordinate of $(x_t)_t$ reaches the point $0$ in a finite time almost surely. 
\item[(iii)] If $0<\mu_1<1/2$ then $0$ is attainable for each coordinate of the diffusion process: 
  $$\PP[\exists t>0,\ x_{t}(i)=0]>0\quad \forall i\in\enu{n}.$$
\item[(iv)] If $\mu_1\geq 1/2$ then $0$ is inaccessible for each coordinate of the diffusion 
process: $$\PP[\exists t>0,\ x_{t}(i)=0]=0\text{ and }\PP[\lim_{t \rightarrow +\infty}x_t(i)=0]=0\text{ for every }i\in\enu{n}.$$
\end{itemize}
Similar statements to (ii), (iii) and (iv) hold for the point $1$ on exchanging the r\^oles
of $\mu_1$ and $\mu_0$. 
\end{prop}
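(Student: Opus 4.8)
The plan is to reduce the multidimensional boundary analysis to the one-locus case recorded in Section~\ref{sectonelocus}, treating one coordinate at a time. The crucial structural observation is that $P_{i,s}(x)$, given by \eqref{expresP}, depends only on the variables $x_k(1-x_k)$ with $k\neq i$ and not on $x_i$ itself; moreover it is a polynomial, hence bounded on the compact cube $[0,1]^n$. Writing $m_i=\inf P_{i,s}$ and $M_i=\sup P_{i,s}$ (infimum and supremum over the other $n-1$ coordinates), the $i$-th drift $b_i(x)=\mu_1(1-x_i)-\mu_0 x_i+(1/2-x_i)x_i(1-x_i)P_{i,s}(x)$ is, for each fixed value $u$ of $x_i$, squeezed between two functions $\underline b_i(u)$ and $\overline b_i(u)$ of $u$ alone, obtained by replacing $P_{i,s}$ with $m_i$ or $M_i$ according to the sign of $1/2-u$. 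Near $u=0$ both bounding functions coincide with genuine one-locus drifts carrying the same mutation rate $\mu_1$ and a constant selection strength, so their Feller classification at $0$ is exactly the one recalled in Section~\ref{sectonelocus}.

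First I would set up a pathwise comparison. Since the diffusion coefficient $\sqrt{x_i(1-x_i)}$ depends only on $x_i$ and is $1/2$-H\"older, pathwise uniqueness holds for each bounding one-dimensional equation by the theorem of \cite{Yamada} already invoked for \eqref{sde}. Driving the one-locus diffusions $\underline y$ and $\overline y$ (with drifts $\underline b_i$ and $\overline b_i$) by the same Brownian motion $W(i)$ and starting them from $x_0(i)$, the one-dimensional comparison theorem then yields $\underline y_t\le x_t(i)\le \overline y_t$ for all $t$, because along the trajectory of $x(i)$ its genuine drift $b_i(x_t)$ is dominated below by $\underline b_i(x_t(i))$ and above by $\overline b_i(x_t(i))$.

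With this sandwich, the assertions for positive $\mu_1$ follow by transferring the one-locus behaviour. For (iv) I would use the lower bound: when $\mu_1\ge 1/2$ the point $0$ is an entrance boundary for $\underline y$, so $\underline y$ neither reaches $0$ nor converges to it (in natural scale, $s(0+)=-\infty$); since $0\le \underline y_t\le x_t(i)$, the same holds for $x_t(i)$, giving both displayed identities. For (iii), $0<\mu_1<1/2$ makes $0$ a regular, hence attainable, boundary for the upper bound $\overline y$; as $x_t(i)\le \overline y_t$, whenever $\overline y$ reaches $0$ so does $x(i)$, whence $\PP[\exists t>0,\ x_t(i)=0]>0$. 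For (ii), $\mu_1=0$ with $\mu_0>0$ makes $0$ absorbing and accessible for $\overline y$ while $\mu_0>0$ prevents fixation at $1$, so $\overline y$ reaches $0$ almost surely in finite time and domination forces $x(i)$ to do likewise. The statements at the boundary $1$ come from the symmetry $x_i\mapsto 1-x_i$, $\mu_0\leftrightarrow\mu_1$.

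The case (i), $\mu_0=\mu_1=0$, is where I expect the real difficulty, because then the two bounding one-locus diffusions may be absorbed at opposite endpoints, so the sandwich no longer identifies which face of the cube is hit. The remedy is that when both mutation rates vanish the constant part of the drift disappears: $b_i(x)=(1/2-x_i)x_i(1-x_i)P_{i,s}(x)$ satisfies $|b_i(x)|\le C_i\, x_i(1-x_i)$, i.e. the drift is bounded by a constant times the squared diffusion coefficient. Hence $b_i/\sqrt{x_i(1-x_i)}$ is bounded, Novikov's condition holds, and a Girsanov change of measure, equivalent to $\PP$ on every $\mathcal{F}_t$ up to the exit time, removes the drift of the $i$-th coordinate, turning it into the driftless diffusion $dx_t(i)=\sqrt{x_t(i)(1-x_t(i))}\,d\tilde W_t$. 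For this process the endpoints $\{0,1\}$ are exit boundaries reached in finite time almost surely (its scale function is the identity and the speed measure is integrable against it at each end), so the $i$-th coordinate is absorbed in finite time under the equivalent measure, and by equivalence under $\PP$ as well. Since absorption of any single coordinate already carries the vector to the boundary of $]0,1[^n$, this proves (i). The main technical care throughout is to check that the comparison theorem applies even though the other coordinates enter $b_i$ as an adapted, merely continuous input (this is legitimate because only the pointwise drift inequality and one-dimensional uniqueness are used), and, for (i), to justify the Girsanov reduction up to the exit time.
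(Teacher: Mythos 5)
Your treatment of (ii), (iii) and (iv) coincides with the paper's proof: you sandwich the $i$-th coordinate between two one-dimensional diffusions whose drifts are obtained by replacing the bounded polynomial $P_{i,s}$ by its extreme values according to the sign of $1/2-u$, you invoke pathwise uniqueness for the bounding equations (Yamada--Watanabe) and the Ikeda--Watanabe comparison theorem (which, as you correctly note, tolerates the adapted drift $b_i(x_t)$ on one side of the inequality), and you transfer the one-locus Feller classification at $0$ and $1$ through the sandwich. Those parts are correct and are exactly what the paper does.

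The problem is in (i). You remove the drift of the $i$-th coordinate by Girsanov (the kernel $-b_i/\sqrt{x_i(1-x_i)}$ is indeed bounded when $\mu_0=\mu_1=0$, since $|b_i|\leq C\,x_i(1-x_i)$), observe that under the new measure $Q$ the coordinate is the driftless diffusion, which is absorbed at $\{0,1\}$ in finite time $Q$-almost surely, and conclude ``by equivalence under $\PP$ as well.'' That last step does not follow. Girsanov gives equivalence of $Q$ and $\PP$ on each $\mathcal{F}_t$ with $t<\infty$, but the event $\{\tau<\infty\}$ (with $\tau$ the absorption time) belongs to $\mathcal{F}_\infty$ and to no finite-horizon $\sigma$-field; measures that are mutually absolutely continuous on every $\mathcal{F}_t$ can be mutually singular at infinity, and hitting probabilities genuinely change under such changes of measure (Brownian motion with and without constant drift is the standard example). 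Nor can you restore uniform integrability of the density process by stopping at $\tau$: on $\{\tau=\infty\}$ the exponential martingale may degenerate, so the argument becomes circular. The conclusion can be rescued, but only quantitatively: since the Girsanov kernel is bounded, Cauchy--Schwarz gives $\PP_x[\tau\leq T]\geq e^{-C^2T}\,Q_x[\tau\leq T]^2$, the quantity $Q_x[\tau\leq T]$ is bounded away from $0$ uniformly in $x$ for $T$ large (the driftless diffusion has expected absorption time at most $2\ln 2$), and one then iterates with the strong Markov property to get probability one. Some such renewal argument is indispensable; the bare appeal to equivalence is wrong. The paper's own proof of (i) avoids Girsanov entirely and runs precisely this renewal scheme: $(x_t)$ leaves every compact $[\epsilon,1-\epsilon]^n$ in finite time almost surely, from any exit point of that compact the comparison with $\xi^{\pm}(i)$ (whose endpoints are absorbing and accessible when $\mu_0=\mu_1=0$) gives a probability of reaching the boundary of $[0,1]^n$ bounded below by a positive constant depending only on $\epsilon$, and the strong Markov property then forces absorption with probability one.
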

\begin{proof}
Let $i\in\enu{n}$. On $[0,1]^n$ the polynomial function $P_{i,s}$ is bounded above by   
\[ M_{i}^{+}=\sum_{A\subset\enu{n}\setminus\{i\}}2^{-|A|}\max\big\{m_{A\cup \{i\}}(s)-m_{A}(s),0
\big\}
 \]
and is bounded below by
\[ M_{i}^{-}=-\sum_{A\subset\enu{n}\setminus\{i\}}2^{-|A|}\max\big\{-(m_{A\cup \{i\}}(s)-m_{A}(s)),0
\big\}.\]
Let $b_{i}^{+}$ and $b_{i}^{-}$ denote the  functions defined on $[0,1]$ by 
\begin{eqnarray*} b_{i}^{+}(u)&=&\mu_1(1-u)-\mu_0u
 +(1/2-u)u(1-u)(M_{i}^{+}\un_{\{u<1/2\}}+M_{i}^{-}\un_{\{u>1/2\}}),\\
b_{i}^{-}(u)&=&\mu_1(1-u)-\mu_0 u +(1/2-u)u(1-u)(M_{i}^{+}\un_{\{u>1/2\}}+M_{i}^{-}\un_{\{u<1/2\}}),
\end{eqnarray*}
for every  $u\in[0,1]$.
 For every $i\in\enu{n}$,  pathwise uniqueness holds for the following two stochastic differential equations: 
\begin{equation}
 du_t=\sqrt{u_t(1-u_t)}dW_t(i)+b^{+}_{i}(u_t)dt\label{sdeplus}
\end{equation}
and 
 \begin{equation}
 du_t=\sqrt{u_t(1-u_t)}dW_t(i)+b^{-}_{i}(u_t)dt.\label{sdeminus}
\end{equation}
Let $\xi^{+}_{t}(i)$ and $\xi^{-}_{t}(i)$ be the solution starting from  $x_0(i)$ of the stochastic differential equations \eqref{sdeplus} and \eqref{sdeminus} respectively. 
As the $i$-th coordinate of the drift is bounded above by $b_{i}^{+}$ and is bounded below by  $b_{i}^{-}$, the comparison theorem of \cite{Ikeda} ensures that 
the following inequalities hold with probability one:
\begin{equation}\label{comparison}\xi^{-}_{t}(i)\leq x_t(i)\leq \xi^{+}_{t}(i)\quad  \forall t\geq 0,\ \forall i\in\enu{n}.
\end{equation}
The  nature of the points $0$ and $1$ as described by the Feller classification is the same for 
$(\xi^{-}_t(i))_t$ and $(\xi^{+}_{t}(i))_t$ and depends only on $\mu_1$ and $\mu_0$. 
To describe their behaviours near $0$,  let $\tau^{\pm,i}_z(a,b)$ denote the first time  
the process $(\xi^{\pm}_t(i))_t$, starting from $z$, exits $(a,b)$ for $0\leq a<z <b \leq 1$.
\begin{enumerate}
\item If $\mu_1=\mu_0=0$ then $0$ and $1$ are absorbing points;  $(\xi^{\pm}_t(i))_t$ reaches $0$ or $1$ in a finite time with probability one and 
$$\PP\left[\lim_{t\rightarrow \tau^{\pm,i}_z(0,1)}\xi^{\pm}_{t}(i)=0\right]
=\dfrac{\int_{z}^{1}\exp\Big(-\int_{1/2}^{x}\frac{2b^{\pm}_i(u)}{u(1-u)}du\Big)dx}{\int_{0}^{1}\exp\Big(-\int_{1/2}^{x}\frac{2b^{\pm}_i(u)}{u(1-u)}du\Big)dx}.$$
\item If $\mu_1=0$ and $\mu_0>0$ then $0$ is the only absorbing point and $(\xi^{\pm}_t(i))_t$ reaches $0$ in a finite time with probability one.
\item If $0<\mu_1<1/2$ and $\mu_0>0$ then 0 is attainable: for every $0<z<b<1$, 
$$\PP\Big[\tau^{\pm,i}_z(0,b)<+\infty 
\text{ and }\lim_{t\rightarrow \tau^{\pm,i}_z(0,b)}\xi^{\pm}_t(i)=0\Big]>0.$$
\item If $\mu_1\geq 1/2$ and $\mu_0>0$ then $0$ is inaccessible: for every $0<z<1$, 
$$\PP_z[\exists t>0,\ \xi^{\pm}_t(i)=0]=0\text{ and }\PP_z\big[ \lim_{t\rightarrow +\infty}\xi^{\pm}_t(i)=0 \big]=0.$$
\end{enumerate}
Similar properties hold for the behaviour near the point 1.\\
These properties are sufficient to prove the boundary behaviour claimed for $(x_t)_t$. 
Since ${x_t(i)\leq \xi^{+}_{t}(i)}$ for every $t\geq 0$, if $(\xi^{+}_{t}(i))_t$ reaches $0$ in a 
finite time then so must $(x_{t}(i))_t$.  Similarly, if $0$ is attainable 
for $(\xi^{+}_{t}(i))_t$ then $0$ is also attainable for $(x_{t}(i))_t$. 
In the same way, since $x_t(i)\geq \xi^{-}_{t}(i)$ for every $t\geq 0$, if $0$ is inaccessible 
for $(\xi^{-}_{t}(i))_t$ then  $0$ is also inaccessible for $(x_{t}(i))_t$. 

It remains to prove that $(x_t)_t$ exits from $]0,1[$ in a finite time with probability one 
if $\mu_1=\mu_0=0$.  Let $\epsilon>0$ be small enough that $x_0\in[\epsilon,1-\epsilon]^n$.  
The diffusion $x_t$ exits from  the compact $[\epsilon,1-\epsilon]^n$ in a finite time with 
probability one.  Let $x_{\epsilon}$ be a point on the boundary of $[\epsilon,1-\epsilon]^n$. 
There exists $i\in\enu{n}$ such that $x_{\epsilon}(i)\in\{\epsilon,1-\epsilon\}$.  For $z\in]0,1[$, 
set $\phi_{i}^{\pm}(z) :=\PP_{z}[\lim_{t\rightarrow \tau^{\pm,i}(0,1)}\xi^{\pm}_t(i)=0]$. 
By the comparison theorem applied to the solutions of the stochastic differential equations 
\eqref{sde}, \eqref{sdeplus} and \eqref{sdeminus} starting from $x_{\epsilon}$, the probability 
that the solution of \eqref{sde} starting at $x_{\epsilon}$ reaches the boundary of $[0,1]^n$ 
in a finite time is greater than $\phi_{i}^{+}(\epsilon)$ if $x^{\epsilon}_i =\epsilon$ 
and is greater than $1-\phi_{i}^{-}(1-\epsilon)$ if $x^{\epsilon}_i =1-\epsilon$. 
By the strong Markov property, the probability that $(x_t)$ reaches the boundary in a 
finite time is greater than $\min\{\min(\phi_{i}^{+}(\epsilon),1-\phi_{i}^{-}(1-\epsilon)), 
i\in\enu{n}\}$ for every $\epsilon>0$.  Therefore $(x_t)_t$ reaches the boundary in a finite 
time with probability one. 
\end{proof}
\section{The stationary measure of the limiting diffusion\label{sectstatmeas}}
%
%
\subsection{Existence of a stationary distribution and an expression for its density}
As in the one-locus case, when the mutation rates are strictly positive, the diffusion has 
a reversible stationary distribution:
\begin{prop}\label{existinv}
 Assume that the hypothesis \Href{Hpairing} holds and that the mutation rates $\mu_0$ and $\mu_1$ 
are strictly positive.  Set $\tilde{s}_{\bb{i},\bb{j}}=s_{\bb{i},\bb{j}}-s_{\bb{1},\bb{1}}$ 
for every pair of types $\bb{i},\bb{j}\in\mc{A}$.
The diffusion with generator $\mc{G}_{n,s}$ has a unique reversible stationary distribution 
which has the following density with respect to the Lebesgue measure on $[0,1]^n$: 
$$g_{n,\mu,s}(x)=C_{n,\mu,s}\prod_{i=1}^{n}x_{i}^{2\mu_1-1}(1-x_i)^{2\mu_0-1}\exp(H_{n,s}(x))$$ where 
\begin{itemize}
 \item $\displaystyle{H_{n,s}(x)=\frac{1}{2}\underset{L\subset\enu{n},\ |L|\geq 1}{\sum}m_L(\tilde{s})\prod_{\ell\in L}(2x_{\ell}(1-x_{\ell}))\prod_{k\in\enu{n}\setminus L}(1-2x_{k}(1-x_k))}$;
\item 
$C_{n,\mu,s}$ is chosen so that $\displaystyle{\int_{[0,1]^n}g_{n,\mu,s}(x_1,\ldots,x_n)dx_1\cdots dx_n=1}$.
\end{itemize}
\end{prop}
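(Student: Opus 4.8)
The plan is to recognise that, since the diffusion matrix of $\mc{G}_{n,s}$ is diagonal with entries $a_i(x)=x_i(1-x_i)$, reversibility with respect to a smooth positive density $g$ on $]0,1[^n$ is equivalent to a detailed-balance (zero probability-current) condition. Concretely, I would observe that $\mc{G}_{n,s}$ is symmetric in $L^2(g\,dx)$ precisely when it can be written in divergence (Sturm--Liouville) form
\[
\mc{G}_{n,s}f=\frac{1}{2g}\sum_{i=1}^{n}\frac{\partial}{\partial x_i}\Big(g\,a_i\,\frac{\partial f}{\partial x_i}\Big),
\]
and expanding this and matching with the explicit generator shows that this holds if and only if
\[
b_i(x)=\tfrac12\partial_{x_i}a_i(x)+\tfrac12 a_i(x)\,\partial_{x_i}\log g(x)\qquad(i\in\enu{n}),
\]
where $b_i$ is the drift from \eqref{sde}. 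So the whole proposition reduces to checking this identity for $g=g_{n,\mu,s}$.

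Plugging in $g_{n,\mu,s}$ and splitting $\partial_{x_i}\log g=\tfrac{2\mu_1-1}{x_i}-\tfrac{2\mu_0-1}{1-x_i}+\partial_{x_i}H_{n,s}$, a short routine computation shows that the contribution of the mutation factor $x_i^{2\mu_1-1}(1-x_i)^{2\mu_0-1}$, together with the correction $\tfrac12\partial_{x_i}a_i=\tfrac12(1-2x_i)$, reproduces exactly $(1-x_i)\mu_1-x_i\mu_0$. Hence the detailed-balance condition collapses to the single identity
\[
\partial_{x_i}H_{n,s}(x)=(1-2x_i)P_{i,s}(x).
\]

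This identity is the heart of the argument. I would use that $H_{n,s}$ depends on each coordinate only through $y_k:=2x_k(1-x_k)$, so that $\partial_{x_i}H_{n,s}=2(1-2x_i)\,\partial_{y_i}H_{n,s}$ and it suffices to prove $2\,\partial_{y_i}H_{n,s}=P_{i,s}$. Differentiating the expression for $H_{n,s}$, which is multilinear in the $y_k$, with respect to $y_i$ splits the sum over $L$ into the terms with $i\in L$ (write $L=A\cup\{i\}$, $A\subset\enu{n}\setminus\{i\}$) and those with $i\notin L$ (write $L=A$); both reduce to the common monomial $\prod_{\ell\in A}y_\ell\prod_{k\notin A\cup\{i\}}(1-y_k)$, the second with a minus sign, and combine into $\tfrac12\sum_{A\subset\enu{n}\setminus\{i\}}\big(m_{A\cup\{i\}}(\tilde s)-m_A(\tilde s)\big)\prod_{\ell\in A}y_\ell\prod_{k\notin A\cup\{i\}}(1-y_k)$. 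Since $m_{A\cup\{i\}}(\tilde s)-m_A(\tilde s)=m_{A\cup\{i\}}(s)-m_A(s)$, this is exactly $\tfrac12 P_{i,s}(x)$ by \eqref{expresP}. The one delicate point is that the regrouping produces a stray $A=\emptyset$ term carrying the factor $m_\emptyset(\tilde s)$; here the shift to $\tilde s$ is essential, because $F_\emptyset$ consists of the diagonal pairs $(\bb{i},\bb{i})$ and so $m_\emptyset(\tilde s)=2^{-n}\sum_{\bb{i}}(s_{\bb{i},\bb{i}}-s_{\bb{1},\bb{1}})=0$ by assumption \Href{Hpairing}, which makes the unwanted term vanish.

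It then remains to upgrade the formal detailed-balance identity to a genuine unique reversible \emph{probability} measure. I would check integrability of $g_{n,\mu,s}$ on $[0,1]^n$: near each face the dangerous factor is $x_i^{2\mu_1-1}$ (resp.\ $(1-x_i)^{2\mu_0-1}$) while $H_{n,s}$ stays bounded, so integrability — and hence the normalising constant $C_{n,\mu,s}$ — holds as soon as $\mu_0,\mu_1>0$. The same positivity makes the boundary terms in the integration by parts vanish, since $g\,a_i\sim x_i^{2\mu_1}\to0$ as $x_i\to0$, so the symmetry of $\mc{G}_{n,s}$ in $L^2(g\,dx)$ is rigorous and $g_{n,\mu,s}$ is reversible, hence stationary. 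For uniqueness I would invoke that $\mu_0,\mu_1>0$ makes the boundary non-absorbing (Proposition~\ref{boundary}), so the diffusion generated by $\mc{G}_{n,s}$ (which exists by Theorem~\ref{thgen}(a)) is irreducible on $[0,1]^n$ and therefore admits at most one stationary distribution. I expect the main obstacle to be purely algebraic — the verification of $2\,\partial_{y_i}H_{n,s}=P_{i,s}$ with correct bookkeeping of the $A=\emptyset$ term — whereas the analytic points (integrability, vanishing boundary terms, uniqueness) follow routinely from $\mu_0,\mu_1>0$.
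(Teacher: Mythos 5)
Your proof is correct, but it takes a genuinely different route from the paper's. The paper's proof is a short reduction: it writes $\mc{G}_{n,s}=\mc{G}_{n,0}+\frac{1}{2}\sum_{i}x_i(1-x_i)\,\partial_i h(x)\,\partial_i$ with $h(x)=\sum_{|L|\geq 1}2^{|L|-1}\delta_L[m(s)](\emptyset)\prod_{\ell\in L}x_\ell(1-x_\ell)$ (which equals $H_{n,s}$ by the expanded/factorised equivalence of Lemma~\ref{multivariate}), notes that the unperturbed diffusion is reversible for the product of Beta$(2\mu_0,2\mu_1)$ laws, and then invokes the Fukushima--Stroock theorem on reversibility for martingale problems (following \cite{EthierNagylaki89}) to conclude both existence and uniqueness of the reversible law $C\exp(h)\,m_{\mu,0}(dx)$ in one stroke. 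You instead verify the zero-current/divergence-form condition $b_i=\frac12\partial_i a_i+\frac12 a_i\partial_i\log g$ by hand; the algebraic heart of your argument, $\partial_{x_i}H_{n,s}=(1-2x_i)P_{i,s}(x)$, is exactly the identity $\partial_i h=(1-2x_i)P_{i,s}$ underlying the paper's decomposition, and your bookkeeping of the stray $A=\emptyset$ term via $m_\emptyset(\tilde s)=0$ (a consequence of \Href{Hpairing}) is correct, as is the observation that $m_{A\cup\{i\}}(\tilde s)-m_A(\tilde s)=m_{A\cup\{i\}}(s)-m_A(s)$. What your route buys is self-containedness (no appeal to Fukushima--Stroock) and an explicit check that the mutation factor $x_i^{2\mu_1-1}(1-x_i)^{2\mu_0-1}$ accounts for the drift term $\mu_1(1-x_i)-\mu_0x_i$; what it costs is that the final steps --- integrability, vanishing of boundary terms in the symmetry computation, and above all uniqueness via irreducibility of a degenerate diffusion on the cube (including ruling out stationary mass on the faces) --- are asserted as routine rather than derived from a citable theorem. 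Those assertions are all defensible when $\mu_0,\mu_1>0$ (interior ellipticity plus inward-pointing drift at each face), but they are precisely the points the paper's appeal to Fukushima--Stroock is designed to dispose of.
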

\begin{rem}
An expansion of the polynomial function $H_{n,s}$ yields: 
$$H_{n,s}(x)=\sum_{L\subset\enu{n},\ |L|\geq 1}2^{|L|-1}\delta_{L}[m(s)](\emptyset)\prod_{\ell\in L}x_{\ell}(1-x_{\ell}).$$
\end{rem}
\begin{proof}[Proof of Proposition~\ref{existinv}]
Let $\mathcal{G}_{n,0}$ denote the generator of the limiting diffusion in the random mating 
case ($s_{\bb{i},\bb{j}}=0$ for every $\bb{i},\bb{j}\in\mc{A}$). The diffusion associated 
with this generator is ergodic and has a reversible stationary distribution $m_{\mu,0}$ 
which is the product of Beta distributions: 
$m_{\mu,0}:=(\text{Beta}(2\mu_0, 2\mu_1))^{\otimes n}$.
In the general case, the generator $\mathcal{G}_{n,s}$ can be decomposed as 
$$\mathcal{G}_{n,s}=\mathcal{G}_{n,0}+\frac{1}{2}\sum_{i=1}^{n}x_{i}(1-x_{i})\partial_i h(x)\partial_i$$
where $$h(x)=\sum_{L\subset\enu{n},\ |L|\geq 1}2^{|L|-1}\delta_{L}[m(s)](\emptyset)\prod_{\ell\in L}x_{\ell}(1-x_{\ell}).$$
Therefore, as explained in \cite{EthierNagylaki89}, we can apply a result of  \cite{FukushimaStroock} 
to deduce that
the diffusion associated with $\mc{G}_{n,s}$ has a unique reversible 
stationary distribution $m_{\mu,s}$ given by 
$$m_{\mu,s}(dx)=C\exp(h(x))m_{\mu,0}(dx),$$ 
where $C$ is chosen so that $m_{\mu,s}$ is a probability distribution. 
\end{proof}
\subsection{Description of the density of the stationary measure}
We analyse the density of the stationary measure under two supplementary assumptions:
\begin{hyp}\label{Hmutsym} The two mutation rates $\mu_0$ and $\mu_1$ are assumed to be equal 
to a strictly positive real number $\mu$.
\end{hyp}
\begin{hyp}\label{Hassortdens}  For every $L\in\mc{P}(\enu{n})$, $m_L(s)$ depends only on $|L|$.
We write $m(\ell)$ for the common value of $m_L(s)$ for $L\in \mc{P}(\enu{n})$ such that $|L|=\ell$.  
\end{hyp}
Assumption \Href{Hassortdens} holds if  the assortment parameters satisfy the Hamming 
criterion or the  additive criterion. 

Under the hypotheses \Href{Hrecombsym}, \Href{Hscaling}, \Href{Hlinkage}, \Href{Hpairing}, 
\Href{Hmutsym} and \Href{Hassortdens}, the density of the invariant measure can be written as
$g_{n,\mu,s}(x)=C\exp(h_{n,\mu,s}(x))$ with  
$$h_{n,\mu,s}({x})= (2\mu-1)\sum_{i=1}^{n}\ln(\rho(x_i))+\sum_{\ell=0}^{n-1}\alpha_{\ell}\sum_{L\subset\enu{n},\ |L|=\ell+1}\prod_{k\in L}\rho(x_k), $$
where  $\rho(x_i)=x_i(1-x_i)$ and  $\alpha_{\ell}=2^{\ell}\delta^{(\ell+1)}[m](0)$.

The study of the invariant measure in the one-locus case already provides a precise image of the 
graph of $g_{n,\mu,s}$ when the $n$ coordinates of the diffusion are independent, that is when the 
assortment coefficients are chosen so that $$\text{for every }\ell\in\{0,\ldots,n-1\},\  
m(\ell+1)-m(\ell)=m(1)-m(0).$$  There are then at least four different types of graph depending 
on the respective contributions to allelic diversity of mutations ($\mu>1/2$ or $0<\mu<1/2$) and  
assortment parameters ($m(1)-m(0)$ smaller than $| 8\mu-4 |$ or not) as shown in Fig.~\ref{loiinv1}. 

Proposition \ref{descriunptcrit} gives conditions on the assortment parameters 
under which $(1/2,\ldots,1/2)$ is the only critical point of the density, as
in the random mating case.   Proposition \ref{descloirev} deals with  situations far from the 
random mating case (the proofs are postponed to \Sref{subsectdproofstation}). 
\begin{prop}\label{descriunptcrit}
We assume that the hypotheses \Href{Hrecombsym}, \Href{Hscaling}, \Href{Hlinkage}, 
\Href{Hpairing}, \Href{Hmutsym} and \Href{Hassortdens} hold.
Set  $V_n=2\mu-1+2^{-(n+1)}\sum_{k=0}^{n-1}\binom{n-1}{k}\delta^{(1)}[m](k)$. 
\begin{enumerate}
 \item If $V_{n}>0$, then $(1/2,\ldots,1/2)$ is a local maximum of $g_{n,\mu,s}$.
\item If $V_{n}<0$, then $(1/2,\ldots,1/2)$ is a local minimum of $g_{n,\mu,s}$.
\item If $\mu>1/2$ and if  $\delta^{(1)}[m](\ell)\geq -(8\mu-4)$ $\forall \ell\in\enum{0}{n-1}$, then $(1/2,\ldots,1/2)$ is a global maximum and is the only critical point of $g_{n,\mu,s}$.  
\item If $0<\mu<1/2$ and if  $\delta^{(1)}[m](\ell)\leq -(8\mu-4)$ $\forall \ell\in\enum{0}{n-1}$, then $(1/2,\ldots,1/2)$ is a global minimum and is the only critical point of $g_{n,\mu,s}$. 
\end{enumerate}
\end{prop}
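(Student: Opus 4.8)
The plan is to work with $h_{n,\mu,s}$ rather than with $g_{n,\mu,s}=C\exp(h_{n,\mu,s})$, since $C>0$ means the two functions have the same critical points and the same local and global extrema. Write $\rho(u)=u(1-u)$, so that $\rho'(u)=1-2u$ and $\rho(1/2)=1/4$. Differentiating the explicit formula for $h_{n,\mu,s}$ and using \eqref{Pgeneral} to recognise the polynomial factor, one obtains
\[
\partial_i h_{n,\mu,s}(x)=\rho'(x_i)\Big(\frac{2\mu-1}{\rho(x_i)}+P_{i,s}(x)\Big),\qquad i\in\enu{n}.
\]
Because $\rho'(1/2)=0$, the point $(1/2,\dots,1/2)$ is a critical point for every value of the parameters; the proposition concerns its nature, so the whole argument will turn on the sign of the bracket and on second-order information at the centre.

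For parts~1 and~2 I would compute the Hessian of $h_{n,\mu,s}$ at $(1/2,\dots,1/2)$. Each off-diagonal entry $\partial_i\partial_j h_{n,\mu,s}$ ($i\neq j$) carries a factor $\rho'(x_i)\rho'(x_j)$ and therefore vanishes at the centre, so the Hessian there is diagonal; by the permutation symmetry granted by \Href{Hassortdens} all diagonal entries are equal. A direct computation gives the common value $-8(2\mu-1)-2\sum_{\ell=0}^{n-1}\binom{n-1}{\ell}2^{-\ell}\delta^{(\ell+1)}[m](0)$, and the shift-operator identity $(1+\tfrac12\Delta)^{n-1}=2^{-(n-1)}(1+E)^{n-1}$ (with $E$ the shift and $\Delta=E-1$) rewrites the sum as $2^{-(n-1)}\sum_{k=0}^{n-1}\binom{n-1}{k}\delta^{(1)}[m](k)$. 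Hence the Hessian equals $-8V_n\,I$, negative definite when $V_n>0$ and positive definite when $V_n<0$, which gives the strict local maximum and the strict local minimum respectively.

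The crux of parts~3 and~4 is that, under \Href{Hassortdens}, the expression \eqref{expresP} displays $P_{i,s}(x)$ as a convex combination of the numbers $\delta^{(1)}[m](\ell)$, $\ell=0,\dots,n-1$: the weights $\prod_{k\in A}(2\rho(x_k))\prod_{\ell\notin A\cup\{i\}}(1-2\rho(x_\ell))$ are nonnegative since $0\le 2\rho\le\tfrac12$, and they sum to $1$ because $\prod_{\ell\neq i}\big((2\rho(x_\ell))+(1-2\rho(x_\ell))\big)=1$. Consequently $P_{i,s}(x)$ lies between $\min_\ell\delta^{(1)}[m](\ell)$ and $\max_\ell\delta^{(1)}[m](\ell)$ throughout the cube. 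Since $\rho(x_i)\le\tfrac14$ gives $\frac{2\mu-1}{\rho(x_i)}\ge 8\mu-4$ when $\mu>1/2$ (strictly unless $x_i=1/2$), the hypothesis of part~3 makes the bracket strictly positive whenever $x_i\neq 1/2$; so every critical point has all coordinates equal to $1/2$, and $(1/2,\dots,1/2)$ is the unique critical point. As $2\mu-1>0$ forces $h_{n,\mu,s}\to-\infty$ at the boundary, the function attains its maximum at this unique interior point, which is therefore the global maximum. Part~4 is the mirror image: $2\mu-1<0$ reverses the inequality for $\frac{2\mu-1}{\rho(x_i)}$ and makes $h_{n,\mu,s}\to+\infty$ at the boundary, so under the hypothesis $\delta^{(1)}[m](\ell)\le-(8\mu-4)$ the same reasoning yields a unique critical point that is the global minimum.

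I expect the main obstacle to be the bookkeeping in parts~3 and~4, namely justifying the convex-combination structure of $P_{i,s}$ from \eqref{expresP} and then tracking the opposite signs of $2\mu-1$ and of $8\mu-4$ so that the two sharp bounds combine to fix the sign of the bracket; the Hessian calculation of parts~1--2 is routine once the difference-operator identity is available.
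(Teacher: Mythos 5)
Your proof is correct, and parts 1--2 coincide with the paper's argument: the Hessian of $h_{n,\mu,s}$ at $(1/2,\ldots,1/2)$ is diagonal because every mixed partial carries the factor $(1-2x_i)(1-2x_j)$, and its common diagonal entry is $-8V_n$ (the paper writes this as $-2\Delta$ with $\Delta=4V_n$); your reduction of $\sum_{\ell}\binom{n-1}{\ell}2^{-\ell}\delta^{(\ell+1)}[m](0)$ to $2^{-(n-1)}\sum_{k}\binom{n-1}{k}\delta^{(1)}[m](k)$ is exactly the content of Lemma~\ref{polydelta}. For parts 3--4 your route differs from the paper's in a genuine, if modest, way. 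The paper first settles the extremal case $\delta^{(1)}[m](\ell)\equiv-(8\mu-4)$ for all $\ell$, where the coordinates decouple and the one-locus picture applies, and then transfers the conclusion to larger (resp.\ smaller) parameters by monotonicity of $\partial_i\bar h$ in each $\delta^{(1)}[m](\ell)$ --- a step it states rather tersely. You instead read off from \eqref{expresP}, under \Href{Hassortdens}, that $P_{i,s}(x)$ is a convex combination of $\delta^{(1)}[m](0),\ldots,\delta^{(1)}[m](n-1)$ (the weights $\prod_{k\in A}2x_k(1-x_k)\prod_{\ell\notin A\cup\{i\}}(1-2x_\ell(1-x_\ell))$ are nonnegative and sum to $1$), and pair the resulting uniform bound with the inequality between $\frac{2\mu-1}{x_i(1-x_i)}$ and $8\mu-4$, which is strict exactly when $x_i\neq 1/2$. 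Both arguments ultimately rest on the same positivity of these weights, but your two-sided bound is self-contained: it avoids the appeal to the independent case and makes the comparison step explicit, at no extra cost. Your final passage from ``unique interior critical point'' to ``global maximum/minimum'' via the divergence of $h_{n,\mu,s}$ at the boundary of the cube is sound and matches what the paper implicitly relies on.
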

\begin{ex}
 Let us consider the additive criterion with the assortment sequence $s_{\ell}=b\ell$ for 
$\ell\in\enum{0}{n}$. Then $\delta^{(1)}[m](\ell)=2^{-\ell}\binom{\ell}{\ell/2}b\un_{\{\ell 
\text{ is even}\}}$. As $2^{-\ell}\binom{\ell}{\ell/2}$ is a strictly decreasing sequence smaller 
than $1$, $b<0$ implies $V_n>2\mu-1+\frac{1}{8}b$. Thus, it follows from 
Proposition~\ref{descriunptcrit} that if $\mu>1/2$ and $b\geq -8(2\mu-1)$, the point 
$(1/2,\ldots,1/2)$ is a local maximum of $g_{n,s,\mu}$. Let us note that if we consider the 
same sequence $s_{\ell}=b\ell$  but with the Hamming criterion, then for $\mu>1/2$ and 
$b<-4(2\mu-1)$, $(1/2,\ldots,1/2)$ is a local minimum of $g_{n,s,\mu}$.  
\end{ex}
\begin{rem}
The statement of Proposition~\ref{descriunptcrit} can be easily extended to a family of 
assortment parameters for which \Href{Hassortdens} does not hold: $V_n$ must be replaced 
by $$V_{n,i}=2\mu-1 +2^{-(n+1)}\sum_{B\subset\enu{n}\setminus \{i\}}\delta_{i}[m(s)](B)$$ for 
every $i\in\enu{n}$ and the conditions on $\delta[m](\ell)$ in assertions 3 and 4 
are replaced by a condition on  $\delta_{i}[m(s)](A)$ for every $i\in\enu{n}$ and 
$A\in \mc{P}(\enu{n}\setminus\{i\})$. 
\end{rem}
The following proposition describes the properties of the critical points of the density in 
two cases, (1) $\mu>1/2$ and a condition on the assortment parameters which strongly favours 
mating between individuals carrying similar types: 
$$\delta^{(1)}[m](n-1)\leq \delta^{(1)}[m](n-2)\leq\ldots \leq  \delta^{(1)}[m](0)\leq 0 \text{ and  } {\delta^{(1)}[m](n-2)<0},$$
and (2) $0<\mu<1/2$ and a condition on the assortment parameters which strongly favours mating 
between individuals with dissimilar types: 
$$\delta^{(1)}[m](n-1)\geq \delta^{(1)}[m](n-2)\geq \ldots \geq  \delta^{(1)}[m](0)\geq 0\text{ and  }{\delta^{(1)}[m](n-2)>0}.$$ 
To simplify the statement, the description is limited to the hypercube $[0,1/2]^{n}$. The description
on the whole space $[0,1]^n$ can be deduced from this since
 $g_{n,\mu,s}(x)$ is invariant if we replace any coordinate $x_i$ with $1-x_i$. 
\begin{prop}\label{descloirev}
Assume that conditions \Href{Hrecombsym}, \Href{Hscaling}, \Href{Hlinkage}, 
\Href{Hpairing}, \Href{Hmutsym} and \Href{Hassortdens} hold.  Set 
$$V_n=2\mu-1+2^{-(n+1)}\sum_{k=0}^{n-1}\binom{n-1}{k}\delta^{(1)}[m](k).$$
\begin{enumerate}
\item \underline{Case $\mu>1/2$}. Assume furthermore that:\\
 $\delta^{(1)}[m](n-1)\leq \delta^{(1)}[m](n-2)\leq\ldots \leq  \delta^{(1)}[m](0)\leq 0$ and  ${\delta^{(1)}[m](n-2)<0}$. 
 \begin{enumerate}
 \item If $V_n > 0$ then $(1/2,\ldots,1/2)$ is a global maximum and is the only critical point of the density $g_{n,\mu,s}$. 
\item If $V_n < 0$ then 
\begin{enumerate}
 \item $g_{n,\mu,s}$ has a local minimum at $(1/2,\ldots,1/2)$. 
\item In $[0,1/2]^n$, $g_{n,\mu,s}$ takes its maximum value at a unique point of the form $(\xi_0,\ldots,\xi_0)$.
\item The other critical points of $g_{n,\mu,s}$ in $[0,1/2]^n$ are saddle points: for every \linebreak[4]${\ell\in\enu{n-1}}$,  $g_{n,\mu,s}$ has $\binom{n}{\ell}$ saddle points of index $n-\ell$ in $[0,1/2]^n$. The saddle points of index $n-\ell$ have $\ell$ coordinates equal to $1/2$ and the other coordinates have the same value  denoted by $\xi_{\ell}$.
\item The relative positions of the coordinates of the critical points in $[0,1/2]^n$  satisfy  $0<\xi_{n-1}<\cdots<\xi_0<1/2$. 
\item The value of $g_{n,\mu,s}$ is the same at any saddle point of index $n-\ell$ and decreases as $\ell$ increases. 
\end{enumerate}
\end{enumerate}
\item \underline{Case $0< \mu < 1/2$}. Assume furthermore that:
 $$\delta^{(1)}[m](n-1)\geq \delta^{(1)}[m](n-2)\geq \ldots \geq  \delta^{(1)}[m](0)\geq 0\text{ and  }{\delta^{(1)}[m](n-2)>0}.$$ 
 \begin{enumerate}
 \item If $V_n < 0$ then $(1/2,\ldots,1/2)$ is a global minimum and is the only critical point of the density $g_{n,\mu,s}$. 
\item If $V_n > 0$ then 
\begin{enumerate}
\item $g_{n,\mu,s}$ has a local maximum at $(1/2,\ldots,1/2)$. 
\item In $[0,1/2]^n$, $g_{n,\mu,s}$ takes its minimum value at a unique point of the form \linebreak[4]$(\xi_0,\ldots,\xi_0)$.
\item The other critical points of $g_{n,\mu,s}$ in $[0,1/2]^n$ are saddle points: for every \linebreak[4]$\ell\in\enu{n-1}$,  $g_{n,\mu,s}$ has $\binom{n}{\ell}$ saddle points of index $\ell$ in $[0,1/2]^n$. The saddle points of index $\ell$ have $\ell$ coordinates equal to $1/2$ and the other coordinates have the same value  denoted by $\xi_{\ell}$.
\item The relative positions of the coordinates of the critical points in $[0,1/2]^n$  satisfy  $0<\xi_{n-1}<\cdots<\xi_0<1/2$. 
\item The value of $g_{n,\mu,s}$ is the same at any saddle point of index $n-\ell$ and increases as $\ell$ increases. 
\end{enumerate}
\end{enumerate}
\end{enumerate}
\end{prop}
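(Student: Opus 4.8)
The plan is to locate and classify the critical points of the exponent $h_{n,\mu,s}$, since $g_{n,\mu,s}=C\exp(h_{n,\mu,s})$ has exactly the same critical points with the same Morse indices. By Assumptions~\Href{Hmutsym} and \Href{Hassortdens} the exponent is invariant under permutations of the coordinates and under each reflection $x_i\mapsto 1-x_i$, so it suffices to work on $[0,1/2]^n$ and to transport the conclusions to $[0,1]^n$ by symmetry. Writing $\rho(x)=x(1-x)$, differentiating the expression for $h_{n,\mu,s}$ recorded before Proposition~\ref{descriunptcrit} (using the form of $H_{n,s}$ from Proposition~\ref{existinv} together with the difference-operator identity \eqref{expdiffset}) produces the factorisation
\[ \frac{\partial h_{n,\mu,s}}{\partial x_i}(x)=(1-2x_i)\,\Phi_i(x),\qquad \Phi_i(x)=\frac{2\mu-1}{\rho(x_i)}+\EE[\delta^{(1)}[m](N_i)], \]
where $N_i$ is the cardinality of a random subset of $\enu{n}\setminus\{i\}$ that contains each $k$ independently with probability $2\rho(x_k)\in[0,1/2]$. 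Thus $x$ is critical precisely when, for every $i$, either $x_i=1/2$ or $\Phi_i(x)=0$. This probabilistic representation is the engine of the proof: the monotonicity hypotheses on $\delta^{(1)}[m]$ translate, through the stochastic monotonicity of $N_i$ in the coordinates, into sign and monotonicity statements for $\Phi_i$.

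I would first show that at any critical point the coordinates different from $1/2$ (the \emph{free} coordinates) are all equal. For two free coordinates $i,j$ one computes $\Phi_i-\Phi_j=(\rho(x_j)-\rho(x_i))B_{ij}$; inserting the critical equation $\Phi_i=0$ into $B_{ij}$ cancels all the second-difference terms and leaves $B_{ij}=-\rho(x_j)^{-1}\EE[\delta^{(1)}[m](M)]$, with $M$ counting a random subset of the remaining coordinates. Under the Case~1 hypotheses $\delta^{(1)}[m]\le 0$ with $\delta^{(1)}[m](n-2)<0$ this is strictly positive, forcing $\rho(x_i)=\rho(x_j)$ and hence $x_i=x_j$ (both lie in $(0,1/2)$, where $\rho$ is injective); in Case~2 the sign reverses but the conclusion is the same. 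Consequently every critical point has $\ell$ coordinates equal to $1/2$ and the other $n-\ell$ equal to a common value, and we are reduced to the single-variable equation $\Phi(t;\ell)=0$ for that common value. Since $t\mapsto\rho(t)$ increases on $(0,1/2)$ and $N$ is stochastically increasing in each coordinate, $\Phi(\cdot;\ell)$ is strictly decreasing; moreover $\Phi(t;\ell)\to+\infty$ as $t\to0^+$, while a direct evaluation gives $\Phi(1/2;\ell)=4V_n$ for every $\ell$. Hence in the (a) regimes $\Phi_i$ keeps the sign of $V_n$ throughout $[0,1/2]^n$ (its extreme value being exactly $4V_n$), so no coordinate can be freed and $(1/2,\dots,1/2)$ is the only critical point, whose nature is given by Proposition~\ref{descriunptcrit}; in the (b) regimes there is, for each $\ell\in\{0,\ldots,n-1\}$, a unique root $\xi_\ell\in(0,1/2)$.

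It then remains to organise these solutions. The ordering $0<\xi_{n-1}<\cdots<\xi_0<1/2$ follows because freezing one more coordinate at $1/2$ stochastically enlarges $N$, hence (Case~1) pushes $\Phi(\cdot;\ell)$ down pointwise and moves its unique root towards $0$. For the index I would use that the Hessian of $h_{n,\mu,s}$ at such a point is block-diagonal: each coordinate frozen at $1/2$ contributes the single eigenvalue $-2\Phi_a$ (because $(1-2x_i)\Phi_i$ vanishes to first order there), while the $n-\ell$ free coordinates contribute a block whose radial eigenvalue has the sign of the second derivative of $h_{n,\mu,s}$ along the diagonal of the free block and whose $n-\ell-1$ transverse eigenvalues have the sign of $-B_{ij}$. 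The sign computations of the previous paragraph make the free block negative definite, so the index is $n-\ell$ \emph{precisely when} the frozen eigenvalues $-2\Phi_a$ are positive. Establishing this last point is the step I expect to be most delicate: it fails for a general configuration and holds only because the frozen coordinates sit at $1/2$ and the free ones at the genuine root $\xi_\ell$; it is proved by substituting the defining relation $\Phi(\xi_\ell;\ell)=0$ (which fixes $\rho(\xi_\ell)$) into $\Phi_a(1/2)$ and using $\rho(\xi_\ell)<1/4$, i.e.\ $V_n<0$. Finally, comparing the values of $g_{n,\mu,s}$ at the critical points reduces, by the same diagonal substitution, to showing that this value decreases (Case~1) as the number $\ell$ of frozen coordinates grows, which identifies $(\xi_0,\ldots,\xi_0)$ as the global maximum and yields the remaining assertions; Case~2 follows by the identical argument with all inequalities reversed.
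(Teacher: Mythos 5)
Up to the point where you compare the values of the density at the different critical points, your argument is correct and is essentially the paper's proof in probabilistic clothing: your representation $\Phi_i(x)=\tfrac{2\mu-1}{\rho(x_i)}+\EE\big[\delta^{(1)}[m](N_i)\big]$ is the paper's expansion of $\partial_i\bar h$, and the identities you use (the cancellation giving $B_{ij}=-\rho(x_j)^{-1}\EE[\delta^{(1)}[m](M)]$, the evaluation $\Phi(1/2;\ell)=4V_n$, the monotonicity of $\Phi(\cdot;\ell)$ in $t$ and in $\ell$, and the substitution of the critical equation yielding $\Phi_a=(1-4\lambda_\ell)\,\EE[\delta^{(1)}[m](M)]<0$ for the frozen eigenvalues) are exactly Lemmas~\ref{ESP} and~\ref{polydelta} and the computations of $\phi_\ell(1/4)$, $\phi'_\ell$, $\phi_{\ell+1}-\phi_\ell$ and $a_\ell,b_\ell,c_\ell$ in the paper, rewritten as expectations over a sum of independent Bernoulli variables. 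That repackaging is pleasant and makes the sign analysis transparent, but it is the same argument.

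The genuine gap is assertion $1.(b).v$ (and its mirror $2.(b).v$): that the common value of $g_{n,\mu,s}$ at the saddle points of index $n-\ell$ is strictly decreasing in $\ell$. You dispose of this in one clause --- ``comparing the values \dots reduces, by the same diagonal substitution, to showing that this value decreases as $\ell$ grows'' --- which restates the claim rather than proving it. The difficulty is that $p_\ell$ and $p_{\ell+1}$ are determined by the two different implicit equations $(\mc{E}_\ell)$ and $(\mc{E}_{\ell+1})$, so substituting the critical relations into $h_{n,\mu,s}$ does not produce a sign-definite difference; the explicit formulas for $h_{n,0}-h_{n,1}$ in Appendix~\ref{annexexquadraticassort} show how little cancels. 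The paper handles this step by a qualitatively different device: it runs the gradient flow of $-h_{n,\mu,s}$, observes that the slice $U^{(k)}_M=\{x:\ x_1=\cdots=x_k=1/2,\ x_i<1/2\ \forall i>k\}$ is positively invariant (because $\partial_i h$ vanishes on $\{x_i=1/2\}$) and contains the single critical point $p_k$, and then exhibits, via the sign of $\partial_{k+1}\bar h$ at $p_{k+1}$, a point of $U^{(k)}_M$ arbitrarily close to $p_{k+1}$ at which $h$ already exceeds $h(p_{k+1})$; following the flow from there forces $h(p_k)>h(p_{k+1})$. Some such argument (or an honest computation) is needed in your write-up. Note that assertion $(ii)$ does not actually require $(v)$ --- once the index computation shows that every $p_\ell$ with $\ell\geq 1$ has an unstable direction pointing into $[0,1/2]^n$, the maximum over $[0,1/2]^n$ can only be attained at $p_0$ --- but the monotone ordering of the critical values in $(v)$ does not follow from anything you have established.
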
 
\begin{rem} \label{remptcrit}\
 \begin{enumerate}
\item 
 $\xi_0=1/2 - 1/2\sqrt{1-4\lambda_0}$ where $\lambda_0$ is the unique solution in $]0,1/4[$ of the equation:
\begin{equation}
 2\mu-1+x\sum_{i=0}^{n-1}\delta^{(1)}[m](i)\binom{n-1}{i}(2x)^{i}(1-2x)^{n-1-i}=0 \label{eqcrit}\tag{$\mc{E}_0$}
\end{equation}
More generally, for every $\ell\in\enum{0}{n-1}$, 
 $\xi_\ell=1/2 - 1/2\sqrt{1-4\lambda_\ell}$ where $\lambda_\ell$ is the unique solution in $]0,1/4[$ of the equation:
\begin{equation}
 2\mu-1+x\sum_{i=0}^{n-1}B_{n-1,\ell,i}(2x)\delta^{(1)}[m](i)=0 \label{eqcritn}\tag{$\mc{E}_\ell$}
\end{equation}
and $\displaystyle{B_{n,\ell,i}(x)=2^{-\ell}\sum_{j=\max(0,i-n+\ell)}^{\min(i,\ell)}\binom{\ell}{j}\binom{n-\ell}{i-j}x^{i-j}(1-x)^{n-\ell-(i-j)}}$.\\
Let us note that $(B_{n,\ell,i}(x))_{i=0,...,n}$ are positive on $]0,1[$ and their sum is equal to 1. 
\item The assumption that $\delta^{(1)}[m](i)$ is a decreasing function of $i$ cannot be removed since one can find examples of assortment parameters satisfying $\delta^{(1)}[m](i)<0$ for every ${i\in\enum{0}{n-1}}$ and such that:
\begin{itemize}
 \item[(a)] 
 $\mu>1/2$,  $V_{n}>0$, but  $(1/2,\ldots,1/2)$ is not the only local maximum, 
\item[(b)] $V_n<0$ and 
$g_{n,\mu,s}$ has more  than $2^n$  local maxima.
\end{itemize}
\item
If $x_i$ is the proportion of the population with allele  0 at the $i$-th locus, 
$2x_i(1-x_i)$ is the probability that two individuals sampled at random from the population
carry different alleles at the $i$th locus.
The density function of the reversible measure takes its maximum value at a point $x$ such that for each $i\in\enu{n}$, $x_i(1-x_i)=\lambda_0$. 
\end{enumerate}
\end{rem}
\begin{ex}\label{exquadraticassort}Let us consider a quadratic sequence of parameters  $s_{\ell}=s_0-(b\ell+c\ell^2)$ ${\forall \ell\in\enum{0}{n}}$ and let us define the assortment with this sequence by  means of the  Hamming criterion. 
If $c>0$, $b+c\geq 0$ and $\mu>1/2$ then $g_{n,\mu,s}$ has $3^n$ critical points if and only if $b+nc > 8\mu-4$. 
In this case, $\lambda_0=n^{-1/2}\sqrt{\frac{2\mu-1}{4c}}+O(n^{-1})$. If $h_{n,k}$ denotes the value of the function $h_{n,\mu,s}$ at a critical point of index $n-k$ then   $h_{n,0}-h_{n,n}\underset{n\rightarrow +\infty}{\sim} \frac{c}{8}n^2$ and $h_{n,0}-h_{n,1}\underset{n\rightarrow+\infty}{\sim} n^{1/2}1/2\sqrt{c(2\mu-1)}$ (see Appendix \ref{annexexquadraticassort} for more details). 
\end{ex}
%
\subsection{Graphs of the density and simulations of trajectories in the two and three locus cases}
Figures~\ref{figdensity020406} to \ref{figdens2continuum} show graphs of the  density of the 
reversible stationary measure in the two-locus case for $\mu=0.6$ and for several values of
$s_1-s_0$ and $s_2-s_1$, the assortative mating being defined by the  Hamming distance. 
Figures~\ref{figdensity020406} and~\ref{figdensity0226} illustrate the two  situations 
considered in Proposition~\ref{descloirev} when $\mu>1/2$.  When $s_1-s_0=0$, the  density may 
have  a continuum of critical points as in Fig.~\ref{figdens2continuum}; this corresponds to 
a case in which the assumption $\delta^{(1)}[m](n-2)<0$ of Proposition~\ref{descloirev} 
is not satisfied. 

To illustrate the evolution of the $0$-allelic frequency when $\mu>1/2$ and the assortative mating 
strongly favours pairing between similar types,  simulations were run in a population of size 
$N=10^3$ with the two-locus model (Fig.~\ref{figsimul2}) and with the three-locus model 
(Fig.~\ref{figsimul3}). For these simulations, every individual initially carries the allele $0$ 
at every locus, recombination occurs independently at each locus and the assortative mating is defined by the Hamming criterion. The trajectory is plotted at intervals of size $N$ between the iterations $N^2$ 
and $33N^2$.  To help to visualize the evolution, the colour of the plot changes every 
$\frac{1}{2}N^2$ iterations.
The form of the density of the stationary measure here is highly reminiscent of that of the fitness 
landscapes studied in the adaptive evolution literature in modelling
additive traits under frequency dependent intraspecific competition, 
see e.g. \cite{schneider2007}
and references therein.  In the deterministic setting the existence of
multiple `long term equilibria' renders the behaviour of the system very sensitive to assumptions
about the initial conditions.  In our setting, the presence of genetic drift is sufficient for the
population to (eventually) explore the neighbourhoods of all the maxima, irrespective of its 
starting point. The time spent by the population in the neighbourhood of a maximum depends on the assortment parameters (Fig.~\ref{figsimul3}a and \ref{figsimul3}b). 
\begin{figure}[htb]
\begin{minipage}[t]{.46\linewidth}
\centering \includegraphics[scale=0.4]{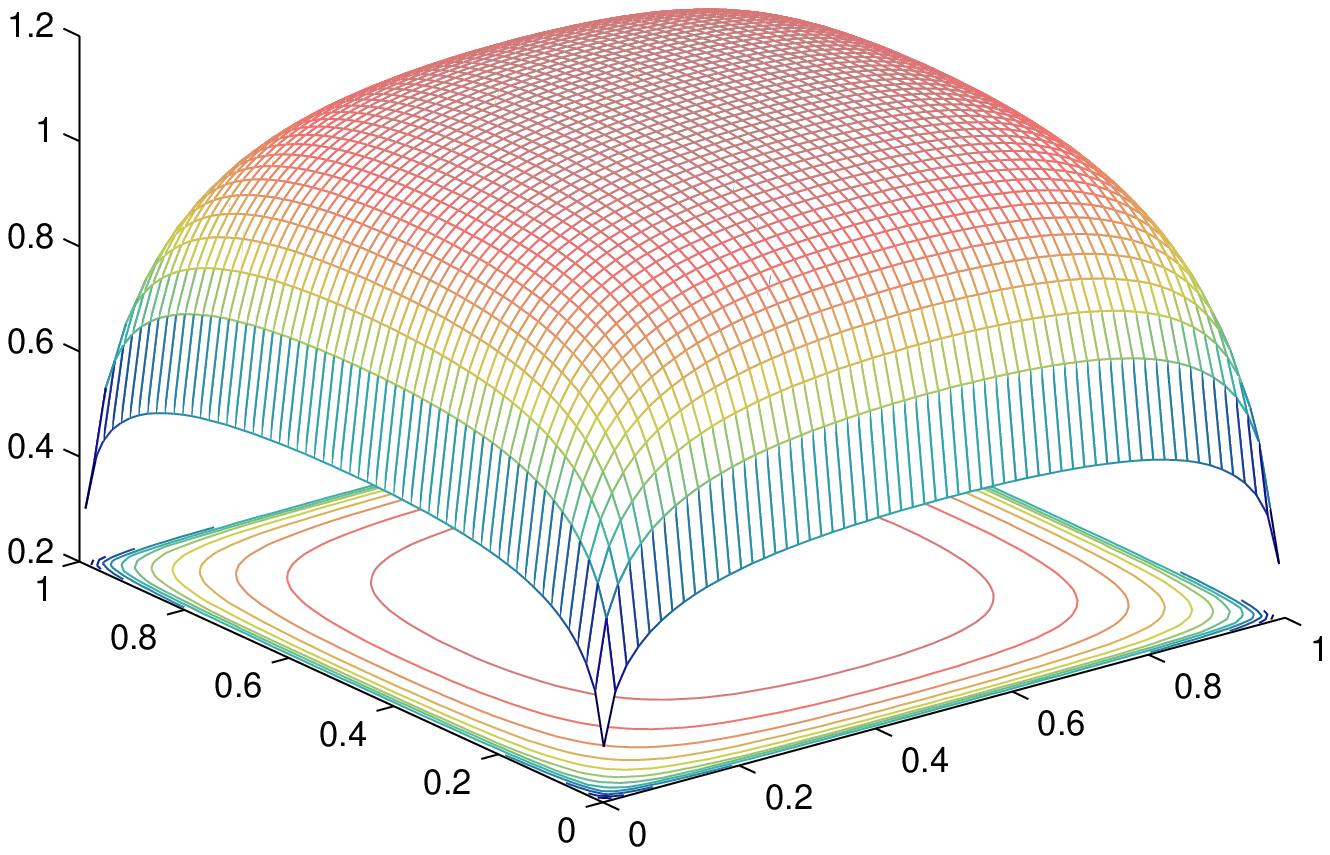}
\caption{Graph of $g_{2,\mu,s}$ when $\mu=0.6$, $s_1-s_0=-0.4$ and $s_2-s_1=-0.6$ so that the point $(1/2,1/2)$ is the only critical point of the density $g_{2,s,\mu}$. \label{figdensity020406}}
\end{minipage}\hfill
\begin{minipage}[t]{.46\linewidth}
\centering \includegraphics[scale=0.4]{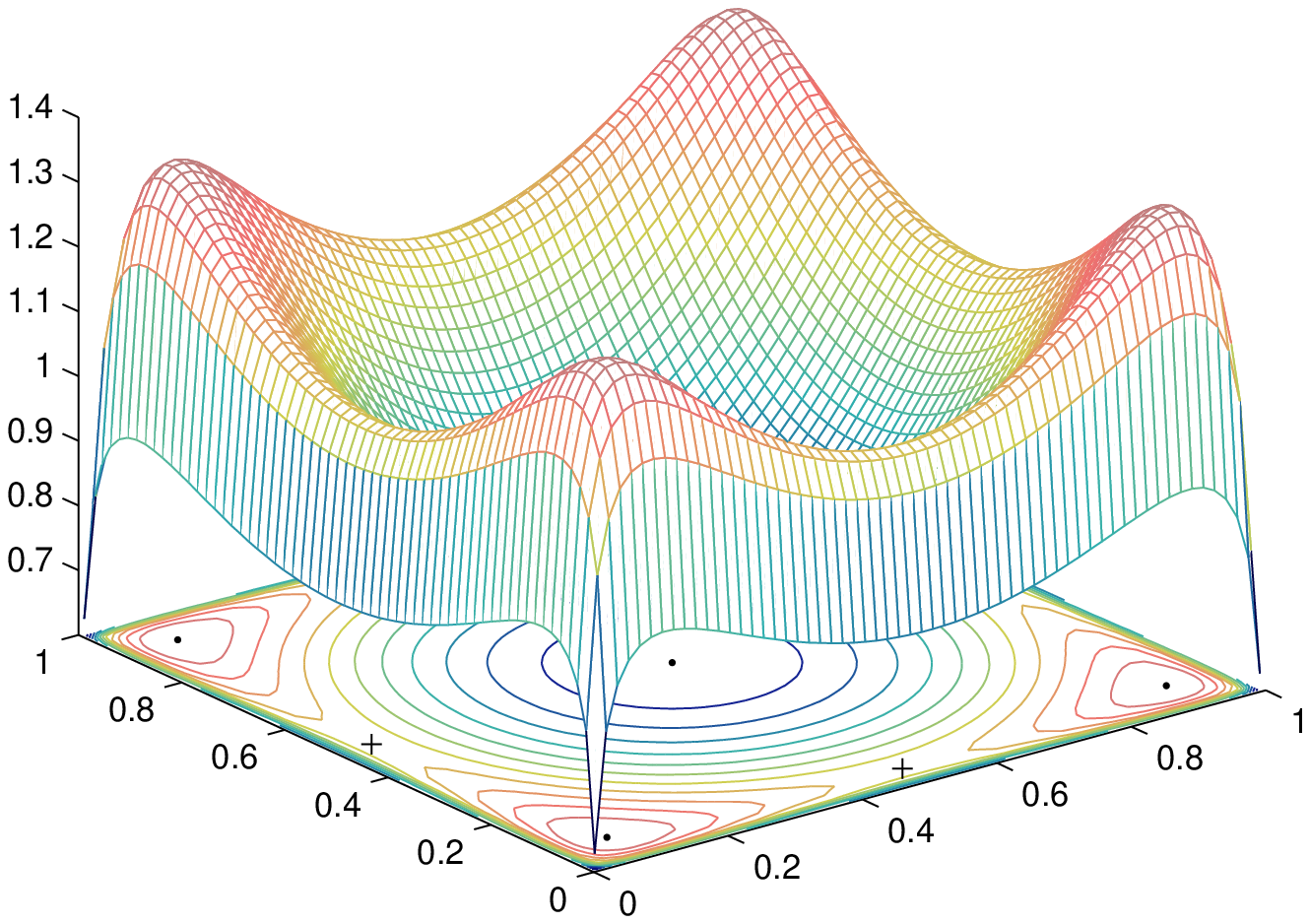}
\caption{Graph of $g_{2,\mu,s}$ when $\mu=0.6$, $s_1-s_0=-2$ and $s_2-s_1=-6$ so that $\lambda_0\simeq 0.0766$.  A black dot marks the position of each extremum and  a cross is plotted at each saddle point.\label{figdensity0226}}
\end{minipage}
\end{figure}
\begin{figure}[htb]
\begin{minipage}[t]{.46\linewidth}
\centering
\includegraphics[scale=0.4]{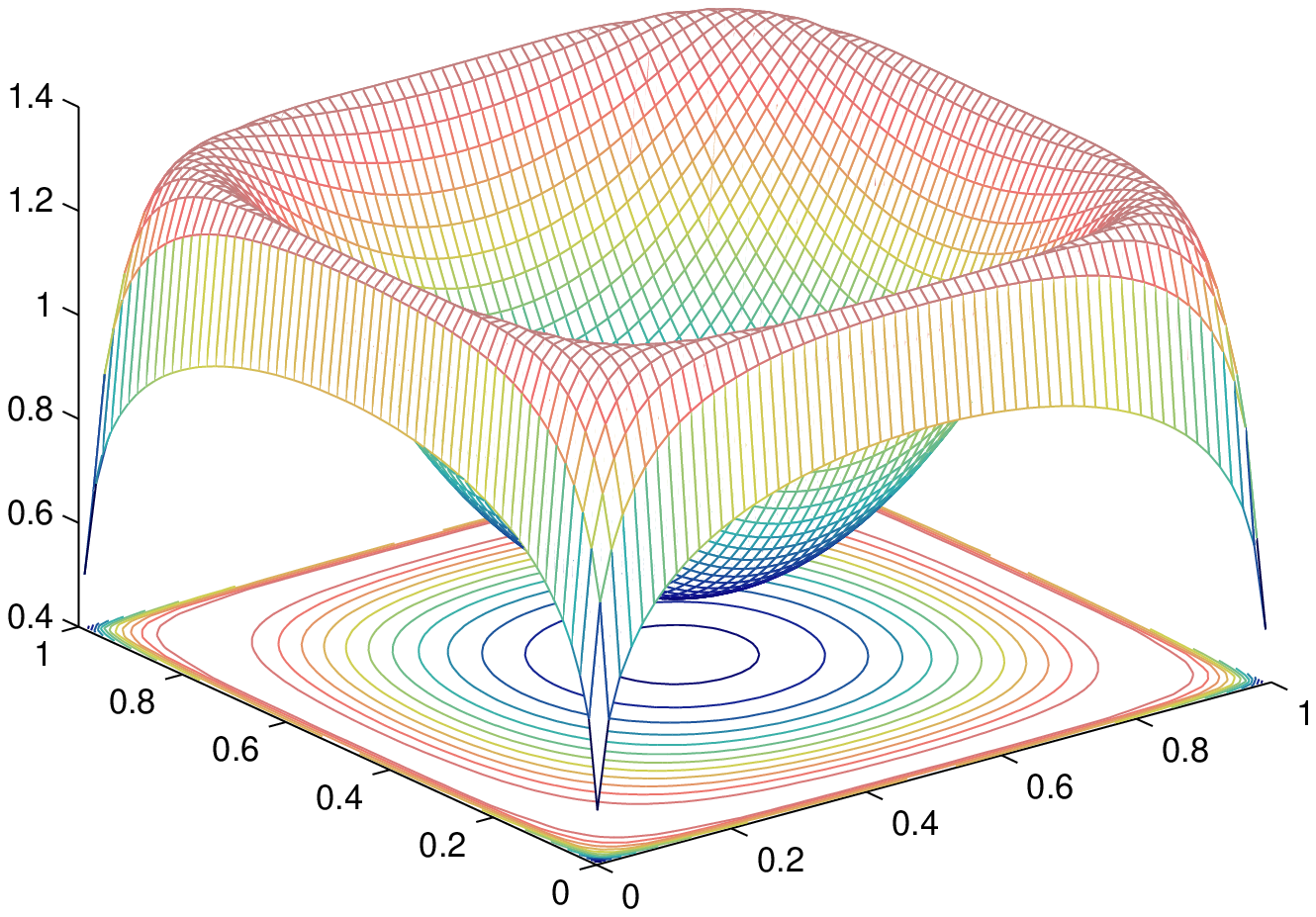}
\caption{Graph of $g_{2,\mu,s}$ when $\mu=0.6$, $s_1-s_0=0$ and $s_2-s_1=-12$; there is a continuum of critical points.\label{figdens2continuum} }
\end{minipage}\hfill
\begin{minipage}[t]{.46\linewidth}
\centering \includegraphics[scale=0.2]{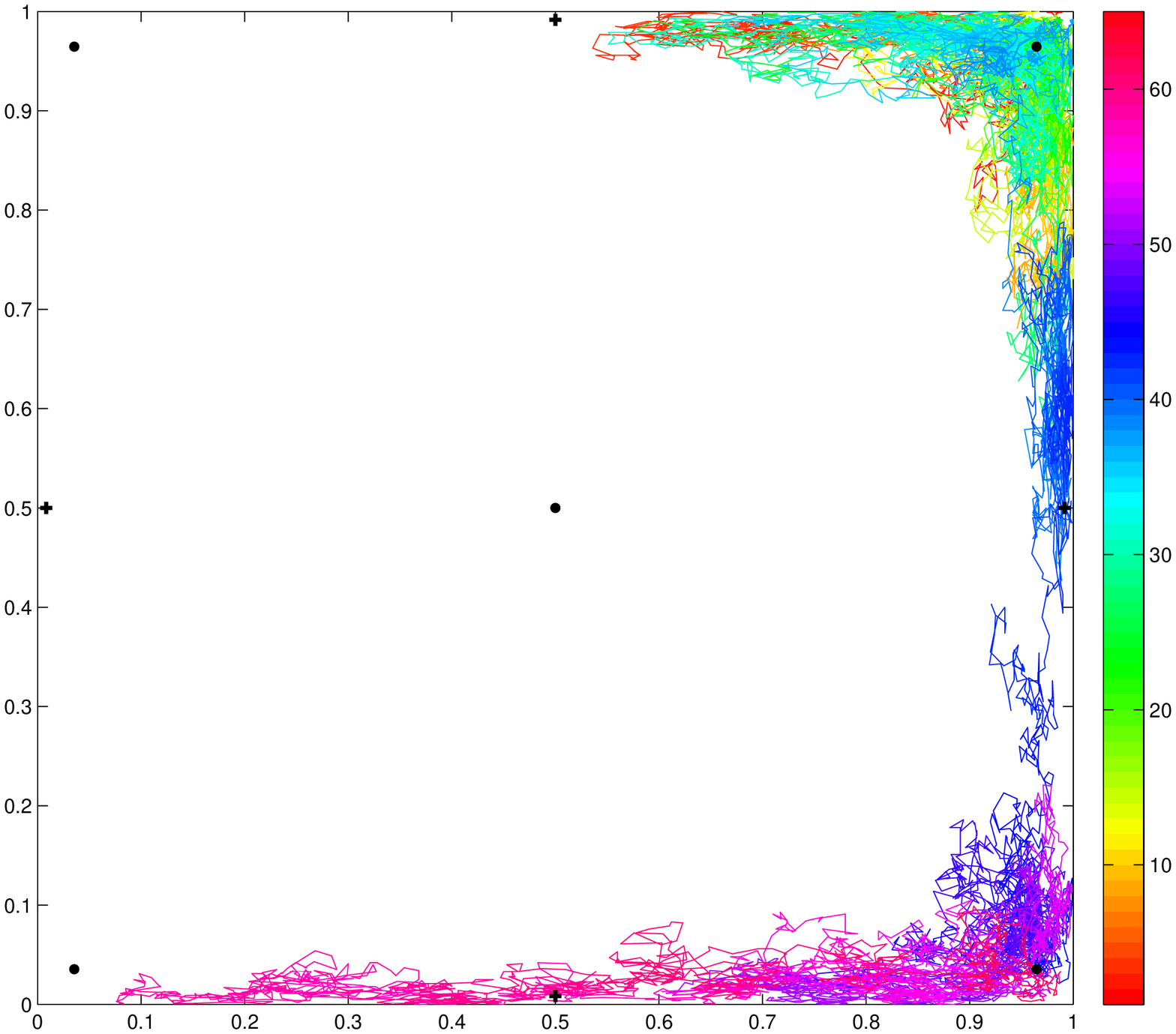}
\caption{Simulation of the evolution of the 0-allelic frequency in the two-locus model.
The population size is $N=10^3$, $\mu=1$, $s_1-s_0=-15$, $s_2-s_1=-210$.  A black dot marks the position of each extremum and  a cross is plotted at each saddle point. In this example,  $\lambda_0\simeq 0.034$  and $\lambda_1\simeq 0.008$. \label{figsimul2} }
\end{minipage}\hfill
\end{figure}
\begin{figure}[h]
\begin{minipage}[h]{0.55\linewidth}
\includegraphics[scale=0.55]{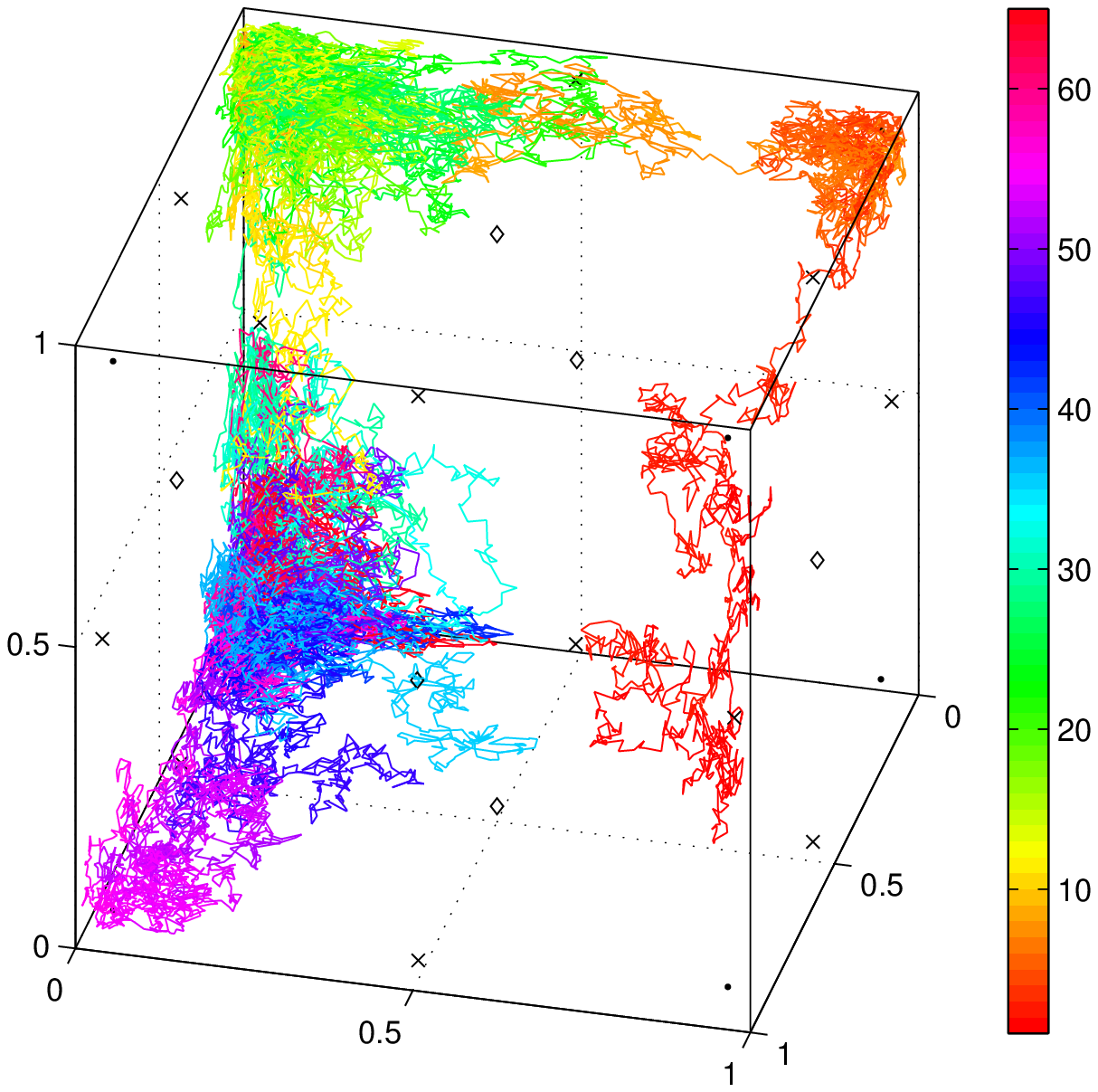}
\end{minipage}
\begin{minipage}[h]{0.45\linewidth}
 (a) \\
Assortment parameters: $s_1-s_0=-20$, $s_2-s_1=-40$ and $s_3-s_2=-60$.\medskip\\
Characteristics of the stationary density: 
$\lambda_0\simeq 0.043$, $\lambda_1\simeq 0.031$  and $\lambda_2\simeq 0.025$.\\
$h_{0}-h_{1}=7.9$, $h_{0}-h_{2}\simeq 24.3$ and ${h_{0}-h_{3}\simeq 49.8}$. 
\end{minipage}
\begin{minipage}{0.55\linewidth}
\includegraphics[scale=0.55]{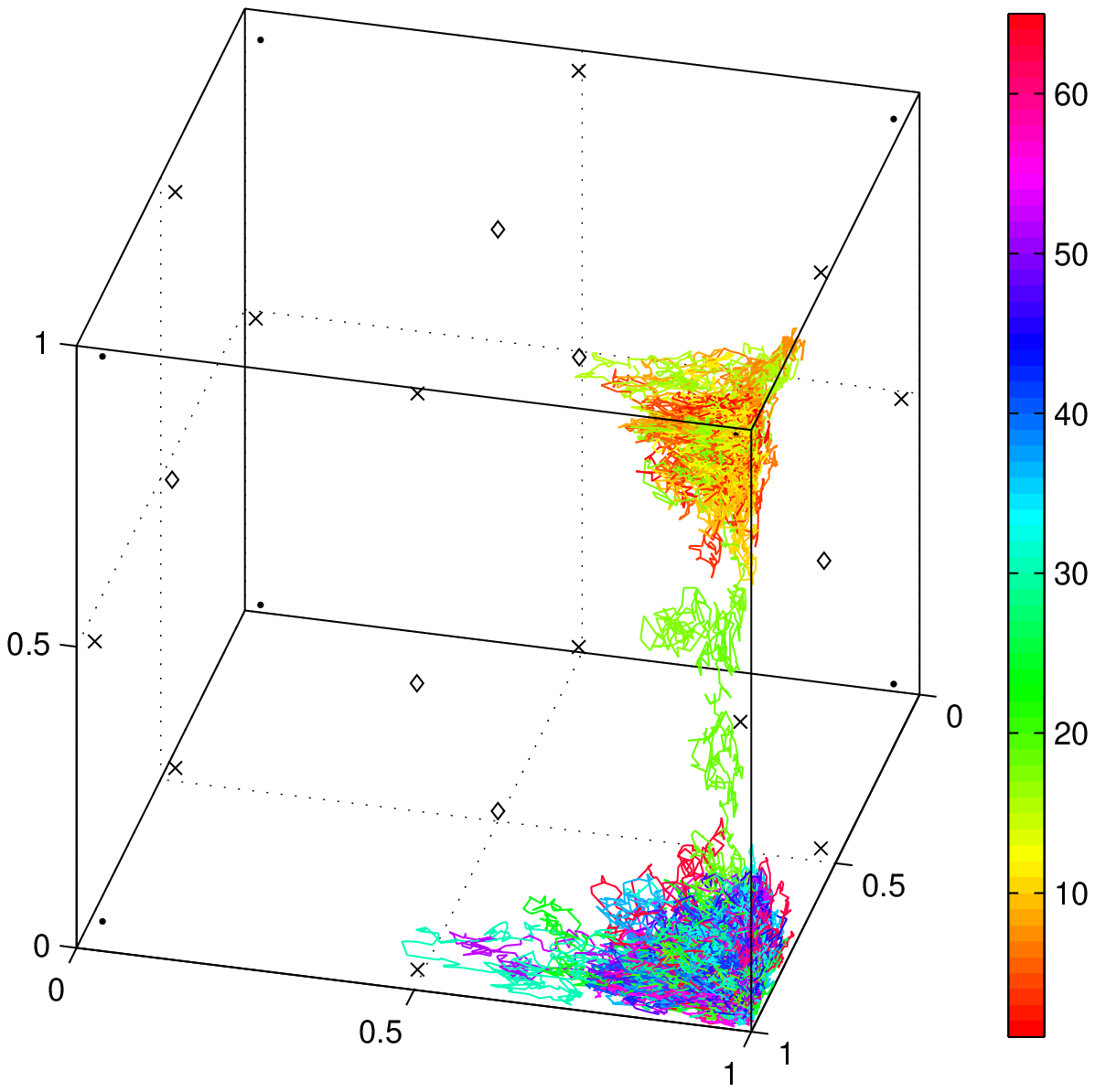}
\end{minipage}
\begin{minipage}{0.45\linewidth}
(b) \\
Assortment parameters:  $s_1-s_0=-30$, $s_2-s_1=-60$ and $s_3-s_2=-90$.
\medskip\\
Characteristics of the stationary density: 
$\lambda_0\simeq 0.030$, $\lambda_1\simeq 0.021$  and $\lambda_2\simeq 0.017$.\\
$h_{0}-h_{1}=12.6$, $h_{0}-h_{2}\simeq 38.6$ and ${h_{0}-h_{3}\simeq 78.7}$.
\end{minipage}

\caption{
Simulations of the evolution of the 0-allelic frequency  with the three-locus model  for two different sets of assortment parameters. The assortative mating favours more strongly pairing between similar types in Fig. \ref{figsimul3}b. The size of the population is  $N=10^3$ and the mutation rate is $\mu=1$.  A black dot marks the position of each global maximum of the stationary density,  a cross the position of each saddle point of index 2 and  a diamond the position of each saddle point of  index 1. Some numerical characteristics of the stationary density are presented to the right of each figure: for $i\in\{1,2,3\}$, the value of $\lambda_i=\xi_i(1-\xi_i)$ provides the position of the critical points of index $3-i$ (see Proposition \ref{descloirev}) and $h_{i}$ is the value of the log-density $h_{n,\mu,s}$ at a critical point of index $3-i$.   \label{figsimul3}
}
\end{figure}
\clearpage
\subsection{Proofs of Propositions~\ref{descriunptcrit} and~\ref{descloirev}\label{subsectdproofstation}}
\paragraph{Proof of Proposition~\ref{descriunptcrit}}
Let us introduce some notation in order to shorten the expressions. 
We set $\nu=2\mu-1$, $\rho(u)=u(1-u)$ for $u\in[0,1]$,  $\bb{\rho}(x)=(\rho(x_1),\ldots,\rho(x_n))$,
$$\bar{h}(x)=\nu\sum_{i=1}^{n}\log(x_i)+\frac{1}{2}\sum_{\ell=1}^{n}(m(\ell)-m(0))\sum_{L\subset\enu{n},\ |L|=\ell\ }\prod_{j\in L}(2x_{j})\prod_{k\in\enu{n}\setminus L}(1-2x_{k})$$
and $h(x)=\bar{h}(\bb{\rho}(x))$ for $x=(x_1,\ldots,x_n)\in]0,1[^n$. 
With this notation, $g_{n,\mu,s}(x)=C_{n,\mu,s}\exp(h(x))$. 
\begin{enumerate}
 \item For every $x\in]0,1[^n$ and $i\in\enu{n}$,  $\partial_{i}h(x)=(1-2x_i)\partial_{i}\bar{h}(\bb{\rho}(x))$
where $$\partial_{i}\bar{h}(x)=\frac{\nu}{x_i}+\sum_{\ell=0}^{n-1}\delta^{(1)}[m](\ell)\sum_{L\subset\enu{n}\setminus\{i\},|L|=\ell\ }\prod_{j\in L}(2x_j)\prod_{k\in\enu{n}\setminus (L\cup\{i\})}(1-2x_{k}). $$
First, the point $u_n=(1/2,\ldots,1/2)$ is a critical point of  $h_{n,\mu,s}$ and the Hessian matrix at this point is the diagonal matrix 
$-2\Delta I_n$ where  $$\Delta=4\nu+2^{-(n-1)}\sum_{i=0}^{n-1}\binom{n-1}{i}\delta^{(1)}[m](i)=4V_n.$$
This proves the first two assertions of the proposition. 
\item The last two assertions follow from the fact that $\partial_{i}\bar{h}(x)$ and $\Delta$ are increasing  
functions of $\delta^{(1)}[m](\ell)$ for every $\ell$.  Let us prove assertion 3 to illustrate the method. First, if $\delta^{(1)}[m(s)](\ell)=-(8\mu-4)$ for every $\ell\in \enum{0}{n-1}$ then the $n$ coordinates of the diffusion are independent. In this case,   $\Delta=0$ and the stationary density has only one critical point at $(1/2,\ldots,1/2)$ which is  a maximum.   If $\{s_{\bb{i},\bb{j}}, (\bb{i},\bb{j})\in\mc{A}^2\}$ is a family of 
assortment parameters such that $\partial_{i}\bar{h}(x)$ is nonnegative for every $x\in]0,1/4]^n$ and the density $g_{n,s,\mu}$ has a unique critical point at $(1/2,\ldots,1/2)$ which is a maximum, then  the same is true for any  family of assortment parameters 
$\{\hat{s}_{\bb{i},\bb{j}}, (\bb{i},\bb{j})\in\mc{A}^2\}$ such that 
$\delta^{(1)}[m(\hat{s})](\ell)\geq \delta^{(1)}[m(s)](\ell)$ for every $\ell\in\enum{0}{n-1}$. 
\end{enumerate}
\paragraph{Proof of Proposition~\ref{descloirev}}
We retain the notation introduced in the proof of Proposition~\ref{descriunptcrit}. 
For $k\in\enu{n}$, we set $\alpha_{k}=2^{k}\delta^{(k+1)}[m](0)$ and denote by $e_{n,k}$ the 
elementary symmetric polynomial function in $n$ variables of degree $k$:   
$$e_{n,0}(x)=1 \text{ and }e_{n,k}(x)=\sum_{{L\subset\enu{n},}\atop {|L|=k}}\prod_{\ell\in L}x_{\ell}\quad \text{ for } k\in\enu{n}.$$
For instance, $e_{n,1}(x)=x_1+\ldots+x_n$, $e_{n,2}(x)=\sum_{1\leq i<j\leq n}x_ix_j$.\\
With this notation 
$$\bar{h}(x)=\nu\sum_{i=1}^{n}\ln(x_i)+\sum_{\ell=0}^{n-1}\alpha_\ell e_{n,\ell+1}(x). $$ 
In the proof we shall use (several times) the following identity for elementary symmetric 
polynomial functions:
\begin{lem}\label{ESP}
Let  $n$ be an integer greater than $1$ and let $k\in\enum{0}{n-2}$. For every $x\in\R^n$, set  $\hat{x}^{(i)}=(x_1,\ldots,x_{i-1},x_{i+1},\ldots,x_n)$ for  $i\in\enu{n}$ and $$\hat{x}^{(i,j)}=\hat{x}^{(j,i)}=(x_1,\ldots,x_{i-1},x_{i+1},\ldots,x_{j-1},x_{j+1},\ldots,x_n) \text{ for  }i,j\in\enu{n} \text{ such that }i<j.$$ Then, 
\begin{equation}
 x_{i}e_{n-1,k}(\hat{x}^{(i)})-x_{j}e_{n-1,k}(\hat{x}^{(j)})=(x_i-x_j)e_{n-2,k}(\hat{x}^{(i,j)}). 
\end{equation}
\end{lem}
We shall also use the following alternative expression for symmetric polynomial functions that 
are similar to the polynomial term in $h$:
\begin{lem}
\label{polydelta}
Let $n\in\NN^*$ and let $a_0,\ldots,a_n$ be real numbers. Then for every $x\in\R^n$,  
$$
\sum_{k=0}^{n}2^{k}\delta^{(k)}[a](0)e_{n,k}(x)=\sum_{i=0}^{n}a_i\sum_{I\subset \enu{n},\ |I|=i\ }\prod_{i\in I}2x_i\prod_{j\not\in I}(1-2x_j).
$$
In particular, for every $y\in \R$ and $\ell\in\enum{0}{n}$, $$\sum_{k=0}^{n}2^k\delta^{(k)}[a](0)e_{n,k}((1/4)^{\otimes \ell},y^{\otimes(n-\ell)})=\sum_{i=0}^{n}a_iB_{n,\ell,i}(2y)$$
where $\displaystyle{B_{n,\ell,i}(y)=2^{-\ell}\sum_{j=\max(0,i-n+\ell)}^{\min(i,\ell)}\binom{\ell}{j}\binom{n-\ell}{i-j}y^{i-j}(1-y)^{n-\ell-(i-j)}}$.
\end{lem}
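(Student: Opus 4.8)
The plan is to reduce the first identity to a one-parameter family of sequences by linearity, and then to recognise both sides as the same product via generating functions. Observe that both the left-hand and right-hand sides are $\R$-linear in the vector $(a_0,\ldots,a_n)$: the left-hand side because $\delta^{(k)}[a](0)$ is linear in $a$, the right-hand side because it is literally $\sum_i a_i(\cdots)$. Moreover, the sequences $(z^i)_{i=0}^{n}$, for $n+1$ distinct real values of $z$, form a basis of $\R^{n+1}$ (Vandermonde). Hence it suffices to establish the identity when $a_i=z^i$ for an arbitrary $z\in\R$.

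For $a_i=z^i$, a telescoping argument (or the closed form $\delta^{(k)}[a](0)=\sum_{j=0}^{k}(-1)^{k-j}\binom{k}{j}a_j$ together with the binomial theorem) gives $\delta^{(k)}[a](0)=(z-1)^{k}$. Using the elementary generating identity $\sum_{k=0}^{n}e_{n,k}(x)w^{k}=\prod_{j=1}^{n}(1+wx_j)$ with $w=2(z-1)$, the left-hand side becomes $\sum_{k=0}^{n}\big(2(z-1)\big)^{k}e_{n,k}(x)=\prod_{j=1}^{n}\big(1+2(z-1)x_j\big)$. On the other side, the inner sum $\sum_{|I|=i}\prod_{\ell\in I}2x_\ell\prod_{j\notin I}(1-2x_j)$ is exactly the coefficient of $t^{i}$ in $\prod_{j=1}^{n}\big((1-2x_j)+t\,2x_j\big)$, so for $a_i=z^i$ the right-hand side equals that polynomial evaluated at $t=z$, namely $\prod_{j=1}^{n}\big((1-2x_j)+z\,2x_j\big)=\prod_{j=1}^{n}\big(1+2(z-1)x_j\big)$. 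The two sides coincide, which proves the first identity.

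For the ``in particular'' statement I would simply substitute $x=((1/4)^{\otimes\ell},y^{\otimes(n-\ell)})$ into the same generating polynomial $\prod_{j}\big((1-2x_j)+t\,2x_j\big)$. The coordinates equal to $1/4$ contribute the factor $2^{-\ell}(1+t)^{\ell}$ and the coordinates equal to $y$ contribute $\big((1-2y)+2y\,t\big)^{n-\ell}$; expanding both factors by the binomial theorem and extracting the coefficient of $t^{i}$ yields precisely $B_{n,\ell,i}(2y)$, the summation range $\max(0,i-n+\ell)\le j\le\min(i,\ell)$ being forced by the constraints $0\le j\le\ell$ and $0\le i-j\le n-\ell$.

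The computations here are short; the only point requiring care is the bookkeeping in the coefficient extraction, i.e.\ identifying the inner sum with a coefficient of the product polynomial and getting the index range in $B_{n,\ell,i}$ exactly right. Should a purely combinatorial argument be preferred over generating functions, an alternative is to substitute the Newton forward-difference expansion $a_i=\sum_{k\le i}\binom{i}{k}\delta^{(k)}[a](0)$ into the right-hand side and then, for fixed $k$, choose a $k$-subset $K\subseteq I$ and sum over all $I\supseteq K$; the sum over $I\supseteq K$ factors because each index outside $K$ contributes $2x_j+(1-2x_j)=1$, so the coefficient of $\delta^{(k)}[a](0)$ collapses to $2^{k}e_{n,k}(x)$, recovering the left-hand side.
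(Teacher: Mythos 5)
Your proof is correct, and it takes a different route from the paper's. The paper obtains Lemma~\ref{polydelta} as an immediate specialisation (take $t=-1$ and replace $x_i$ by $2x_i$) of Corollary~\ref{invformulacst}, which is itself the "constant on subsets of equal size" case of the general set-indexed identity of Lemma~\ref{multivariate}, proved by a single interchange of summations. You instead exploit linearity in $(a_0,\dots,a_n)$ to reduce to the geometric sequences $a_i=z^i$ (a Vandermonde basis), for which $\delta^{(k)}[a](0)=(z-1)^k$, and then both sides collapse to the same product $\prod_j\bigl(1+2(z-1)x_j\bigr)$ via the generating function $\sum_k e_{n,k}(x)w^k=\prod_j(1+wx_j)$; the "in particular" part then follows by coefficient extraction, with the index range in $B_{n,\ell,i}$ correctly forced by $0\le j\le\ell$ and $0\le i-j\le n-\ell$. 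What your approach buys is a proof that requires no auxiliary operator machinery and makes the identity transparent as an equality of two factorisations of one polynomial in $z$; what the paper's approach buys is a single general lemma (\ref{multivariate}) that simultaneously yields this identity, the inversion formulae \eqref{invdifset} and \eqref{deltarel}, and the equivalence of the expanded and factorised forms of $P_{i,s}$. Note that the "purely combinatorial" alternative you sketch at the end -- substituting $a_i=\sum_{k\le i}\binom{i}{k}\delta^{(k)}[a](0)$ and summing over supersets of a $k$-subset -- is essentially the paper's argument run in the opposite direction, so that variant would coincide with the paper's method rather than differ from it.
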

\begin{proof} See Corollary \ref{invformulacst}.
\end{proof}
\begin{enumerate}
\item
Let us assume that $x=(x_1,\ldots,x_n)$ is a critical point of  $g_{n,\mu,s}$ different from $u_n$. Let $\ell$ denote the number of coordinates equal to $1/2$ ($\ell\in\enum{0}{n-1}$).  Every coordinate $x_i$ different from $1/2$ has to satisfy: $\partial_i h_{n,\mu,s}(\bb{\rho}(x))=0$, that is
$$\nu+\rho(x_i)\sum_{k=0}^{n-1}\alpha_k e_{n-1,k}(\widehat{\bb{\rho}(x)}^{(i)})=0. $$
In particular, it follows from Lemma~\ref{ESP} that if $x_{i}$ and $x_j$ are two coordinates 
of the critical point $x$ not equal to 1/2 then $$\rho(x_i)=\rho(x_j)\text{ or }\sum_{k=0}^{n-2}\alpha_ke_{n-2,k}(\widehat{\bb{\rho}(x)}^{(i,j)})=0. $$
By Lemma~\ref{polydelta}, 
$$\sum_{k=0}^{n-2}\alpha_ke_{n-2,k}(x)=\sum_{\ell=0}^{n-2}\delta^{(1)}[a](\ell)Q_{\ell}(x),$$ where $Q_{\ell}$ denotes a polynomial function which is positive on $x\in]0,1/4[^{n-2}$ for every $\ell\in\enum{0}{n-2}$. Thus this sum  cannot vanish in $]0,1/4[^{n-2}$ under the assumption that all coefficients $\delta^{(1)}[m](i)$ have the same sign and that for at least one $i\leq n-2$, $\delta^{(1)}[m](i)$ is non-zero. 
Therefore, such a critical point exists only if there exists a solution in the interval $]0, 1/4[$ of 
\begin{equation}
\nu+y\sum_{k=0}^{n-1}\alpha_k e_{n-1,k}\left((\frac{1}{4})^{\otimes \ell},y^{\otimes (n-\ell-1)}\right)=0. \tag{$\mathcal{E}^{'}_{\ell}$}\label{eqcritnalpha}
\end{equation}
In order to study the solutions of  \eqref{eqcritnalpha}, let $\phi_{\ell}(y)$ denote  the left-hand side of \eqref{eqcritnalpha}:
\begin{equation}\label{dfnPhi}
 \phi_{\ell}(y)=\nu+y\sum_{k=0}^{n-1}\alpha_k e_{n-1,k}((\frac{1}{4})^{\otimes \ell},y^{\otimes (n-\ell-1)})
\end{equation}
By Lemma~\ref{polydelta}, \begin{equation}\phi_{\ell}(y)=\nu+y\sum_{i=0}^{n-1}B_{n-1,\ell,i}(2y)\delta^{(1)}[m](i). 
 \end{equation}
Therefore, \eqref{eqcritnalpha} coincides with \eqref{eqcritn} of Remark~\ref{remptcrit}. The derivative of $\phi_{\ell}$ is equal to:
\begin{multline*}
\phi^{'}_{\ell}(y)=\sum_{i=0}^{n-1}B_{n-1,\ell,i}(2y)\delta^{(1)}[m](i)\\+2y(n-1-\ell)\sum_{i=0}^{n-2}B_{n-2,\ell,i}(2y)(\delta^{(1)}[m](i+1)-\delta^{(1)}[m](i)).
\end{multline*}
If $\delta^{(1)}[m](n-1)\leq\cdots \leq \delta^{(1)}[m](0)\leq 0$ 
 (respectively $\delta^{(1)}[m](n-1)\geq\cdots \geq \delta^{(1)}[m](0)\geq 0$),   
$\phi_{\ell}$ is a decreasing function on the interval $[0,1/2]$
(resp. an increasing function on the interval $[0,1/2]$). The value of $\phi_{\ell}$ at $0$ is 
$\nu$  and the value at $1/4$ is $V_n$. Therefore, under the assumptions of 1 or 2 of the proposition,
for every $\ell\in\{0,\ldots,n-1\}$ \eqref{eqcritnalpha} has no solution in $]0,1/4[$ if $V_n$ 
and $\nu$ have the same sign and  has exactly one solution in $]0,1/4[$ denoted by $\lambda_{\ell}$ 
if $V_n$ and $\nu$ have opposite signs. This proves assertions 1.(a) and 2.(a).  \\
For every pair of disjoint subsets $I$ and $J$  of $\enu{n}$, let us introduce the following 
point:  $u_{I,J}=(x_1,\ldots,x_n)$ with $$x_i=\begin{cases} 1/2 \text{ if } i\in I,\\
1/2 +1/2\sqrt{1-4\lambda_{|I|}} \text{ if } i\in J,\\
1/2 -1/2\sqrt{1-4\lambda_{|I|}} \text{ if } i\in \enu{n}\setminus( I\cup J).
\end{cases}$$ 
We have shown that if $V_n$ and $\nu$ have opposite signs, then every point $u_{I,J}$ is a critical point and any critical point is one of these points $u_{I,J}$. 
\newcounter{enumeration}
\setcounter{enumeration}{\theenumi}
\end{enumerate}
So that we may use our conclusions above, 
from now on, we assume that the hypotheses stated in point 1 of the proposition are satisfied. 
However, the computations that follow do not depend on these hypotheses, and so our proof is 
easily modified to the setting of point 2.
\begin{enumerate}
\setcounter{enumi}{\theenumeration}
\item Let us study the Hessian matrix of $h_{n,\mu,s}$ at a critical point $u_{I,J}$  such that $|I|\leq n-1$. For that,  set $\ell=|I|$, $\ell^{+}=|J|$ and $\ell^{-}=n-\ell-\ell^{+}$ and let us introduce the following notations: 
\begin{alignat*}{2}
a_{\ell}&=\partial_{1}\bar{h}((\frac{1}{4})^{\otimes \ell},(\lambda_\ell)^{\otimes (n-\ell)}),\ &b_{\ell}=-(1-4\lambda_{\ell})\frac{\nu}{\lambda_{\ell}^{2}},\\ c_\ell&=(1-4\lambda_{\ell})\partial^{2}_{n,n-1}\bar{h}((\frac{1}{4})^{\otimes \ell},(\lambda_\ell)^{\otimes (n-\ell)}).
   \end{alignat*}
 The Hessian matrix of $h_{n,\mu,s}$ at $u_{I,J}$ is permutation-similar to the following block matrix: 
$$\mc{H}_{I,J}=\begin{pmatrix}
\bb{A}_{\ell} &\bb{0} & \bb{0}\\
\bb{0} & \bb{B}_{\ell,\ell^{+}} & \bb{C}_{\ell}\\
\bb{0}& \bb{C}_{\ell} & \bb{B}_{\ell,\ell^{-}}\\ 
              \end{pmatrix}
$$
where 
\begin{itemize}
 \item $\bb{A}_{\ell}$  denotes the scalar matrix $-2a_{\ell}I_{\ell}$ with $a_{\ell}=\partial_{1}\bar{h}((\frac{1}{4})^{\otimes \ell},(\lambda_\ell)^{\otimes (n-\ell)})$, 
\item $\bb{B}_{\ell,k}$ denotes  the following $k$-by-$k$ matrix  : $\bb{B}_{\ell,k}=\begin{pmatrix} b_{\ell}&c_{\ell}&\cdots&c_{\ell}\\
c_{\ell}&\ddots&\ddots &\vdots\\
\vdots &\ddots& \ddots & c_{\ell}\\
c_{\ell}&\cdots&c_{\ell}&b_{\ell}\\
\end{pmatrix}$,
\item $\bb{C}_{\ell}$ denotes the $\ell^{+}$-by-$\ell^{-}$ matrix all the elements of which are equal to~$-c_{\ell}$. 
\end{itemize}
By assumption on $\mu$, $b_{\ell}<0$. To complete the proof of assertions (i) and (ii) of 1-(b), we shall prove that $a_{\ell}<0$ and that ${b_{\ell} <c_{\ell}<0}$. 
That  will imply that the submatrix 
$\begin{pmatrix}\bb{B}_{\ell,\ell^{+}} & \bb{C}_{\ell}\\
\bb{C}_{\ell} & \bb{B}_{\ell,\ell^{-}}
\end{pmatrix}$  is negative definite (for more details, see Lemma~\ref{symmatrix}) hence that the Hessian matrix of $h_{n,\mu,s}$ at a point $u_{I,J}$  has $|I|$ positive eigenvalues and $n-|I|$ negative eigenvalues. \\
First, let us study the sign of $a_{\ell}=4\nu+\sum_{i=0}^{n-1}\alpha_{i}e_{n-1,i}((\frac{1}{4})^{\otimes (\ell-1)},\lambda_{\ell}^{\otimes (n-\ell)})$. As  $\phi_{\ell}(\lambda_{\ell})=0$, an application of Lemma~\ref{ESP} yields:
$$a_{\ell}=(1-4\lambda_{\ell})\sum_{i=0}^{n-2}\alpha_{i}e_{n-2,i}((\frac{1}{4})^{\otimes (\ell-1)},\lambda_{\ell}^{\otimes (n-1-\ell)}).$$
The right-hand side can be rewritten using Lemma~\ref{polydelta}:
$$
a_{\ell}=(1-4\lambda_{\ell})\sum_{i=0}^{n-2}\delta^{(1)}[m](i)B_{n-2,\ell-1,i}(2\lambda_{\ell}). 
$$
The conditions  on $\delta^{(1)}[m](i)$ imply that $a_{\ell}$ is negative. \\
Let us now study the coefficients $\tilde{b}_{\ell}=(1-4\lambda_{\ell})^{-1}b_{\ell}$ and $\tilde{c}_{\ell}=(1-4\lambda_{\ell})^{-1}c_{\ell}$. As in the study of $a_{\ell}$ we use that $\phi_{\ell}(\lambda_{\ell})=0$ and Lemma~\ref{polydelta} to write $\tilde{b}_{\ell}$ and $\tilde{c}_{\ell}$ in terms of the coefficients $\delta^{(1)}[m](s)(i)$:
\begin{align*}
\tilde{b}_{\ell}&=\frac{1}{\lambda_{\ell}}\sum_{i=0}^{n-1}\delta^{(1)}[m](i)B_{n-1,\ell,i}(2\lambda_{\ell}),\\
 \tilde{c}_{\ell}&=2\sum_{i=0}^{n-2}(\delta^{(1)}[m](i+1)-\delta^{(1)}[m](i))B_{n-2,\ell,i}(2\lambda_{\ell}).
\end{align*}
As $\delta^{(1)}[m(s)](i)$ is assumed to be a decreasing sequence, $\tilde{c}_{\ell}<0$. 
After some computations, we obtain:
$$
\lambda_{\ell}(\tilde{c}_{\ell}-\tilde{b}_{\ell})=-\sum_{i=0}^{n-2}\delta^{(1)}[m](i)B_{n-2,\ell,i}(2\lambda_{\ell}).
$$
The conditions  on $\delta^{(1)}[m](i)$ imply that $\tilde{c}_{\ell}>\tilde{b}_{\ell}$.
\item Let us prove that $0<\lambda_{n-1}<\cdots < \lambda_{0}<1/4$ , which gives the relative positions of the coordinates of the critical points. \\ 
Let $\ell\in\enum{0}{n-2}$. If we return to the expression \eqref{dfnPhi} of $\phi_{\ell}$, use 
Lemma~\ref{ESP} and then Lemma~\ref{polydelta},  we obtain:
\begin{align*}
\phi_{\ell+1}(y)-\phi_{\ell}(y)&=y(1/4-y)\sum_{i=0}^{n-2}\alpha_{i+1}e_{n-2,i}((1/4)^{\otimes \ell},y^{\otimes (n-2-\ell)})\\
&=2y(1/4-y)\sum_{i=0}^{n-2}\delta^{(2)}[m](i)B_{n-2,\ell,i}(2y).
\end{align*}
By assumption, $\delta^{(2)}[m](i)\leq 0$ for every $i\in\enum{0}{n-2}$ hence $\phi_{\ell+1}(y)\leq\phi_{\ell}(y)$  for every $y\in[0,1/4]$. As the functions $\phi_{\ell}$ are decreasing  on $[0,1/4]$, we deduce that $\lambda_{\ell+1} \leq  \lambda_{\ell}$ for every $\ell\in\enum{0}{n-2}$. As the two critical points $u_{\enu{\ell},\emptyset}$ and $u_{\enu{\ell+1},\emptyset}$ have not the same properties, they cannot coincide and thus $\lambda_{\ell+1}<\lambda_{\ell}$ for every  $\ell\in\enum{0}{n-2}$.

\item \emph{Proof of assertion $1.(b).v$:} let $h_{\ell}$ denote the value of $h_{n,\mu,s}$ at a saddle point of index $n-\ell$: $p_\ell=((1/2)^{\otimes \ell},(\xi_{\ell})^{\otimes (n-\ell)})$. To prove that $h_{\ell}> h_{\ell+1}$ for every $\ell\in\enum{0}{n-2}$, we shall use the properties of the gradient dynamical system $\frac{dx(t)}{dt}=-\nabla \tilde{h}(x)$ with  $\tilde{h}=-h_{n,\mu,s}$. Fix a positive value $M$ large enough so that  $U_M=\tilde{h}^{-1}([-M,M])$ contains all critical points of $h$ (such an $M$ exists since $\tilde{h}(x)$ tends to infinity as $x$ tends to the boundary of $[0,1]^n$). The function $\tilde{h}$ decreases along  trajectories and a trajectory of a point $x\in M$ converges to a critical point of $\tilde{h}$ as $t$ tends towards $+\infty$, since $\tilde{h}$ has only isolated critical points. 
For $k\in \{0,\ldots,n-1\}$, let $U^{(k)}_M$ denote  the subset: $$ U^{(k)}_M=\{x\in U_M,\ x_1=\cdots =x_k=1/2 \text{ and } x_i<1/2\ \forall i>k\}.$$ 
Every subset $U^{(k)}_M$ contains exactly one critical point, the saddle point $p_k$. As $\partial_i \tilde{h}(x)=0$ at points $x$ such that $x_i=1/2$, the subset $U^{(k)}_M$ is positively invariant by the gradient flow. %
Therefore, to prove that $h_{k}> h_{k+1}$, it is enough to show that there exists  $0<y_0<1/2$ such that for $y\in] y_0, 1/2 [$,\  ${\tilde{h}((1/2)^{\otimes k},y,\xi_{k+1}^{\otimes{n-k-1}})<\tilde{h}(p_{k+1})}$. \\
As $\tilde{h}((1/2)^{\otimes k},y,\xi_{k+1}^{\otimes{n-k-1}})=-\bar{h}_{n,\mu,s}((1/4)^{\otimes k},y(1-y),\lambda_{k+1}^{\otimes n-k-1})$, it is enough to show that $\partial_{k+1}\bar{h}_{n,\mu,s}((1/4)^{\otimes (k+1)},\lambda_{k+1}^{\otimes (n-k-1)})<0$. Using that $\lambda_{k+1}$ is solution of 
the equation ($\mc{E}_{k+1}$), we obtain $$\partial_{k+1}\bar{h}_{n,\mu,s}((1/4)^{\otimes (k+1)},\lambda_{k+1}^{\otimes (n-k-1)})=(1-4\lambda_{k+1})\sum_{i=0}^{n-2}\delta^{(1)}[m](i)B_{n-2,k,i}(2y)<0.$$
\end{enumerate}
%
\section{Proof of convergence to the diffusion \label{secconvproof}}
In this section, we prove convergence to the diffusion approximation in the $n$-locus case 
(Theorem~\ref{thgen}).  We also establish the two simple expressions for the drift presented
in \Sref{secconv}.\par

First, the properties of the generator $\mc{G}_{n,s}$ stated in assertion (a) of Theorem~\ref{thgen} 
can be obtained by  applying the following theorem established by Cerrai and Cl\'ement: 
\begin{thm}[\citealt{Cerrai}] 
Let $\mathcal{S}^{+}(\R^n)$ be the space of symmetric, non-negative definite, $n\times n$ matrices. 
Let $A:[0,1]^{n}\rightarrow \mathcal{S}^{+}(\R^n)$ and $b:[0,1]^n\rightarrow \R^n$ be mappings of 
class $C^2$.  For $i\in\{1,\ldots,n\}$ and $\epsilon\in\{0,1\}$, let $\nu^{i}_{\epsilon}$ denote 
the unit inward normal vector of the hypercube 
$C^{i}_{\epsilon}=\{x\in [0,1]^n,\ x_i=\epsilon\}$. Let us assume the following two conditions:
\begin{itemize}
 \item for every $i\in\{1,\ldots,n\}$, $\epsilon\in\{0,1\}$ and $x\in C^{i}_{\epsilon}$,
$A(x)\nu^{i}_{\epsilon}(x)=\bb{0}$ and ${\langle b(x),\nu^{i}_{\epsilon}(x)\rangle\geq 0}$;
\item for every $i,j\in\{1,\ldots,n\}$, $A_{i,j}(x)$ depends only on $x_i$ and $x_j$. 
\end{itemize}
Then the operator $$L=\frac{1}{2}\sum_{i=1}^{n}\sum_{j=1}^{n}A_{i,j}(x)\frac{\partial^{2}}{\partial_{x_i}\partial_{x_j}}+\sum_{i=1}^{n}b_i(x)\frac{\partial^{2}}{\partial_{x_i}}$$
is closable in $C([0,1]^n)$ and its closure is the generator of a strongly 
continuous semigroup of contractions. 
\end{thm}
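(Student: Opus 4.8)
The plan is to establish the conclusion through the standard generation criterion for Feller semigroups: by the Hille--Yosida theorem in the form adapted to the positive maximum principle (see \citealt{EthierKurtz}), it suffices to check three things for $(L,C^2([0,1]^n))$. First, that the domain $C^2([0,1]^n)$ is dense in $C([0,1]^n)$, which is immediate (for instance from the Stone--Weierstrass theorem, since polynomials already lie in $C^2$ and are dense). Second, that $L$ satisfies the positive maximum principle, which yields dissipativity and hence the contraction estimate $\|\lambda u-Lu\|\ge\lambda\|u\|$ for $\lambda>0$. Third, the range condition: that $\mathrm{Range}(\lambda-L)$ is dense in $C([0,1]^n)$ for some $\lambda>0$. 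The two structural hypotheses on $A$ are precisely what make the third point work.

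For the positive maximum principle, let $u\in C^2([0,1]^n)$ attain a nonnegative maximum at $x_0$. If $x_0$ is interior, then $\nabla u(x_0)=\bb{0}$ and the Hessian $D^2u(x_0)$ is negative semidefinite, so $Lu(x_0)=\tfrac12\,\mathrm{tr}\big(A(x_0)D^2u(x_0)\big)\le 0$ because $A(x_0)$ is nonnegative definite. If $x_0$ lies in the relative interior of a face $C^i_\epsilon$, the tangential first derivatives vanish and $\langle\nabla u(x_0),\nu^i_\epsilon\rangle\le 0$, so the first-order term reduces to $\langle b(x_0),\nu^i_\epsilon\rangle\,\langle\nabla u(x_0),\nu^i_\epsilon\rangle\le 0$ by the drift hypothesis; meanwhile $A(x_0)\nu^i_\epsilon=\bb{0}$ forces the $i$-th row and column of $A(x_0)$ to vanish, so the second-order term only involves the tangential Hessian, which is again negative semidefinite against the nonnegative-definite tangential block of $A$. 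Lower-dimensional faces (edges and corners) follow by combining the relevant inward normals. This establishes dissipativity; I do not expect this step to be the obstacle.

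The genuine difficulty is the range condition: one must solve the resolvent equation $\lambda u-Lu=f$ with $u\in C^2$ for a dense family of data $f$, and $L$ \emph{degenerates} on the whole boundary (indeed $A\nu^i_\epsilon=\bb{0}$ on every face), so classical Schauder theory does not apply up to $\partial[0,1]^n$. The route underlying the cited theorem is to prove a priori estimates for $L$ in suitably boundary-weighted H\"older spaces, approximate $L$ by the uniformly elliptic operators $L_\eta=L+\eta\Delta$, solve the non-degenerate resolvent problems by standard theory, and pass to the limit $\eta\to 0$ using bounds that are uniform in $\eta$. The hypothesis that each $A_{i,j}(x)$ depends only on $(x_i,x_j)$ is exactly what controls the mixed second-order coefficients in these estimates, and the condition $A\nu^i_\epsilon=\bb{0}$ makes the boundary non-characteristic in the sense of Fichera, so that no boundary condition need be imposed and the solution is uniquely determined. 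This solvability/estimate step is where I expect the real work to lie.

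Finally, in the present application one only needs to verify the two hypotheses for $\mc{G}_{n,s}$, which is short. Here $A(x)=\mathrm{diag}\big(x_1(1-x_1),\ldots,x_n(1-x_n)\big)$ is of class $C^2$, nonnegative definite, and diagonal, so $A_{i,j}(x)$ trivially depends only on $(x_i,x_j)$. On the face $\{x_i=0\}$ (inward normal $e_i$) and on $\{x_i=1\}$ (inward normal $-e_i$) the entry $x_i(1-x_i)$ vanishes, whence $A\nu^i_\epsilon=\bb{0}$; moreover the assortment contribution $(1/2-x_i)x_i(1-x_i)P_{i,s}(x)$ to $b_i$ vanishes on both faces, so $\langle b,\nu^i_0\rangle=\mu_1\ge 0$ at $x_i=0$ and $\langle b,\nu^i_1\rangle=\mu_0\ge 0$ at $x_i=1$. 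Both conditions of the theorem therefore hold, and this yields assertion~(a) of Theorem~\ref{thgen}.
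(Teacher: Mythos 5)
This statement is quoted verbatim from \citealt{Cerrai} and the paper offers no proof of it --- the hard analytic content is entirely outsourced to that reference, and the only thing the paper actually does with it is check that $\mc{G}_{n,s}$ satisfies its hypotheses. Your final paragraph carries out exactly that verification ($A$ diagonal with $A_{ii}=x_i(1-x_i)$ vanishing on the faces $\{x_i=\epsilon\}$, drift reducing to $\mu_1\geq 0$ resp.\ $\mu_0\geq 0$ in the inward normal direction), and it is correct and is precisely what is implicit in the paper's appeal to the theorem. Your positive maximum principle argument is also sound: at an interior maximum $\mathrm{tr}(A(x_0)D^2u(x_0))\leq 0$ since $A$ is nonnegative definite and the Hessian is negative semidefinite, and on a face the conditions $A(x_0)\nu^{i}_{\epsilon}=\bb{0}$ and $\langle b(x_0),\nu^{i}_{\epsilon}\rangle\geq 0$ kill the normal contributions with the right signs.

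The one genuine gap, which you yourself flag, is the range condition: you describe the strategy (vanishing viscosity $L_\eta=L+\eta\Delta$, a priori estimates in boundary-weighted H\"older spaces uniform in $\eta$, with the hypothesis that $A_{i,j}$ depends only on $(x_i,x_j)$ controlling the mixed second-order terms) but do not establish any of the estimates, and dissipativity alone does not give generation. That step is the entire substance of the Cerrai--Cl\'ement theorem beyond the elementary part, so as a self-contained proof your proposal is incomplete. Since the paper itself delegates exactly this step to the citation, your treatment is in practice no less complete than the paper's; but if the intent was to prove the quoted theorem rather than to invoke it, the solvability of $\lambda u-Lu=f$ in $C^2$ up to the degenerate boundary would have to be carried out, and nothing in your sketch substitutes for it.
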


To prove the convergence result, we use the following theorem, due to Ethier and Nagylaki,
on diffusion approximations for Markov chains with two time scales.
\begin{thm}[\citealt{EthierNagylaki80}, Theorem~3.3] 
\label{ethier/nagylaki thm}
For $N\in\NN^*$, let $\{Z^{N}_k,\ k\in \NN\}$  be a homogeneous Markov chain in a metric space $E_N$ with Feller transition function. Let $F_1$ and $F_2$ be compact convex subsets of $\R^n$ and $\R^m$ respectively, having non-empty interiors. Assume further that $0\in\overset{\circ}{F}_2$. Let $\Phi_N:E_N\rightarrow F_1$ and $\Psi_N:E_N\rightarrow F_2$ be continuous functions. Define $X^{N}_k=\Phi_N(Z^N_{k})$ and $Y^{N}_k=\Psi_N(Z^{N}_k)$ for each $k\in\NN$. Let $(\epsilon_N)_N$ and $(\delta_N)_N$ be two positive sequences such that $\delta_N\rightarrow 0$ and $\epsilon_N/\delta_N\rightarrow 0$. \\
 Assume that there exist continuous functions $a:F_1\times \R^m\rightarrow \R^n\otimes\R^n$, $b:F_1\times \R^m\rightarrow \R^n$ and $c:F_1\times \R^m\rightarrow \R^m$ such that for $i,j\in\enu{n}$ and $\ell\in\enu{m}$ the following properties (a)-(e) hold as $N\rightarrow +\infty$ uniformly in $z\in E_N$ where $x=\Phi_N(z)$ and $y=\Psi_N(z)$:
\begin{itemize}
 \item[(a)] $\epsilon_{N}^{-1}\EE_z[X^{N}_{1}(i)-x(i)]=b_i(x,y)+o(1)$,
\item[(b)] $\epsilon_{N}^{-1}\EE_z\big[(X^{N}_{1}(i)-x(i))(X^N_{1}(j)-x(j))\big]=a_{i,j}(x,y)+o(1)$,
\item[(c)] $\epsilon_{N}^{-1}\EE_z[(X^N_{1}(i)-x(i))^4]=o(1)$,
\item[(d)] $\delta_{N}^{-1}\EE_z[Y^N_{1}(\ell)-y(\ell)]=c_{\ell}(x,y)+o(1)$,
\item[(e)] $\delta_{N}^{-1}\EE_z[(Y^N_{1}(\ell)-y(\ell))^2]=o(1)$.
\end{itemize}
Assume further that
\begin{itemize}
 \item[(f)] $c$ is  of class $C^2$,  $c(x,0)=0$ for all $x\in\R^m$ and the solution of the differential equation 
$$ \frac{d}{dt}u(t,x,y)=c(x,u(t,x,y)), \quad u(0,x,y)=y.$$
exists for all $(t,x,y)\in [0,+\infty[\times F_1\times F_2$ and satisfies 
$$\lim_{t\rightarrow +\infty}\sup_{(x,y)\in F_1\times F_2}|u(t,x,y)|=0.$$
\item[(g)] The closure of the following operator 
$$\mathcal{L}=\frac{1}{2}\sum_{i,j=1}^{n}a_{i,j}(x,0)\frac{\partial^{2}}{\partial_{x_i}\partial_{x_j}}+\sum_{i=1}^{n}b_{i}(x,0)\frac{\partial}{\partial_{x_i}},\quad \mathcal{D}(\mathcal{L})=C^2(F_1),$$
generates a strongly continuous semigroup on $C(F_1)$ corresponding to a diffusion process $X$ in $F_1$. 
\end{itemize}
Then the following conclusions in which the symbol $\Rightarrow$ denotes convergence in distribution, hold:
\begin{itemize}
 \item[(i)] If $X^{N}_{0} \Rightarrow X(0)$ then $\{X^{N}_{[t/\epsilon_N]},t\geq 0\} \Rightarrow X(\cdot)$ in $D_{F_1}([0,+\infty[)$ (where $D_{F_1}([0,+\infty[)$ is the space of c\`adl\`ag paths $\omega: [0,\infty)\rightarrow F_1$ with the Skorohod topology),
\item[(ii)] For every positive sequence $(t_N)_N$ that converges to $+\infty$, $Y^{N}_{[t_N/\delta_N]}\Rightarrow 0$.
\end{itemize}
\end{thm}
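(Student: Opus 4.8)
The plan is to run the two halves of the conclusion in tandem, treating $Y^N$ as a fast variable that relaxes to $0$ before $X^N$ has moved appreciably, and then to identify $X^N$ as the solution of the martingale problem for $\mc{L}$. First I would fix notation: write $x=\Phi_N(z)$, $y=\Psi_N(z)$, and introduce the rescaled discrete generator acting on $f\in C^2(F_1)$,
$$B_Nf(z)=\epsilon_N^{-1}\EE_z[f(X^N_1)-f(x)].$$
A second-order Taylor expansion of $f$ about $x$, combined with (a), (b) and (c), gives $B_Nf(z)=\mc{L}_{x,y}f(x)+o(1)$ uniformly in $z$, where $\mc{L}_{x,y}$ denotes the second-order operator with coefficients $a(x,y)$ and $b(x,y)$; here the fourth-moment bound (c), interpolated with (b) via Cauchy--Schwarz, forces the third-order Taylor remainder to be $o(\epsilon_N)$ and hence negligible after division by $\epsilon_N$.

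For conclusion (ii) I would rescale time by $\delta_N$ and first prove a fluid limit on compact intervals: conditions (d) and (e) are exactly the drift and vanishing-fluctuation hypotheses forcing $Y^N_{[\cdot/\delta_N]}$ to converge in probability, uniformly on $[0,T]$, to the deterministic solution $u(\cdot,x_0,y_0)$ of $\dot u=c(x,u)$, the slow variable being frozen because its increments accumulate to $O(\epsilon_N/\delta_N)\to0$ over the $O(\delta_N^{-1})$ steps involved. To pass from a fixed horizon to an arbitrary sequence $t_N\to+\infty$ I would exploit the uniform asymptotic stability of the origin encoded in (f): a converse Lyapunov theorem furnishes $V\in C^2(F_2)$ with $\langle\nabla V(y),c(x,y)\rangle\le -W(y)$ uniformly in $x$, and then (d), (e) together with the boundedness of the derivatives of $V$ on the compact $F_2$ yield a supermartingale inequality of the form $\EE_z[V(Y^N_1)]\le V(y)-\kappa\delta_N W(y)+o(\delta_N)$. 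Iterating this inequality over $[t_N/\delta_N]$ steps and applying a discrete Gronwall estimate gives $\EE[V(Y^N_{[t_N/\delta_N]})]\to0$, whence $Y^N_{[t_N/\delta_N]}\Rightarrow0$.

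For conclusion (i) I would first establish relative compactness of $\{X^N_{[\cdot/\epsilon_N]}\}$ in $D_{F_1}([0,+\infty))$, which follows from (a), (b) and (c) by a standard moment criterion (the $o(\epsilon_N)$ fourth-moment bound ruling out accumulating jumps). For $f\in C^2(F_1)$, Dynkin's formula shows that $f(X^N_{[t/\epsilon_N]})-f(X^N_0)-\epsilon_N\sum_{j<[t/\epsilon_N]}B_Nf(Z^N_j)$ is a martingale, and by the expansion above its compensator equals, in rescaled time, $\int_0^t\mc{L}_{X^N(s),Y^N(s)}f(X^N(s))\,ds+o(1)$. The decisive point is that on the slow timescale $Y^N_{[s/\epsilon_N]}\to0$: since $\epsilon_N/\delta_N\to0$, the relaxation of $Y^N$ occupies a slow-time interval of length $O(\epsilon_N/\delta_N)\to0$, so $\mc{L}_{X^N,Y^N}f$ may be replaced by $\mc{L}f=\mc{L}_{X^N,0}f$ in the compensator. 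Consequently every weak limit point $X$ of $X^N_{[\cdot/\epsilon_N]}$ solves the martingale problem for $\mc{L}$; since (g) makes $\mc{L}$ the generator of a Feller diffusion this problem is well posed, and uniqueness of its solution identifies all limit points with $X$, upgrading relative compactness to convergence in distribution.

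The main obstacle is the interface between the two timescales, which surfaces twice. In (ii) a naive block-by-block iteration of the fixed-horizon fluid limit accumulates an error proportional to the number $t_N/T\to+\infty$ of blocks; this is precisely why a genuine Lyapunov/supermartingale contraction is needed to prevent the error from blowing up, and extracting the Lyapunov function $V$ from the bare stability hypothesis (f) is the technical heart of the argument. In (i) the analogous difficulty is to justify replacing $a(x,Y^N),b(x,Y^N)$ by $a(x,0),b(x,0)$ inside the compensator: one must show that the initial transient during which $Y^N$ has not yet reached $0$ contributes negligibly to $\int_0^t\mc{L}_{X^N,Y^N}f\,ds$, which again rests on the quantitative relaxation estimate established in (ii).
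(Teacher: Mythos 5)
You should first be aware that the paper offers no proof of this statement: it is Theorem~3.3 of \cite{EthierNagylaki80}, quoted (in the restricted form needed here) with attribution and used as a black box in the proof of Theorem~\ref{thgen}. So your attempt can only be compared with the original proof, which is not probabilistic but semigroup-theoretic: Ethier and Nagylaki deduce the result from operator-convergence theorems in the spirit of Kurtz, whereas you argue by tightness plus identification of limit points via the martingale problem, with a stochastic-averaging treatment of the fast variable. That is a genuinely different route, and much of it is sound: condition (g) does make the martingale problem for $\mc{L}$ well posed, and interpolating the third-order Taylor remainder between (b) and (c) by Cauchy--Schwarz is exactly the right way to make the generator expansion rigorous.

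The gap is in your treatment of conclusion (ii), and it propagates into (i). First, hypothesis (f) is bare uniform attractivity of the parametrized flows $u(\cdot,x,\cdot)$; extracting from it a single $C^2$ function $V$ with $\langle\nabla V(y),c(x,y)\rangle\le -W(y)$ for \emph{every} frozen $x\in F_1$ simultaneously is a nontrivial converse-Lyapunov theorem for a family of systems, not a citation-free step. More seriously, even granting $V$, your iteration does not close: you correctly observe that naive block-by-block iteration of the fluid limit accumulates an error proportional to the number of blocks, but your cure has the same disease. With $W$ merely positive definite, summing $\EE_z[V(Y^N_1)]\le V(y)-\kappa\delta_N W(y)+o(\delta_N)$ over $[t_N/\delta_N]$ steps leaves an accumulated error of order $(t_N/\delta_N)\cdot o(\delta_N)=t_N\cdot o(1)$, which is uncontrolled because $t_N\to+\infty$ is arbitrary; a discrete Gronwall argument gives nothing unless the contraction is geometric in $V$ itself (i.e.\ $W\geq\kappa V$), which your construction does not provide. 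The repair --- and the reason hypotheses (a)--(e) are stated \emph{uniformly} in $z\in E_N$ --- is a restart argument requiring no Lyapunov function at all: given $\epsilon>0$, choose $T$ with $\sup_{(x,y)\in F_1\times F_2}|u(T,x,y)|\le\epsilon$ by (f), then apply your own fixed-horizon fluid limit, by the Markov property, to the single block of $[T/\delta_N]$ steps \emph{ending} at time $[t_N/\delta_N]$, started from the arbitrary random state of the chain at the beginning of that block. Uniformity in the initial state plus compactness of $F_2$ gives $\EE\big|Y^N_{[t_N/\delta_N]}\big|\le\epsilon+o(1)$, which is (ii). The same estimate, valid uniformly over all times $k\ge[T/\delta_N]$, is what legitimises replacing $a(x,Y^N),b(x,Y^N)$ by $a(x,0),b(x,0)$ in the compensator in part (i): the error in the occupation-time integral is then at most $\mathrm{diam}(F_2)\,T\epsilon_N/\delta_N+t\epsilon+o(1)$, whereas controlling only the ``initial transient'', as your last paragraph suggests, is not enough --- one must rule out excursions of $Y^N$ away from $0$ at all later times on the slow time scale.
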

\begin{rem}We have only stated the part of Ethier and Nagylaki's theorem that
we need. The full statement also gives a convergence result when the sequence 
$(\delta_{N})_N$ converges to a positive real number. 
\end{rem}
To apply this theorem, we consider the two sequences $\epsilon_N=N^{-2}$ and $\delta_N=N^{-1}$, 
we set $E_N=\{z\in (N^{-1}\NN)^{\mc{A}},\ \sum_{\bb{i}\in\mc{A}}z(\bb{i})=1\}$, and we define 
by $(\Phi_N,\Psi_N)$ a change of coordinates such that $\Psi_{N}^{-1}(\{0\})$ is the 
linkage equilibrium manifold: 
$$\begin{array}{ccccccccc}\Phi_N :& E_N &\rightarrow &[0,1]^n& and & \Psi_N :& E_N &\rightarrow &[-1,1]^{2^n-n-1}\\
& z &\mapsto &(u_1,\ldots,u_n)&&& z&\mapsto &(u_I,\ I\subset \enu{n} \mbox{ s. t. } |I|\geq 2 )\\
\end{array}
$$
where $u_i=\sum_{\bb{\ell},\ell_i=0}z(\bb{\ell})$ for $i\in\enu{n}$ and $u_I=\prod_{i\in I}u_{i}-\sum_{\bb{\ell},\bb{\ell}_{|I}\eq 0}z(\bb{\ell})$ for each  $I\subset\enu{n}$ having at least two elements. \\

First (in \Sref{sectproofcond}), we shall check that $X^{(N)}_1=\Phi_N(Z^{(N)}_1)$ 
and $Y^{(N)}_1=\Psi_N(Z^{(N)}_1)$ satisfy the conditions (a)-(f) of Ethier and Nagylaki's 
theorem with the following expressions for the functions $a_{i,j}(x,0)$ and $b_i(x,0)$:
\begin{alignat}{2}
 a_{i,j}(x,0)&=x(i)(1-x(i))\un_{\{i=j\}},\\
b_i(x,0)&=(1-x(i))\mu_1-x(i)\mu_0
+(1/2-x(i))x(i)(1-x(i))P_{i,s}(x),
\end{alignat}
where 
\begin{multline*}
P_{i,s}(x)=\sum_{J\subset \enu{n}\setminus\{i\}}\sum_{H\subset \enu{n}\setminus\{i\}}(s_{J\cup\{i\},H}-s_{J,H})\\
\prod_{j\in J}x(j)\prod_{h\in H}x(h)\prod_{{j\in \enu{n},}\atop{j\not\in J\cup\{i\}}}(1-x(j))\prod_{{h\in \enu{n},}\atop{h\not\in H\cup\{i\}}}(1-x(h)),
\end{multline*}
and, for two subsets $I$ and $J$ of $\enu{n}$, $s_{I,J}$
denotes the assortment parameter $s_{\bb{i},\bb{j}}$ for the types  
$\bb{i}=(\bb{0}_{I},\bb{1}_{\bar{I}})$  and $\bb{j}=(\bb{0}_J,\bb{1}_{\bar{J}})$. \\

In \Sref{sectproofdrift} we shall show that $P_{i,s}$ has the following two equivalent expressions:
\begin{alignat*}{2}
P_{i,s}(x)&=\sum_{A\subset \enu{n}\setminus\{i\}}2^{|A|}\delta_{A\cup\{i\}}[m(s)](\emptyset)\prod_{\ell\in A}x(\ell)(1-x(\ell))\\
&=\sum_{A\subset \enu{n}\setminus\{i\}}\delta_{i}[m(s)](A)\prod_{k\in A}2x(k)(1-x(k))\prod_{\ell\not\in A\cup\{i\}}\Big(1-2x(\ell)(1-x(\ell))\Big).
\end{alignat*}
\subsection{Verification of the conditions (a)-(f) of Ethier and Nagylaki's theorem  \label{sectproofcond}}
As the proportion of individuals of a given type $\bb{i}$ can only change by $\pm 1/N$ in one step:
\begin{itemize}
 \item If $r\in\NN^*$ and $\bb{i}\in\mc{A}$, then 
\begin{equation}\label{momentz1}
\EE_z\big[(Z^{(N)}_1(\bb{i})-z(\bb{i}))^r\big]=N^{-r}\sum_{\bb{j}\in\mc{A}\setminus\{ \bb{i}\}}\Big(f_N(z,\bb{j},\bb{i})+(-1)^{r}f_N(z,\bb{i},\bb{j})\Big)
\end{equation}
\item if $r,u\in\NN^*$,  $\bb{i},\bb{j}\in\mc{A}$ so that $\bb{i}\neq \bb{j}$, then 
\begin{multline}\label{momentz2}\EE_z\big[(Z^{(N)}_1(\bb{i})-z(\bb{i}))^r(Z^{(N)}_1(\bb{j})-z(\bb{j}))^u\big]\\=N^{-(r+u)}\Big((-1)^rf_N(z,\bb{i},\bb{j})+(-1)^{u}f_N(z,\bb{j},\bb{i})\Big)
\end{multline}
\item if $r\geq 3$ and $\bb{i}^{(1)}$,\ldots,$\bb{i}^{(r)}\in\mc{A}$ so that  at least three of them  are distinct, then 
\begin{equation}
\label{momentz3}\EE_z\Big[\prod_{u=1}^{r}\big(Z^{(N)}_1(\bb{i}^{(u)})-z(\bb{i}^{(u)})\big)\Big]=0.
\end{equation}
\end{itemize}
\paragraph{Condition (a).}
To show that condition (a) of Theorem~\ref{ethier/nagylaki thm}
holds, we first examine the drift of $Z^{(N)}$. 
A Taylor expansion of the transition probabilities of the Markov chain $(Z^{(N)}_t)_{t\in\NN}$ using assumption \Href{Hscaling}  yields the following formula:
\begin{lem}\label{driftZ}
For every $\bb{i}\in\mc{A}$, 
$$N^2\EE_z[Z^{(N)}_1(\bb{i})-z(\bb{i})]=NB^{(0)}_{\bb{i}}(z)+B^{(1)}_{\bb{i}}(z)+O(N^{-1}),
 \mbox{ uniformly on } z\in E_N,$$  
where
\begin{eqnarray*}
 B^{(0)}_{\bb{i}}(z)&=&\sum_{\bb{k}\in\mc{A}}\sum_{\bb{j}\in\mc{A}}z(\bb{j})z(\bb{k})q((\bb{j},\bb{k});\bb{i})-z(\bb{i})\\
 B^{(1)}_{\bb{i}}(z)&=&\sum_{\bb{k}\in\mc{A}}\sum_{\bb{j}\in\mc{A}}z(\bb{j})z(\bb{k})\Big(\sum_{u=1}^nq((\bb{j},\bb{k});(1-i_u,\bb{i}_{\enu{n}\setminus\{u\}}))\mu_{1-i_u}-q((\bb{j},\bb{k});\bb{i})\sum_{u=1}^{n}\mu_{i_u}\Big)\\
&+&\sum_{\bb{k}\in\mc{A}}\sum_{\bb{j}\in\mc{A}}s_{\bb{j},\bb{k}}z(\bb{j})z(\bb{k})q((\bb{j},\bb{k});\bb{i})-z(\bb{i})\sum_{\bb{k}\in\mc{A}}s_{\bb{i},\bb{k}}z(\bb{k})\\
&-&\sum_{\bb{k}\in\mc{A}}\sum_{\bb{j}\in\mc{A}}\sum_{\bb{h}\in\mc{A}}s_{\bb{j},\bb{h}}z(\bb{j})z(\bb{h})z(\bb{k})q((\bb{j},\bb{k});\bb{i})+z(\bb{i})\sum_{\bb{h}\in\mc{A}}\sum_{\bb{k}\in\mc{A}}s_{\bb{i},\bb{h}}z(\bb{k})z(\bb{h})
\end{eqnarray*}
\end{lem}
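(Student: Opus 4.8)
The plan is to read off the first moment from the case $r=1$ of the moment identity \eqref{momentz1}, which gives
\[
\EE_z[Z^{(N)}_1(\bb{i})-z(\bb{i})]=N^{-1}\sum_{\bb{j}\neq\bb{i}}\big(f_N(z,\bb{j},\bb{i})-f_N(z,\bb{i},\bb{j})\big),
\]
so that $N^2\EE_z[Z^{(N)}_1(\bb{i})-z(\bb{i})]=N\sum_{\bb{j}\neq\bb{i}}\big(f_N(z,\bb{j},\bb{i})-f_N(z,\bb{i},\bb{j})\big)$. The entire content of the lemma is then a first-order Taylor expansion in $N^{-1}$ of the gain term $\sum_{\bb{j}}f_N(z,\bb{j},\bb{i})$ and the loss term $\sum_{\bb{j}}f_N(z,\bb{i},\bb{j})$, using the scaling \Href{Hscaling}, namely $s^{(N)}_{\bb{i},\bb{j}}=s_{\bb{i},\bb{j}}/N$ and $\mu^{(N)}_\epsilon=\mu_\epsilon/N$.

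First I would expand the two $N$-dependent factors in $f_N$. Since $\sum_{\bb{\ell}}z(\bb{\ell})=1$, the normalising denominator of $w^{(N)}$ equals $1+N^{-1}\sum_{\bb{\ell}}s_{\bb{j},\bb{\ell}}z(\bb{\ell})$, which, as the $s_{\bb{i},\bb{j}}$ are nonnegative, is bounded below by $1$; no singularity can arise and
\[
w^{(N)}(z,\bb{j},\bb{k})=1+\frac1N\Big(s_{\bb{j},\bb{k}}-\sum_{\bb{\ell}}s_{\bb{j},\bb{\ell}}z(\bb{\ell})\Big)+O(N^{-2}).
\]
For the mutation kernel I would record that $\mu^{(N)}(\bb{\ell},\bb{i})=\un_{\{\bb{\ell}=\bb{i}\}}+N^{-1}M_{\bb{\ell},\bb{i}}+O(N^{-2})$, where the first-order coefficient $M_{\bb{\ell},\bb{i}}$ is nonzero only on the diagonal, where it equals $-\sum_{u}\mu_{i_u}$ (mass lost because some locus of $\bb{i}$ mutates), and at the single-locus flips $\bb{\ell}=(1-i_u,\bb{i}_{\enu{n}\setminus\{u\}})$, where it equals $\mu_{1-i_u}$ (mass gained); configurations differing from $\bb{i}$ at two or more loci contribute only at order $N^{-2}$. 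Because $\mc{A}$ is finite, the simplex is compact, and all constants are fixed, every remainder here is $O(N^{-2})$ \emph{uniformly} in $z$, which delivers the uniform error bound in the statement.

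Next I would substitute these expansions into $f_N$, multiply out, and collect by powers of $N$. At order $N^0$ one finds $\sum_{\bb{j}}f_N(z,\bb{j},\bb{i})=\sum_{\bb{j},\bb{k}}z(\bb{j})z(\bb{k})q((\bb{j},\bb{k});\bb{i})+O(N^{-1})$, while $\sum_{\bb{j}}f_N(z,\bb{i},\bb{j})=z(\bb{i})$ \emph{exactly} by probability conservation (using $\sum_{\bb{j}}\mu^{(N)}(\bb{\ell},\bb{j})=1$, $\sum_{\bb{\ell}}q=1$ and $\sum_{\bb{k}}z(\bb{k})w^{(N)}(z,\bb{i},\bb{k})=1$); hence the leading difference is precisely $B^{(0)}_{\bb{i}}(z)$. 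The order-$N^{-1}$ coefficient splits into a mutation contribution, coming from the $M_{\bb{\ell},\bb{i}}$ term and reproducing the first line of $B^{(1)}_{\bb{i}}$ once the sum over $\bb{\ell}$ is resolved into its diagonal and single-flip pieces, and a selection contribution coming from the $w^{(N)}$ correction $s_{\bb{j},\bb{k}}-\sum_{\bb{h}}s_{\bb{j},\bb{h}}z(\bb{h})$, which yields the remaining two lines. Expanding the gain and loss terms separately produces the two $z(\bb{i})$-weighted selection terms exactly as written in the statement; using instead the exact identity $\sum_{\bb{j}}f_N(z,\bb{i},\bb{j})=z(\bb{i})$ shows these two terms cancel (they differ only by the relabelling $\bb{h}\leftrightarrow\bb{k}$ together with $\sum_{\bb{k}}z(\bb{k})=1$), confirming the consistency of the formula.

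I expect the only genuine obstacle to be the combinatorial bookkeeping of the order-$N^{-1}$ mutation term: one must check that at this order $\mu^{(N)}(\bb{\ell},\bb{i})$ sees exactly the single-locus transitions into and out of $\bb{i}$, so that $\sum_{\bb{\ell}}q((\bb{j},\bb{k});\bb{\ell})M_{\bb{\ell},\bb{i}}$ collapses to $\sum_{u}q((\bb{j},\bb{k});(1-i_u,\bb{i}_{\enu{n}\setminus\{u\}}))\mu_{1-i_u}-q((\bb{j},\bb{k});\bb{i})\sum_{u}\mu_{i_u}$. Everything else is a routine two-factor expansion, and the uniformity in $z$ is immediate from the finiteness of $\mc{A}$ and the boundedness of the denominator away from $0$.
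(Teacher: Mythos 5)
Your proposal is correct and follows essentially the same route as the paper: it starts from the identity $\EE_z[Z^{(N)}_1(\bb{i})-z(\bb{i})]=N^{-1}\sum_{\bb{j}\neq \bb{i}}(f_N(z,\bb{j},\bb{i})-f_N(z,\bb{i},\bb{j}))$, expands $w^{(N)}$ and $\mu^{(N)}(\bb{\ell},\bb{j})$ to first order in $N^{-1}$ under \Href{Hscaling}, and collects terms, which is exactly the paper's (much terser) argument. Your extra observations --- the exact identity $\sum_{\bb{j}}f_N(z,\bb{i},\bb{j})=z(\bb{i})$ and the resulting cancellation of the two $z(\bb{i})$-weighted assortment terms --- are correct and only add detail to the same computation.
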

\begin{proof}
By assumption \Href{Hscaling}, for two different types $\bb{i},\bb{j}\in\mathcal{A}$ $$f_N(z,\bb{i},\bb{j}):=\sum_{\bb{k},\bb{\ell}\in\mathcal{A}}z(\bb{i})z(\bb{k})w^{(N)}(z,\bb{i},\bb{k})q((\bb{i},\bb{k});\bb{\ell})\mu^{(N)}(\bb{\ell},\bb{j}).$$
where $w^{(N)}(z,\bb{i},\bb{k})=1+\frac{1}{N}\big(s_{\bb{i},\bb{k}}-\sum_{\bb{h}\in \mc{A}}s_{\bb{i},\bb{h}}z(\bb{h})\big)+O(N^{-2})$  
and 
$$\mu^{(N)}(\bb{\ell},\bb{j})=\left\{\begin{array}{ll}
1-\frac{1}{N}\sum_{u=1}^{n}\mu_{j_u}+O(N^{-2}) & \text{ if } d_h(\bb{\ell},\bb{j})=0\\
\frac{1}{N}\mu_{1-j_i}+O(N^{-2}) & \text{ if } d_h(\bb{\ell},\bb{j})=1 \text{ and } \ell_i=1-j_i\\
O(N^{-2}) & \text{ if }d_h(\bb{\ell},\bb{j})\geq 2
\end{array}\right.$$ 
To prove Lemma~\ref{driftZ}, it suffices to use these expansions in  
$$\EE_z\big[Z^{(N)}_1(\bb{i})-z(\bb{i})\big]
=N^{-1}\sum_{\bb{j}\neq \bb{i}}\Big(f_N(z,\bb{j},\bb{i})-f_N(z,\bb{i},\bb{j})\Big)$$ 
and to simplify.  
\end{proof}
Let $u\in\enu{n}$. To establish an expression for the drift of  $X^{(N)}(u)$, we must 
compute $\sum_{\bb{i}\in\mc{A},i_{u}=0}B^{(0)}_{\bb{i}}(z)$ and $\sum_{\bb{i}\in\mc{A},i_{u}=0}B^{(1)}_{\bb{i}}(z)$. Direct computations yield:
\begin{lem}
For every $u\in\enu{n}$ and $z\in E_N$, 
 \begin{equation}
\label{somib0}
 \sum_{\bb{i}\in\mc{A},\ i_{u}=0}B^{(0)}_{\bb{i}}(z)=0,
\end{equation}
\begin{equation}
\label{somib1}
\sum_{\bb{i}\in\mc{A},\ i_{u}=0}B^{(1)}_{\bb{i}}(z)=(1-x(u))\mu_1-x(u)\mu_0 +\frac{1}{2}G_u(z),
\end{equation} 
where
\[x(u)=\sum_{\bb{i}\in\mc{A},\ i_u=0}z(\bb{i})\ \text{ and }\ G_u(z)=\sum_{\bb{j}\in\mc{A}\ }\sum_{\bb{h}\in\mc{A}}z(\bb{j})z(\bb{h})s_{\bb{j},\bb{h}}(\un_{\{j_u=0\}}-x(u)).
\]
\end{lem}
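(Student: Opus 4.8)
The plan is to compute the two sums separately, and in both cases the decisive tool is a single marginalization identity for the recombination operator $q$. First I would establish that, for any fixed locus $u$ and any parental types $\bb{j},\bb{k}$,
\[
\sum_{\bb{i}\in\mc{A},\ i_u=0} q((\bb{j},\bb{k});\bb{i}) = \tfrac{1}{2}\big(\un_{\{j_u=0\}}+\un_{\{k_u=0\}}\big).
\]
This drops straight out of the definition $q((\bb{j},\bb{k});\bb{i})=\sum_{L}r_L\un_{\{\bb{i}=(\bb{j}_{|L},\bb{k}_{|\bar L})\}}$: the offspring carries allele $0$ at locus $u$ exactly when $u\in L$ and $j_u=0$, or $u\notin L$ and $k_u=0$, so the left side equals $\un_{\{j_u=0\}}\sum_{L\ni u}r_L + \un_{\{k_u=0\}}\sum_{L\not\ni u}r_L$. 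Assumption \Href{Hrecombsym} forces $\sum_{L\ni u}r_L=\sum_{L\not\ni u}r_L$, and as these sum to $1$ each equals $1/2$. This is the only point at which \Href{Hrecombsym} enters, and it is the crux of the whole computation. For \eqref{somib0} I would then sum the defining expression of $B^{(0)}_{\bb{i}}$ over $\{i_u=0\}$, apply the identity, and factor the double sum using $\sum_{\bb{k}}z(\bb{k})=1$ and $\sum_{\bb{j}:\,j_u=0}z(\bb{j})=x(u)$; each of the two pieces collapses to $\tfrac12 x(u)$, and the subtracted $x(u)$ cancels the total to $0$.

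For \eqref{somib1} I would split $B^{(1)}$ into its mutation part and its assortment part. In the mutation part the inner sum over loci (call its index $v$, to avoid clashing with the fixed locus $u$) separates into the single term $v=u$ and the terms $v\ne u$. For each $v\ne u$ the involution $\bb{i}\mapsto\bb{i}^{(v)}$ that flips locus $v$ is a bijection of $\{i_u=0\}$ onto itself, under which $q((\bb{j},\bb{k});\bb{i}^{(v)})\mu_{1-i_v}$ is carried to $q((\bb{j},\bb{k});\bb{i})\mu_{i_v}$ (since $\mu_{1-i_v}=\mu_{i'_v}$ after the substitution $\bb{i}'=\bb{i}^{(v)}$), so those two contributions cancel in the sum. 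Only the $v=u$ term survives; applying the marginalization identity together with $\sum_{\bb{i}:\,i_u=1}q=1-\sum_{\bb{i}:\,i_u=0}q$ and then averaging against $z(\bb{j})z(\bb{k})$ yields exactly $(1-x(u))\mu_1-x(u)\mu_0$.

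For the assortment part I would handle its four terms in two pairs. The two terms quadratic in $z$ cancel: applying the marginalization identity to the first and using the symmetry $s_{\bb{j},\bb{k}}=s_{\bb{k},\bb{j}}$ of assumption \Href{Hpairing} to symmetrize $\un_{\{j_u=0\}}+\un_{\{k_u=0\}}$ into $2\un_{\{j_u=0\}}$ makes it the exact negative of the second. For the two terms cubic in $z$, the marginalization identity splits the term carrying $q$ into a piece where the free parent is summed away (contributing $\sum_{\bb{k}}z(\bb{k})=1$) and a piece constrained to $\{k_u=0\}$ (contributing the factor $x(u)$); combining these with the remaining cubic term leaves precisely $\tfrac12\sum_{\bb{j},\bb{h}}s_{\bb{j},\bb{h}}z(\bb{j})z(\bb{h})(\un_{\{j_u=0\}}-x(u))=\tfrac12 G_u(z)$, as required.

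The main obstacle is bookkeeping rather than depth: one must track carefully which parent supplies each locus and deploy the two symmetries — recombination symmetry $r_L=r_{\bar L}$ and assortment symmetry $s_{\bb{j},\bb{k}}=s_{\bb{k},\bb{j}}$ — so that the numerous linear-in-$s$ and cubic-in-$z$ contributions cancel in the right pattern. The substitution $\bb{i}\mapsto\bb{i}^{(v)}$ for the off-locus mutation terms is the only genuinely nonobvious manipulation; everything else is routine summation once the marginalization identity is in hand.
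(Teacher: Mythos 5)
Your proposal is correct and follows essentially the same route as the paper: the marginalization identity $\sum_{\bb{i}:i_u=0}q((\bb{j},\bb{k});\bb{i})=\tfrac12(\un_{\{j_u=0\}}+\un_{\{k_u=0\}})$ is exactly the paper's formula \eqref{somq} with $\bar r(u)=1/2$, and your treatment of the three pieces of $B^{(1)}$ (cancellation of off-locus mutation terms, vanishing of the quadratic assortment terms by $s$-symmetry, and the cubic terms producing $\tfrac12 G_u$) matches the paper's computation step for step.
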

\begin{proof}
 For $\epsilon\in\{0,1\}$ and $\bb{i}\in\mc{A}$, let $\sigma_{u}^{(\epsilon)}(\bb{i})$ denote the type $\bb{i}$  modified  by setting the allele $\epsilon$ at the locus $u$. 
We shall use the following formula several times:
\begin{equation}
\label{somq}
\sum_{\bb{i}\in\mc{A},\ i_{u}=0}q((\bb{j},\bb{k});\sigma_{u}^{(\epsilon)}(\bb{i}))=\un_{\{j_u=\epsilon\}}+\bar{r}(u)(\un_{\{k_u=\epsilon\}}-\un_{\{j_u=\epsilon\}})
\end{equation}
with $\bar{r}(u)=\sum_{I\subset\enu{n}\setminus\{u\}}r_I=\frac{1}{2}$ by assumption \Href{Hrecombsym}. \\
 First,  formula \eqref{somq} with $\epsilon=0$ provides \[\sum_{\bb{i}\in\mc{A},\ i_{u}=0}B^{(0)}_{\bb{i}}(z)=\sum_{\bb{j}\in\mc{A},\ j_u=0}z(\bb{j})+\bar{r}(u)\sum_{\bb{j}\in\mc{A}}\sum_{\bb{k}\in\mc{A}}(\un_{\{k_u=\epsilon\}}-\un_{\{j_u=\epsilon\}})-\sum_{\bb{i}\in\mc{A},\ i_u=0}z(\bb{i})=0.
\]
Let $B^{(1,j)}_{\bb{i}}(z)$ denote  the $j$-th line of the expression of $B^{(1)}_{\bb{i}}(z)$ for $j\in\{1,2,3\}$.\\ As  $\underset{i\in\mc{A},\ i_u=0, i_x=a}{\sum}q((j,k);\sigma^{\epsilon}_x(\bb{i}))$ does not depend on the value of $a$ if  $u\neq x$:
\begin{multline*}
\sum_{\bb{i}\in\mc{A},\ i_{u}=0}B^{(1,1)}_{\bb{i}}(z)=\sum_{\bb{k}\in\mc{A}}\sum_{\bb{j}\in\mc{A}}z(\bb{j})z(\bb{k})\sum_{\bb{i}\in\mc{A},\ i_u=0}\Big(q((\bb{j},\bb{k});\sigma^{(1)}_u(\bb{i}))\mu_{1}-q((\bb{j},\bb{k});\sigma^{(0)}_u(\bb{i}))\mu_0\Big).
 \end{multline*}

Applying \eqref{somq} again, we obtain: 
$$\sum_{\bb{i}\in\mc{A},\ i_{u}=0}B^{(1,1)}_{\bb{i}}(z)=(1-x(u))\mu_1-x(u)\mu_0.$$
Due to  the  symmetry of the parameters: $s_{\bb{i},\bb{j}}=s_{\bb{j},\bb{i}}$ for $\bb{i},\bb{j}\in\mc{A}$, we have: $$\sum_{\bb{i}\in\mc{A},\ i_{u}=0}B^{(1,2)}_{\bb{i}}(z)=0.$$
Finally, computations using \eqref{somq} yet again yield:
$$
\sum_{\bb{i}\in\mc{A},\ i_{u}=0}B^{(1,3)}_{\bb{i}}(z)=\frac{1}{2}G_u(z).
$$
\end{proof}
To obtain condition (a), it remains to express $G_{u}(z)$ in the new coordinates. The following lemma describes the inverse of the change of coordinates $(\Phi_N,\Psi_N)$: 
\begin{lem}
\label{chgtvar}
For $z\in E_N$ and $L\subset \enu{n}$, set $x(L)=\sum_{\bb{i},\ i_{|L}\eq 0}z(\bb{i})$ with the convention $x(\emptyset)=1$ and  $y(L)=\prod_{\ell\in L}x({\ell})-x(L)$ if $|L|\geq 2$.  Then for every $J\subset \enu{n}$, 
\begin{equation}\label{invchgv}
 z(\bb{0}_{J},\bb{1}_{\bar{J}})=\prod_{i\in J}x(i)\prod_{i\in \bar{J}}(1-x(i))-\sum_{I\subset \enu{n} \st J\subset I,\ |I|\geq 2}(-1)^{|I|-|J|}y(I).
\end{equation}
\end{lem}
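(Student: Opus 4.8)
The plan is to treat the family $\{x(L)\}_{L\subset\enu{n}}$ first and only afterwards bring in the disequilibria $y(L)$. For a genotype $\bb{i}$ write $Z(\bb{i})=\{\ell\in\enu{n}:\ i_\ell=0\}$ for its set of zero-loci; then the condition $\bb{i}_{|L}\eq 0$ is exactly $L\subseteq Z(\bb{i})$, so that $x(L)=\sum_{\bb{i}:\ Z(\bb{i})\supseteq L}z(\bb{i})$ is an up-set summation over the Boolean lattice $\mc{P}(\enu{n})$. The heart of the argument is the Möbius inversion of this relation, namely
\begin{equation}
z(\bb{0}_J,\bb{1}_{\bar J})=\sum_{I\subset\enu{n}:\ J\subseteq I}(-1)^{|I|-|J|}x(I),\label{eq:mobinvprop}
\end{equation}
valid for every $J\subset\enu{n}$.

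First I would verify \eqref{eq:mobinvprop} directly, substituting the definition of $x(I)$ and exchanging the order of summation:
\begin{align*}
\sum_{I\supseteq J}(-1)^{|I|-|J|}x(I)
&=\sum_{\bb{i}\in\mc{A}}z(\bb{i})\sum_{I:\ J\subseteq I\subseteq Z(\bb{i})}(-1)^{|I|-|J|}.
\end{align*}
The inner sum is empty unless $J\subseteq Z(\bb{i})$; writing $I=J\cup K$ with $K\subseteq Z(\bb{i})\setminus J$, it equals $\sum_{K\subseteq Z(\bb{i})\setminus J}(-1)^{|K|}=\un_{\{Z(\bb{i})=J\}}$ by the standard alternating-sum identity. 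Hence only the genotype $(\bb{0}_J,\bb{1}_{\bar J})$ survives, which proves \eqref{eq:mobinvprop}.

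Next I would extend the disequilibria by setting $y(L):=0$ whenever $|L|\leq 1$, so that, using $x(\emptyset)=1$ and $x(\{i\})=x(i)$, the identity $x(I)=\prod_{\ell\in I}x(\ell)-y(I)$ holds for \emph{every} $I\subset\enu{n}$. Inserting this into \eqref{eq:mobinvprop} and splitting the sum gives
\begin{multline*}
z(\bb{0}_J,\bb{1}_{\bar J})=\sum_{I\supseteq J}(-1)^{|I|-|J|}\prod_{\ell\in I}x(\ell)\\
-\sum_{I\supseteq J,\ |I|\geq 2}(-1)^{|I|-|J|}y(I),
\end{multline*}
where the restriction $|I|\geq 2$ in the second sum is free, since $y$ vanishes on smaller sets. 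It remains to identify the first sum with $\prod_{i\in J}x(i)\prod_{i\in\bar J}(1-x(i))$: writing $I=J\cup K$ with $K\subseteq\bar J$, so that $|I|-|J|=|K|$, and factoring out $\prod_{i\in J}x(i)$, the remaining factor is $\sum_{K\subseteq\bar J}(-1)^{|K|}\prod_{k\in K}x(k)=\prod_{i\in\bar J}(1-x(i))$ by the elementary product expansion. This is precisely the asserted formula \eqref{invchgv}.

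I do not expect a genuine obstacle here; the argument is a bookkeeping exercise in Möbius inversion. The only points that require care are getting the direction of the inversion right, that is, summing over supersets $I\supseteq J$ to match the up-set structure of $L\mapsto x(L)$, and treating the small sets $|I|\leq 1$ consistently, which the convention $y(L)=0$ for $|L|\leq 1$ handles automatically and painlessly.
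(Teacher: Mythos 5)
Your proof is correct and follows essentially the same route as the paper: both first establish the inversion identity $z(\bb{0}_J,\bb{1}_{\bar J})=\sum_{I\supseteq J}(-1)^{|I|-|J|}x(I)$ and then substitute $x(I)=\prod_{\ell\in I}x(\ell)-y(I)$, using the expansion $\sum_{K\subseteq\bar J}(-1)^{|K|}\prod_{k\in K}x(k)=\prod_{i\in\bar J}(1-x(i))$. The only difference is that the paper proves the inversion identity by downward induction on $|J|$ starting from $J=\enu{n}$, whereas you obtain it directly by exchanging the order of summation and invoking $\sum_{K\subseteq S}(-1)^{|K|}=\un_{\{S=\emptyset\}}$ — a slightly more direct derivation of the same intermediate step.
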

\begin{proof}
First, by induction on $n-|J|$, we  show that 
\begin{equation}\label{decomp1}
z(\bb{0}_{J},\bb{1}_{\bar{J}})=\sum_{I\subset\enu{n} \st J\subset I}(-1)^{|I|-|J|}x(I). 
 \end{equation}
Since $z(\bb{0})=x(\enu{n})$, the equality \eqref{decomp1} holds for $J=\enu{n}$. \\
Let $m\in\enu{n}$. Assume that the formula \eqref{decomp1} holds for every subset $J$ of $\enu{n}$ such that $|J|\geq m$. Let $K$ be a subset of $\enu{n}$ with $m-1$ elements. 
  $$
z(\bb{0}_{K},\bb{1}_{\bar{K}})=x(K)-\sum_{L\subset \enu{n} \st K\subsetneq  L}z(\bb{0}_{L},\bb{1}_{\bar{L}})
$$
We apply the formula  \eqref{decomp1} to every term in the sum  and we 
invert the double sum we have obtained: 
$$
z(\bb{0}_{K},\bb{1}_{\bar{K}})=x(K)-\sum_{H\subset\enu{n} \st K\subsetneq  H } x(H) \Big(\sum_{L\subset \enu{n} \st K\subsetneq  L\subset H}(-1)^{|H|-|L|}\Big).
$$
The sum between parentheses is equal to $$\sum_{v=1}^{|H|-|K|}(-1)^{|H|-|K|-v}\binom{|H|-|K|}{v}=-(-1)^{|H|-|K|}.$$
Thus the formula  \eqref{decomp1} is also satisfied for the subset $K$ 
which completes the induction. \\
To complete the proof,  we replace $x(I)$ in \eqref{decomp1} with $\prod_{i\in I}x(i)-y(I)$ for every subset $I$ having at least two elements and use the following equality:
\[
\sum_{I\subset \enu{n},\ J\subset I}\!\!\!(-1)^{|I|-|J|}\prod_{i\in I}x(i)=\prod_{j\in J}x(j)\Big(\!\sum_{L\subset \enu{n}\setminus J}\!\!\!(-1)^{|L|}\prod_{\ell\in L}x(\ell)\Big) =\prod_{j\in J}x(j)\prod_{i\in\enu{n}\setminus J}(1-x(i)).
\]
\end{proof}
To shorten the notation,  set 
\begin{itemize}
 \item $\Lambda_{u}=\enu{n}\setminus\{u\}$ for $u\in\enu{n}$
\item $\Pi_J(v)=\prod_{j\in J}v(j)$  for $v\in [0,1]^n$ and $J\in\mc{P}(\enu{n})$ with the usual convention $\Pi_{\emptyset}=1$, 
\item $s_{I,J}=s_{\bb{i},\bb{j}}$ for $\bb{i}=(\bb{0}_I,\bb{1}_{\bar{I}})$ and $\bb{j}=(\bb{0}_J,\bb{1}_{\bar{J}})$. 
\end{itemize}
With this notation, for every $J\subset \Lambda_u$, 
\begin{itemize}
\item $z(\bb{0}_{J},\bb{1}_{\bar{J}})=(1-x(u))\Pi_J(x)\Pi_{\Lambda_u\setminus J}(1-x)-R_J(y)$,
\item $ z(\bb{0}_{J\cup\{u\}},\bb{1}_{\overline{J\cup\{u\}}})=x(u)\Pi_J(x)\Pi_{\Lambda_u\setminus J}(1-x)-R_{J\cup \{u\}}(y)$, 
\end{itemize}
where $R_{J}(y)$ and $R_{J\cup \{u\}}(y)$ denote polynomial functions that vanish at  $y\equiv 0$. Therefore,  
\begin{multline*}G_u(z)=x(u)(1-x(u))\sum_{J\subset \Lambda_u}\sum_{H\subset \Lambda_u}\Pi_J(x)\Pi_H(x)\Pi_{\Lambda_u\setminus J}(1-x)\Pi_{\Lambda_u\setminus H}(1-x)\times\\\Big(x(u)(s_{J\cup\{u\},H\cup\{u\}}-s_{J,H\cup\{u\}})+(1-x(u))(s_{J\cup\{u\},H}-s_{J,H})\Big)+R_u(x,y),
\end{multline*}
where $R_{u}(x,y)$ is a polynomial function in the variables $x(1),\ldots,x(n)$ and $y(I)$ for  $I\subset\enu{n}$ such that $|I|\geq 2$, that vanishes in the equilibrium manifold: $R_u(x,0)=0$. \\
The expression for $G_{u}(z)$ can be simplified by using the two assumptions 
\Href{Hpairing} on the assortment parameters, that is $s_{J,H}=s_{H,J}$ for every 
$J,H\subset \enu{n}$ and $s_{J\cup \{u\},H\cup \{u\}}=s_{J,H}$ for every $u\in\enu{n}$ 
and $J,H\subset \Lambda_u$: 
\begin{multline*}
G_u(z)=(1-2x(u))x(u)(1-x(u))\times\\\sum_{J\subset \Lambda_u}\sum_{H\subset \Lambda_u}\Pi_J(x)\Pi_H(x)\Pi_{\Lambda_u\setminus J}(1-x)\Pi_{\Lambda_u\setminus H}(1-x)(s_{J\cup\{u\},H}-s_{J,H})+R_u(x,y).
\end{multline*}
In summary, we have established the following expansion of the drift of $X^{(N)}$: 
\begin{lem}\label{driftX}
Assume that hypotheses \Href{Hrecombsym}, \Href{Hscaling}, \Href{Hlinkage} and \Href{Hpairing} hold. 
For every $i\in\enu{n}$, 
\begin{multline}N^2\EE_{z}[X^{(N)}(i)-x(i)]=(1-x(i))\mu_1-x(i)\mu_0\\+(\frac{1}{2}-x(i))x(i)(1-x(i))P_{i,s}(x)+R_i(x,y)+O(N^{-1})
  \end{multline}
uniformly on $z\in E_N$
where $$P_{i,s}(x)=\sum_{J\subset \Lambda_u}\sum_{H\subset \Lambda_u}\Pi_J(x)\Pi_H(x)\Pi_{\Lambda_u\setminus J}(1-x)\Pi_{\Lambda_u\setminus H}(1-x)(s_{J\cup\{u\},H}-s_{J,H})$$ 
and 
$R_i(x,y)$ is a polynomial function in the variables $x(1),\ldots,x(n)$ and $y(I)$ for $I\in\mc{P}(\enu{n})$ with at least two elements such that $R_i(x,\bb{0})=0$. 
\end{lem}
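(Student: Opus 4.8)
The plan is to obtain the drift of $X^{(N)}$ by summing the single-type drift of Lemma~\ref{driftZ} over the fibre $\{\bb{k}:k_i=0\}$ and then rewriting the result in the coordinates $(x,y)$. Since $X^{(N)}_1(i)=\sum_{\bb{k}\in\mc{A},\,k_i=0}Z^{(N)}_1(\bb{k})$, linearity of expectation and Lemma~\ref{driftZ} give
\[
N^2\EE_z[X^{(N)}(i)-x(i)]=N\!\!\sum_{\bb{k}\in\mc{A},\,k_i=0}\!\!B^{(0)}_{\bb{k}}(z)+\!\!\sum_{\bb{k}\in\mc{A},\,k_i=0}\!\!B^{(1)}_{\bb{k}}(z)+O(N^{-1}),
\]
uniformly in $z$, the error being uniform because the remainder in Lemma~\ref{driftZ} is. The first thing I would establish is that the $O(N)$ term vanishes, which is exactly \eqref{somib0}; this rests on the recombination identity \eqref{somq} together with \Href{Hrecombsym} (forcing $\bar r(u)=\tfrac12$). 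This cancellation is what makes the $0$-allelic frequencies the correct slow variables: without it the drift would diverge under the $N^2$ time change.

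I would then evaluate the surviving $O(1)$ term through \eqref{somib1}, treating the three lines of $B^{(1)}_{\bb{k}}$ separately. Two applications of \eqref{somq} turn the mutation line into $(1-x(i))\mu_1-x(i)\mu_0$; the symmetry $s_{\bb{j},\bb{k}}=s_{\bb{k},\bb{j}}$ annihilates the second line; and the remaining assortment contributions assemble into $\tfrac12 G_u(z)$, with $G_u$ the quadratic expression in $z$ recorded above.

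The core of the proof is re-expressing $G_u(z)$ in the new coordinates. Here I would substitute the inversion formula of Lemma~\ref{chgtvar}: for $J\subset\Lambda_u$ the frequency $z(\bb{0}_J,\bb{1}_{\bar J})$ decomposes as $(1-x(u))\Pi_J(x)\Pi_{\Lambda_u\setminus J}(1-x)$ minus a disequilibrium term $R_J(y)$ vanishing at $y\equiv 0$, while $z(\bb{0}_{J\cup\{u\}},\bb{1}_{\overline{J\cup\{u\}}})$ carries the factor $x(u)$ in place of $1-x(u)$. Inserting these into the double sum defining $G_u$, the product of the two leading parts produces the common factor $x(u)(1-x(u))$ and the coefficient $x(u)(s_{J\cup\{u\},H\cup\{u\}}-s_{J,H\cup\{u\}})+(1-x(u))(s_{J\cup\{u\},H}-s_{J,H})$, while every cross-term containing at least one of the $R_J$'s collects into a single polynomial $R_u(x,y)$ with $R_u(x,0)=0$.

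The last step is the symmetry collapse. Both parts of \Href{Hpairing} enter: $s_{J\cup\{u\},H\cup\{u\}}=s_{J,H}$ rewrites the first bracket as $s_{J,H}-s_{J,H\cup\{u\}}$, and $s_{J,H\cup\{u\}}=s_{J\cup\{u\},H}$ (again a consequence of \Href{Hpairing}) turns it into $-(s_{J\cup\{u\},H}-s_{J,H})$; the coefficient therefore factors as $(1-2x(u))(s_{J\cup\{u\},H}-s_{J,H})$. This extracts the prefactor $\tfrac12-x(i)$ and leaves exactly $(\tfrac12-x(i))x(i)(1-x(i))P_{i,s}(x)$ with $P_{i,s}$ as stated, the $O(N^{-1})$ error being inherited uniformly from Lemma~\ref{driftZ}. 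I expect this coordinate-change-and-symmetry step to be the main obstacle: the bookkeeping that isolates the manifold-vanishing remainder $R_u(x,y)$ from the principal term, and the two-fold use of \Href{Hpairing} that produces the $(1-2x(u))$ cancellation, are the delicate points, whereas the mutation and leading-order computations are routine consequences of \eqref{somq}.
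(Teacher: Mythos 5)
Your proposal reproduces the paper's own proof step for step: summing Lemma~\ref{driftZ} over the fibre $\{\bb{k}:k_i=0\}$, killing the $O(N)$ term via \eqref{somib0} (which rests on \eqref{somq} and $\bar r(u)=\tfrac12$ from \Href{Hrecombsym}), evaluating the three lines of $B^{(1)}$ to get the mutation term, a vanishing second line, and $\tfrac12 G_u(z)$, and finally changing coordinates with Lemma~\ref{chgtvar} and collapsing the bracket with \Href{Hpairing}. The structure, the intermediate expressions, and the identification of the delicate step are all exactly as in \Sref{sectproofcond}.

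The one place where your justification does not hold as stated is the final ``symmetry collapse''. The identity $s_{J,H\cup\{u\}}=s_{J\cup\{u\},H}$ is \emph{not} a pointwise consequence of \Href{Hpairing}: under that assumption $s_{I,K}$ depends only on the unordered pair $\{I\setminus K,\,K\setminus I\}$, and $s_{J,H\cup\{u\}}$ corresponds to $\{J\setminus H,\ (H\setminus J)\cup\{u\}\}$ while $s_{J\cup\{u\},H}$ corresponds to $\{(J\setminus H)\cup\{u\},\ H\setminus J\}$; these unordered pairs differ whenever $J\setminus H\neq H\setminus J$ (take $n=3$, $u=1$, $J=\{2\}$, $H=\{3\}$), so the two parameters need not coincide. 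What is true, and what the computation actually needs, is the identity \emph{after summation}: the weight $W(J,H)=\Pi_J(x)\Pi_H(x)\Pi_{\Lambda_u\setminus J}(1-x)\Pi_{\Lambda_u\setminus H}(1-x)$ is invariant under $J\leftrightarrow H$, so relabelling the double sum and using $s_{I,K}=s_{K,I}$ turns $\sum_{J,H}W(J,H)\,s_{J,H\cup\{u\}}$ into $\sum_{J,H}W(J,H)\,s_{J\cup\{u\},H}$. With that one-line repair the bracket factors as $(1-2x(u))(s_{J\cup\{u\},H}-s_{J,H})$ exactly as you claim, and the rest of your argument goes through; everything else (the $O(N)$ cancellation, the mutation term, the isolation of the manifold-vanishing remainder $R_i(x,y)$) is correct and uniform in $z$ as required.
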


\paragraph{Condition (b).}
Computations similar to those used to obtain \eqref{somib0} lead to the following expansion 
of the second moments of $X^{(N)}_1-x$, showing that condition (b) holds:
\begin{lem}
 $N^2\EE_z\big[(X^{(N)}_{1}(i)-x(i))(X^{(N)}_{1}(j)-x(j))\big]=a_{i,j}(x,y)+O(N^{-1})$,
with $$\begin{cases}a_{i,i}(x,y)=
 x(i)(1-x(i))+O(N^{-1}) \\
  a_{i,j}(x,y)=-2\Big(\underset{I\subset\enu{n}\setminus\{i,j\}}{\sum}r_I\Big) y(\{i,j\})+O(N^{-1}) \mbox{ if }i\neq j
\end{cases}$$
uniformly on $z\in E_N$. 
\end{lem}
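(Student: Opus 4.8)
The plan is to bypass the fourth-moment estimate altogether and compute the covariance directly from the one-step dynamics, in the same spirit as the derivation of \eqref{somib0}. The starting observation is that in a single time step only the first chosen individual, of type $\bb{I}_1$, is replaced, by an offspring of type $\bb{O}$, so that $Z^{(N)}_1(\bb{k})-z(\bb{k})=\frac{1}{N}(\un_{\{\bb{O}=\bb{k}\}}-\un_{\{\bb{I}_1=\bb{k}\}})$. Summing over $\{\bb{k}:k_i=0\}$ collapses the indicators to
$$N\big(X^{(N)}_1(i)-x(i)\big)=\un_{\{O_i=0\}}-\un_{\{(I_1)_i=0\}},$$
where $O_i$ and $(I_1)_i$ denote the alleles at locus $i$. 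Hence $N^2(X^{(N)}_1(i)-x(i))(X^{(N)}_1(j)-x(j))$ is a product of four $\{0,1\}$-valued indicators, whose expectation I would evaluate by expanding the product. (Equivalently one reaches the same four terms by substituting \eqref{momentz1} with $r=2$ and \eqref{momentz2} with $r=u=1$ into $X^{(N)}_1(i)-x(i)=\sum_{\ell_i=0}(Z^{(N)}_1(\bb{\ell})-z(\bb{\ell}))$; I find the indicator form more transparent, and note that the many off-diagonal $f_N$ terms cancel by the relabelling symmetry $\bb{\ell}\leftrightarrow\bb{m}$.)

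Next I would pin down the leading-order joint law of $((I_1)_i,(I_1)_j,O_i,O_j)$. By \Href{Hscaling} the selection weight is $w^{(N)}=1+O(1/N)$ and $\mu^{(N)}(\bb{\ell},\bb{j})=\un_{\{\bb{\ell}=\bb{j}\}}+O(1/N)$, both uniformly in $z$; so to leading order $\bb{I}_1$ and $\bb{I}_2$ are two \emph{independent} draws from $z$, mutation is absent, and $\bb{O}$ is obtained from them through a recombination mask $L$ of law $(r_L)$. The key structural point is that the two parents are independent of each other but each carries a genuinely joint law at loci $i,j$: one has $\PP((I_1)_i=0,(I_1)_j=0)=x(\{i,j\})$, whereas reading one locus off each parent factorises as $x(i)x(j)$. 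This joint-versus-product discrepancy is exactly where the linkage disequilibrium $y(\{i,j\})=x(i)x(j)-x(\{i,j\})$ enters.

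For $i=j$ the expansion gives $\EE[(\un_{\{O_i=0\}}-\un_{\{(I_1)_i=0\}})^2]=\EE[\un_{\{O_i=0\}}]+\EE[\un_{\{(I_1)_i=0\}}]-2\,\EE[\un_{\{O_i=0,(I_1)_i=0\}}]$, and \eqref{somq} with $\bar{r}(i)=1/2$ (from \Href{Hrecombsym}) reduces the three terms to $x(i)$, $x(i)$ and $\frac12(x(i)+x(i)^2)$, leaving $x(i)(1-x(i))$. For $i\neq j$ I would split $\PP(O_i=0,O_j=0)$ according to whether the mask sends loci $i,j$ to the same parent or to different parents, using $\sum_{L\ni u}r_L=1/2$ and $r_L=r_{\bar L}$. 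Writing $\rho$ for the probability $2\sum_{I\subset\enu{n}\setminus\{i,j\}}r_I$ that $i$ and $j$ are inherited from the same parent, the four quantities $\PP(O_i=0,O_j=0)$, $-\PP(O_i=0,(I_1)_j=0)$, $-\PP((I_1)_i=0,O_j=0)$, $\PP((I_1)_i=0,(I_1)_j=0)$ evaluate, via \eqref{somq}, to $\rho\,x(\{i,j\})+(1-\rho)x(i)x(j)$, then $-\frac12(x(\{i,j\})+x(i)x(j))$ twice, and finally $x(\{i,j\})$. Summing, the coefficient of $x(\{i,j\})$ is $\rho$ and that of $x(i)x(j)$ is $-\rho$, so the total is $-\rho\,y(\{i,j\})=-2\big(\sum_{I\subset\enu{n}\setminus\{i,j\}}r_I\big)y(\{i,j\})$, the claimed coefficient.

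The main obstacle is the off-diagonal bookkeeping: one must keep $x(\{i,j\})$ and $x(i)x(j)$ distinct throughout — they agree only on the Wright manifold — and weight them correctly by the recombination probabilities so that their coefficients come out as $+\rho$ and $-\rho$. Everything discarded is $O(1/N)$ uniformly in $z$, since it arises from the bounded corrections to $w^{(N)}$ and $\mu^{(N)}$ multiplied by indicators bounded by $1$; this uniformity is precisely what condition (b) of Theorem~\ref{ethier/nagylaki thm} demands.
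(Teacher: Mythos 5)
Your proof is correct and is essentially the paper's own argument in probabilistic dress: your four indicator expectations $\PP(O_i=0,O_j=0)$, $\PP((I_1)_i=0,(I_1)_j=0)$, $-\PP(O_i=0,(I_1)_j=0)$, $-\PP((I_1)_i=0,O_j=0)$ correspond term by term to the paper's sums $T^{(1)}_{i,j}+T^{(2)}_{i,j}-T^{(3)}_{i,j}-T^{(3)}_{j,i}$, and you use the same ingredients (the symmetry $r_I=r_{\bar I}$, the marginal recombination probability $1/2$ from \eqref{somq}, and the split according to whether loci $i,j$ are inherited from the same parent). The only cosmetic difference is that you start from the one-step representation $N(X^{(N)}_1(i)-x(i))=\un_{\{O_i=0\}}-\un_{\{(I_1)_i=0\}}$ rather than from the moment formulae \eqref{momentz1}--\eqref{momentz2}, which yields the same computation.
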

\begin{proof}
Let $i,j\in\enu{n}$ and $z\in E_N$. By definition of $X^{(N)}$, 
\begin{multline*}
N^2\EE_z\big[(X^{(N)}_{1}(i)-x(i))(X^{(N)}_{1}(j)-x(j))\big]\\
=N^2\sum_{\bb{k}\in\mc{A},\ k_i=0\ }\sum_{\bb{\ell}\in\mc{A},\ \ell_j=0}
\EE_z\big[(Z^{(N)}_{1}(\bb{k})-z(\bb{k}))(Z^{(N)}_{1}(\bb{\ell})-z(\bb{\ell}))\big]
\end{multline*}
Using formulae \eqref{momentz1} and \eqref{momentz2} and assumption  \Href{Hscaling}, we obtain
\begin{multline*}
N^2\EE_z\big[(X^{(N)}_{1}(i)-x(i))(X^{(N)}_{1}(j)-x(j))\big]\\=\sum_{\bb{k}\in\mc{A}}\sum_{ \bb{\ell}\in\mc{A}}(f_{N}(z,\bb{\ell},\bb{k})+f_N(z,\bb{k},\bb{\ell}))(\un_{\{k_i=0, k_j=0\}}-\un_{\{k_i=0, \ell_j=0\}})\\
=T^{(1)}_{i,j}+T^{(2)}_{i,j}-T^{(3)}_{i,j}-T^{(3)}_{j,i}+O(N^{-1}),
\end{multline*}
where 
\begin{align*}  T^{(1)}_{i,j}&=
\sum_{\bb{t}\in\mc{A}}z(\bb{t})\sum_{\bb{\ell}\in\mc{A}}z(\bb{\ell})\sum_{\bb{k}\in\mc{A},\  k_i=k_j=0}q((\bb{\ell},\bb{t});\bb{k}),\\
 T^{(2)}_{i,j}&=
\sum_{\bb{t}\in\mc{A}}z(\bb{t})\sum_{\bb{k}\in\mc{A},\  k_i=k_j=0}z(\bb{k})\sum_{\bb{\ell}\in\mc{A}}q((\bb{k},\bb{t});\bb{\ell}),\\
  T^{(3)}_{i,j}&=
\sum_{\bb{t}\in\mc{A}}z(\bb{t})\sum_{\bb{\ell}\in\mc{A},\  \ell_j=0}z(\bb{\ell})\sum_{\bb{k}\in\mc{A},\ k_i=0}q((\bb{\ell},\bb{t});\bb{k}).
\end{align*}
With the convention $x(\{i,j\})=x(i)$ if $i=j$, we have 
$T^{(2)}_{i,j}=x(\{i,j\})$ and 
it follows from assumption \Href{Hrecombsym} ($r_I=r_{\bar{I}}$ for every $I\subset\enu{n}$) that 
\begin{align*}  T^{(1)}_{i,j}&=
x(i)x(j)+\sum_{I\subset \enu{n}}r_I\big(\un_{\{i\in I, j\in I\}}+\un_{\{i\not\in I, j\not\in I\}}\big)\big(x(\{i,j\})-x(i)x(j)\big)\\
&=x(i)x(j)+2\Big(\sum_{I\subset \enu{n}\setminus\{i,j\}}r_I\Big)\Big(x(\{i,j\})-x(i)x(j)\Big),\\
  T^{(3)}_{i,j}&=x(\{i,j\})+\Big(\sum_{I\subset\enu{n}\setminus\{i\}}r_{I}\Big)\big(x(i)x(j)-x(\{i,j\})\big)
=\frac{1}{2}\big(x(i)x(j)+x(\{i,j\})\big).
\end{align*}
Therefore, for every $i,j\in\enu{n}$,  
\begin{multline*}N^2\EE_z\big[(X^{(N)}_{1}(i)-x(i))(X^{(N)}_{1}(j)-x(j))\big]\\ =2\Big(\sum_{I\subset\enu{n}\setminus\{i,j\}}r_I\Big)\big(x(\{i,j\})-x(i)x(j)\big)+O(N^{-1}). \end{multline*}
If $i=j$ then $x(\{i,j\})-x(i)x(j)=x(i)(1-x(i))$ and ${\displaystyle \sum_{I\subset\enu{n}\setminus\{i,j\}}r_I=\frac{1}{2}}$. 
\end{proof}

\paragraph{Condition (d).}
Let $I$ be a subset of $\enu{n}$ with at least two elements. To compute the drift of $Y^{(N)}(I)$, 
we use the following lemma and formulae \eqref{momentz1}, \eqref{momentz2} and \eqref{momentz3} describing the moments of $Z^{(N)}_{1}-z$.
\begin{lem}\label{diffprod}
 Let $J$ be a finite set. Consider two families of reals  $\{a_j, j\in J\}$ and $\{b_j, j\in J\}$. The following identity holds:
\begin{equation}\label{eqdiffprod}
 \prod_{j\in J}a_j-\prod_{j\in J}b_j=\sum_{K\subset J,\ K\neq \emptyset\ }\prod_{k\in K}(a_k-b_k)\prod_{\ell \in J\setminus K}b_{\ell}.
\end{equation}
\end{lem}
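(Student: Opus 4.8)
The plan is to prove the identity by a single application of distributivity, based on the elementary substitution $a_j = b_j + (a_j - b_j)$ in each factor, together with the empty-product convention $\prod_{\emptyset} = 1$. The whole statement is purely formal algebra, so there is no analytic obstacle; the only care required is in the bookkeeping of the subset indexing.

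Concretely, I would start from
\[
\prod_{j\in J}a_j = \prod_{j\in J}\big(b_j + (a_j - b_j)\big),
\]
and expand the product on the right by distributivity. Each of the $|J|$ factors contributes either $b_j$ or $(a_j - b_j)$, and a choice of which factors contribute the difference is exactly the choice of a subset $K\subset J$. Hence
\[
\prod_{j\in J}a_j = \sum_{K\subset J}\prod_{k\in K}(a_k - b_k)\prod_{\ell\in J\setminus K}b_\ell.
\]
The summand corresponding to $K=\emptyset$ is $\prod_{\ell\in J}b_\ell$, so transposing it to the left-hand side yields precisely
\[
\prod_{j\in J}a_j - \prod_{j\in J}b_j = \sum_{K\subset J,\ K\neq\emptyset}\prod_{k\in K}(a_k - b_k)\prod_{\ell\in J\setminus K}b_\ell,
\]
which is \eqref{eqdiffprod}.

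Should one prefer a more pedestrian argument, the same result follows by induction on $|J|$. The base case $|J|=1$ is the trivial identity $a_{j_0}-b_{j_0}=a_{j_0}-b_{j_0}$. For the inductive step, fix $j_0\in J$ and write
\[
\prod_{j\in J}a_j - \prod_{j\in J}b_j = (a_{j_0}-b_{j_0})\prod_{j\in J\setminus\{j_0\}}a_j + b_{j_0}\Big(\prod_{j\in J\setminus\{j_0\}}a_j - \prod_{j\in J\setminus\{j_0\}}b_j\Big);
\]
expanding the first product over subsets of $J\setminus\{j_0\}$ and applying the induction hypothesis to the bracketed difference recovers exactly the sum over all nonempty $K\subset J$, split according to whether $j_0\in K$ or not. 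Either route closes the argument with no genuine difficulty.
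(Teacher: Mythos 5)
Your proof is correct: writing $a_j=b_j+(a_j-b_j)$, expanding by distributivity over the choice of subset $K$, and moving the $K=\emptyset$ term to the left-hand side is exactly the right (and standard) argument, and your inductive alternative is also sound. The paper itself offers no proof of this lemma, treating it as elementary, so there is nothing to compare against; your one-line distributivity argument is the natural way to fill that gap.
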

Computations yield:
\begin{multline}
N \EE_z[Y^{(N)}_1(I)-y(I)]=\sum_{i\in I}\Big(\prod_{\ell\in I\setminus\{ i\}}x(\ell)\sum_{\bb{j}\in\mc{A},\ j_i=0}B^{(0)}_{\bb{j}}(z)\Big)\\-\sum_{\bb{j}\in\mc{A},\ \bb{j}_{|I}\eq 0}B^{(0)}_{\bb{j}}(z)+O(N^{-1}).
\end{multline}
uniformly on $z\in E_N$. 
As we have shown that $\sum_{\bb{j}\in\mc{A},\ j_i=0}B^{(0)}_{\bb{j}}(z)=0$ for every $i\in\enu{n}$ (equation \eqref{somib0}), 
\begin{equation}\label{driftY}
 N\EE_z[Y^{(N)}_1(I)-y(I)]=-\sum_{\bb{j}\in\mc{A},\ \bb{j}_{|I}\eq 0}B^{(0)}_{\bb{j}}(z)+O(N^{-1})
\end{equation}
uniformly on $z\in E_N$.\\
Direct computations provide the following expression of the sum on the right-hand side of \eqref{driftY} using the variables $x(L)=\sum_{\bb{j}\in\mc{A},\ j_{|L}\eq 0}x(j)$ for $L\in\mathcal{P}(\enu{n})$:
\begin{equation}
\label{eqBI}
 \sum_{\bb{j}\in\mc{A},\ \bb{j}_{|I}\eq 0}B^{(0)}_{\bb{j}}(z)=\sum_{L\subset \enu{n} \st I\cap L\neq \emptyset,\ I\cap \bar{L}\neq \emptyset} r_L
\Big(x(I\cap L)x(I\cap \bar{L})-x(I)\Big)
\end{equation}
To obtain an expression for $\EE_z[Y^{(N)}_1(I)-y(I)]$ in the new coordinates, it remains to 
replace each term $x(L)$ for $|L|\geq 2$ with $\prod_{\ell\in L}x(\ell)-y(L)$ in \eqref{eqBI}. 
This leads to the following lemma and shows that condition (d) holds. 
\begin{lem}
For a subset $I$ of $\enu{n}$ having at least two elements, 
\begin{equation}
N \EE_z[Y^{(N)}_1(I)-y(I)]=c_{n,I}(x,y)+O(N^{-1})
\end{equation}
where
\begin{multline*}c_{n,I}(x,y)=-\Big(\sum_{{L\subset \enu{n},}\atop {L\cap I\neq \emptyset,\  \bar{L}\cap I\neq \emptyset}}r_L\Big) y(I)-\un_{\{|I|\geq 4\}}\!\!\!\sum_{{L\subset\enu{n},}\atop {|I\cap L|\geq 2,|I\cap \bar{L}|\geq 2}} r_L y(I\cap L)y(I\cap \bar{L})\\+\un_{\{|I|\geq 3\}}\!\!\!\sum_{{L \subset\enu{n},}\atop {|I\cap L|\geq 2,|I\cap\bar{L}|\geq 1}}(r_L+r_{\bar{L}}) y(L\cap I)\prod_{\ell\in I\cap\bar{L}}x(\ell).
\end{multline*}
\end{lem}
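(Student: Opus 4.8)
The plan is to carry out exactly the substitution announced just before the statement: in the expression \eqref{eqBI} for $\sum_{\bb{j}_{|I}\eq 0}B^{(0)}_{\bb{j}}(z)$, replace every factor $x(L)$ with $|L|\geq 2$ by $\prod_{\ell\in L}x(\ell)-y(L)$, and then regroup the result into the three terms of $c_{n,I}$. First I would reindex \eqref{eqBI}: the summation runs over subsets $L\subset\enu{n}$ that split $I$ (that is, $I\cap L\neq\emptyset$ and $I\cap\bar{L}\neq\emptyset$), so I would parametrise such an $L$ by the ordered partition $(A,B)=(I\cap L,\,I\cap\bar{L})$ of $I$ into two nonempty disjoint blocks together with the unconstrained choice of $L$ outside $I$. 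Adopting the convention $y(S)=0$ whenever $|S|\leq 1$, each of $x(A)$, $x(B)$ and $x(I)$ can then be written uniformly as $\prod_{\ell}x(\ell)-y(\cdot)$.

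The key simplification comes from the disjointness $A\cup B=I$, which gives $\prod_{\ell\in A}x(\ell)\prod_{\ell\in B}x(\ell)=\prod_{\ell\in I}x(\ell)$, so the leading products cancel and
$$x(I\cap L)x(I\cap\bar{L})-x(I)=y(I)-y(A)\prod_{\ell\in B}x(\ell)-y(B)\prod_{\ell\in A}x(\ell)+y(A)y(B).$$
I would split the sum into the four corresponding pieces. The $y(I)$ piece, summed over all $(A,B)$ and over $L$, reconstitutes $y(I)\sum_{L\cap I\neq\emptyset,\ \bar{L}\cap I\neq\emptyset}r_L$; through the overall minus sign in \eqref{driftY} this becomes the first term of $c_{n,I}$. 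The $y(A)y(B)$ piece is nonzero only when both blocks have at least two elements, which forces $|I|\geq 4$, and it collapses back to $\sum_{|I\cap L|\geq 2,\ |I\cap\bar{L}|\geq 2}r_L\,y(I\cap L)y(I\cap\bar{L})$, giving the second term.

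The only genuinely delicate step — the main obstacle — is merging the two cross pieces $-y(A)\prod_{\ell\in B}x(\ell)$ and $-y(B)\prod_{\ell\in A}x(\ell)$ into a single sum with coefficient $r_L+r_{\bar{L}}$. Since $y(A)$ (resp. $y(B)$) vanishes unless $|A|\geq 2$ (resp. $|B|\geq 2$), both sub-sums contribute only when $|I|\geq 3$, which produces the indicator $\un_{\{|I|\geq 3\}}$. The first sub-sum already has the target shape $y(I\cap L)\prod_{\ell\in I\cap\bar{L}}x(\ell)$ with coefficient $r_L$; for the second, which carries $y(I\cap\bar{L})\prod_{\ell\in I\cap L}x(\ell)$, I would apply the substitution $L\mapsto\bar{L}$, invoking assumption \Href{Hrecombsym} ($r_L=r_{\bar{L}}$) purely to relabel the index so that the summand matches the first sub-sum but with coefficient $r_{\bar{L}}$. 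Adding the two then yields the factor $r_L+r_{\bar{L}}$ and the third term of $c_{n,I}$. Beyond \eqref{eqBI} and the recombination symmetry no new analytic input is required: the entire argument is careful bookkeeping of the index substitution and of the support conditions $|S|\geq 2$ on $y$, which are precisely what generate the indicators $\un_{\{|I|\geq 3\}}$ and $\un_{\{|I|\geq 4\}}$.
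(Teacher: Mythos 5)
Your proposal is correct and follows essentially the same route as the paper, which simply performs the substitution $x(L)\mapsto\prod_{\ell\in L}x(\ell)-y(L)$ in \eqref{eqBI} and regroups; your expansion of $x(I\cap L)x(I\cap\bar{L})-x(I)$ into the four pieces and the reindexing $L\mapsto\bar{L}$ for the cross terms is exactly the bookkeeping the paper leaves implicit. (The only tiny remark: the relabelling $L\mapsto\bar{L}$ is a pure change of summation variable and does not actually need \Href{Hrecombsym}, which is why the coefficient is written as $r_L+r_{\bar{L}}$ rather than $2r_L$.)
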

\paragraph{Condition (f). } 
The following lemma shows that the condition (f) holds under the assumption \Href{Hlinkage}:
\begin{lem}
For two distinct loci $k,\ell$, let $r_{k,\ell}$ denote 
the probability that the offspring does not inherit the genes at the loci $k$ and $\ell$ from the same parent, $$r_{k,\ell}= \sum_{I\subset\enu{n},\ k\in I \text{ and } \ell\not\in I}(r_I+r_{\bar{I}}),$$
and set $r(n)=\min(r_{k,h}\ k,h\in\enu{n} \text{ and } h\neq k)$.\\
If $r(n)>0$ then the following system of differential equations
$$(S_{n,I})\left\{\begin{array}{ll}&
\frac{dv_{n,I}}{dt}(t,x,y)=c_{n,I}(x,v_{n,I}(t,x,y)) \\
&v_{n,I}(0,x,y)=y(I) 
\end{array}\right. \forall I\subset \enu{n} \st \ |I|\geq 2 $$
has a unique solution $v_n=\{v_{n,I},\ I\subset \enu{n} \text{ and } |I|\geq 2\}$ which is of the form: 
$$v_{n,I}(t,x,y)=\exp(-r(n)t)f_{n,I}(t,x,y),$$ where $f_{n,I}$ is a continuous and bounded function 
on $\R\times [0,1]^n\times [-1,1]^{2^n-n-1}$ so that the value of $f_{n,I}(t,x,y)$ depends on 
$x$ and $y$ only via the coordinates $x(i)$ for $i\in I$ and $y(J)$ for $J\subset I$ such that $|J|\geq 2$. 
\end{lem}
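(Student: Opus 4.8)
The plan is to expose a triangular (hierarchical) structure hidden in the functions $c_{n,I}$ and to solve the system $(S_{n,I})$ one level $|I|$ at a time, by induction on $|I|$. First I would record the structural facts about $c_{n,I}$. Writing $\lambda_I=\sum_{L\subset\enu{n}:\ L\cap I\neq\emptyset,\ \bar{L}\cap I\neq\emptyset}r_L$ for the coefficient of the first (linear) term, I observe that $c_{n,I}(x,y)$ depends on the variables $y(J)$ only through those with $J\subset I$ and $|J|\geq 2$, and that the \emph{only} dependence on $y(I)$ itself is through the term $-\lambda_I\,y(I)$. Indeed, in the quadratic term the blocks $I\cap L$ and $I\cap\bar{L}$ partition $I$ into two pieces each of size $\geq 2$, so both are proper subsets of $I$; and in the last term $|I\cap\bar{L}|\geq 1$ forces $L\cap I\subsetneq I$. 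Consequently the subsystem $\{v_{n,J}:\ J\subset I\}$ is closed, and I can solve in order of increasing cardinality. Likewise the only $x$-variables appearing in $c_{n,I}$ are the $x(\ell)$ with $\ell\in I$, which will produce the asserted dependence of $f_{n,I}$.

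The second ingredient is a combinatorial lower bound on $\lambda_I$. For a pair one has $\lambda_{\{k,\ell\}}=r_{k,\ell}\geq r(n)$; for larger $I$ I would double count
$$\sum_{\{k,\ell\}\subset I}r_{k,\ell}=\sum_{L\subset\enu{n}}r_L\,|L\cap I|\,|\bar{L}\cap I|,$$
since a set $L$ separates exactly $|L\cap I|\,|\bar{L}\cap I|$ pairs of $I$. With $m=|I|$ and $j=|L\cap I|$, the elementary bound $j(m-j)\leq m^2/4$ gives $\sum_{\{k,\ell\}\subset I}r_{k,\ell}\leq (m^2/4)\lambda_I$, whence $\lambda_I\geq \tfrac{4}{m^2}\binom{m}{2}r(n)=\tfrac{2(m-1)}{m}\,r(n)$. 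For $m\geq 3$ this is the \emph{strict} inequality $\lambda_I>r(n)$, which is exactly the gap needed below to prevent a resonance between the damping rate $\lambda_I$ and the decay rate $r(n)$ that I factor out.

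Then I would run the induction. For $|I|=2$ the indicators $\un_{\{|I|\geq 3\}}$ and $\un_{\{|I|\geq 4\}}$ vanish, so $(S_{n,I})$ reduces to $\dot v_{n,I}=-r_{k,\ell}v_{n,I}$, with solution $v_{n,I}=y(\{k,\ell\})e^{-r_{k,\ell}t}=e^{-r(n)t}\big(y(\{k,\ell\})e^{-(r_{k,\ell}-r(n))t}\big)$, of the claimed form. For $|I|=m\geq 3$, assuming the solutions $v_{n,J}=e^{-r(n)t}f_{n,J}$ with $f_{n,J}$ bounded are already built for $J\subsetneq I$, the equation for $v_{n,I}$ is the scalar linear ODE $\dot v_{n,I}=-\lambda_I v_{n,I}+h_I(t,x)$, whose forcing $h_I$ is a known continuous function assembled from the smaller $v_{n,J}$ and from $x(\ell)$, $\ell\in I$. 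Variation of constants yields existence, uniqueness and global existence:
$$v_{n,I}(t,x,y)=e^{-\lambda_I t}y(I)+\int_0^t e^{-\lambda_I(t-\tau)}h_I(\tau,x)\,d\tau.$$
Writing $h_I(\tau,x)=e^{-r(n)\tau}g_I(\tau,x)$ and pulling out $e^{-r(n)t}$ gives
$$f_{n,I}(t,x,y)=e^{-(\lambda_I-r(n))t}y(I)+\int_0^t e^{-(\lambda_I-r(n))(t-\tau)}g_I(\tau,x)\,d\tau.$$
Here $g_I$ is bounded on $[0,+\infty)$: the last-term contributions read $\sum_L(r_L+r_{\bar{L}})f_{n,L\cap I}\prod_{\ell\in I\cap\bar{L}}x(\ell)$, bounded by the induction hypothesis, while the quadratic contributions carry a harmless extra factor $e^{-r(n)\tau}\leq 1$. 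Since $\lambda_I-r(n)>0$ for $m\geq 3$, the convolution is dominated by $\|g_I\|_\infty/(\lambda_I-r(n))$, so $|f_{n,I}|\leq 1+\|g_I\|_\infty/(\lambda_I-r(n))$; continuity and the stated dependence on $x(i)$, $i\in I$, and $y(J)$, $J\subset I$, $|J|\geq 2$, are inherited from $h_I$ through the integral formula.

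The step I expect to be the crux is the combinatorial gap $\lambda_I>r(n)$ for $|I|\geq 3$. Without it, the resonant case $\lambda_I=r(n)$ with non-vanishing forcing would make the convolution integral grow linearly in $t$, producing a factor $\sim t\,e^{-r(n)t}$ in $v_{n,I}$ and destroying the boundedness of $f_{n,I}$; the pairs $|I|=2$ that do attain $\lambda_I=r(n)$ are saved only because their forcing vanishes identically. Once that strict inequality is in hand, everything else is a routine triangular ODE argument via variation of constants.
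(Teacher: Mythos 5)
Your proposal is correct and follows essentially the same route as the paper: exploit the triangular structure of the $c_{n,I}$ (each $c_{n,I}$ involves $y(I)$ only through the linear term $-\lambda_I y(I)$ and otherwise only strictly smaller $J\subset I$), induct on $|I|$, and solve each level by variation of constants after factoring out $e^{-r(n)t}$. The one substantive difference is that you isolate and prove the strict gap $\lambda_I>r(n)$ for $|I|\geq 3$ via the double-counting identity $\sum_{\{k,\ell\}\subset I}r_{k,\ell}=\sum_L r_L\,|L\cap I|\,|\bar{L}\cap I|$, yielding $\lambda_I\geq \tfrac{2(|I|-1)}{|I|}r(n)$. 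The paper only records the non-strict bound $\bar{r}_I\geq r(n)$, which, as you observe, would not by itself rule out the resonant case in which the non-decaying forcing (the $|I|\geq 3$ linear-in-lower-order term) makes $f_{n,I}$ grow linearly in $t$; your lemma closes that point cleanly, so your write-up is if anything slightly more complete than the paper's at the crux you identified.
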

\begin{rem}
For every subset $I\subset\enu{n}$ with two elements say $k$ and $\ell$, $$\frac{dv_{n,I}}{dt}(t,x,y)=-r_{k,\ell\ } v_{n,I}(t,x,y).$$ Therefore if $r(n)=0$ then there exists a subset $I$ of $\enu{n}$ with two elements such that   $v_{n,I}(t,x,y)=y(I)$. Thus the assumption $r(n)>0$ is a necessary condition for  the solution  of $(S_{n,I})$ to converge to $0$ as $t$ tends to $+\infty$ for any initial values. 
\end{rem}
\begin{proof}
Let $n\geq 2$ and let $I\subset\enu{n}$ be such that $|I|\geq 2$. As $c_{n,I}(x,y)$ depends only on the coordinates $x(\ell)$ for $\ell\in I$ and $y(L)$ for $L\subset I$ such that $|L|\geq 2$, we shall prove by induction on the number of elements of $I$  that for any $J\subset I$, $(S_{n,J})$ has a unique solution of the form  ${v_{n,J}(t,x,y)=\exp(-r(n)t)f_{n,J}(t,x,y)}$, where $f_{n,J}$ is a continuous and bounded function on $\R\times [0,1]^n\times [-1,1]^{2^n-n-1}$ such that the value of $f_{n,J}(t,x,y)$ depends on $x$ and $y$ only through the values of the coordinates $x(j)$ for $j\in J$ and $y(L)$ for $L\subset J$ such that $|L|\geq 2$.  
\begin{itemize}
 \item If $I$ has two elements say $k$ and $\ell$, then $(S_{n,I})$ is the following differential equation: $$\left\{\begin{array}{ll}&\frac{dv_{n,I}}{dt}(t,x,y)=-r_{k,\ell\ }v_{n,I}(t,x,y)\\
&v_{n,I}(0,x,y)=y(I)
\end{array}\right.$$
It has a unique solution $v_{n,I}(t,x,y)=y(I)e^{- r(2)t}f_{n,I}(t,x,y)$ where $$f_{n,I}(t,x,y)=e^{-(r_{k,\ell}-r(2))t}y(I).$$ By assumption $r(k,\ell)\geq r(2)>0$, hence  the result holds.
\item
Let $2 \leq m<n$. Assume that the inductive hypothesis holds for any subsets $J$ with $m$ elements. Let $I$ be a subset of $\enu{n}$ with $m+1$ elements. Then  
$$\frac{dv_{n,I}}{dt}(t,x,y)=-\bar{r}_Iv_{n,I}(t,x,y) +e^{-tr(n)}g(t,x,y)$$ 
where ${\displaystyle \bar{r}_I=\sum_{{L\subset \enu{n} \st}\atop {L\cap I\neq \emptyset,\  \bar{L}\cap I\neq \emptyset}}r_L}$ and 
\begin{multline*}
g(t,x,y)=-\un_{\{|I|\geq 4\}}\sum_{{L\subset\enu{n} \st }\atop {|I\cap L|\geq 2, |I\cap \bar{L}|\geq 2 }} r_L e^{-tr(n)}f_{n,I\cap L}(t,x,y)f_{n,I\cap \bar{L}}(t,x,y)\\+\un_{\{|I|\geq 3\}}\sum_{{L \subset\enu{n} \st}\atop {|I\cap L|\geq 2, |I\cap\bar{L}|\geq 1}}(r_L+r_{\bar{L}}) f_{n,L\cap I}(t,x,y)\prod_{\ell\in I\cap\bar{L}}x(\ell).
\end{multline*}
As $\bar{r}_I$ is the probability that the offspring does not inherit 
all the genes at loci $i\in I$ from the same parent, $\bar{r}_I\geq r(n)$. 
Therefore the differential equation $(S_{n,I})$ has a unique solution: 
$$v_{n,I}(t,x,y)=y(I)e^{-\bar{r}_It}+e^{-\bar{r}_It}\int_{0}^{t}g(s,x,y)e^{(\bar{r}_I-r(n))s}ds.$$ 
By our assumptions on the functions $f_{n,J}$ for $J\subsetneq I$,  $g$ is a bounded continuous function on 
$\R_+\times[0,1]^{n}\times [-1,1]^{2^n-n-1}$ such that the value of $g(t,x,y)$ depends on $x$ and $y$  only through the coordinates $x(i)$ for $i\in I$ and $y(L)$ for $L\subset I$ such that $|L|\geq 2$. Therefore, the function $f_{n,I}(t,x,y)=e^{r(n)t}v_{n,I}(t,x,y)$ has the asserted properties. 

\end{itemize}
\paragraph{Conditions (c) and (e).} Condition (c) is easy to verify using formulae  \eqref{momentz1}, \eqref{momentz2},  \eqref{momentz3} describing  the moments of $Z^{(N)}_1-z$. This leads to: 
$$N^2\EE_z[(X_{1}^{(N)}(i)-x(i))^4]=O(N^{-2})\quad \forall i\in\enu{n}, 
\text{ uniformly on }z\in E_N.$$
Similarly, using Lemma~\ref{diffprod}, we obtain 
$$N\EE_z[(Y_{1}^{(N)}(I)-y(I))^2]=O(N^{-1})\quad \forall I\subset\enu{n},\ s. t.\ |I|\geq 2,  
\text{ uniformly on }z\in E_N.$$
\end{proof}
\subsection{Expressions for the drift\label{sectproofdrift}}
We have shown that the $i$-th coordinate of the drift of the limiting diffusion is 
$$(1-x(i))\mu_1-x(i)\mu_0+(1/2-x(i))x(i)(1-x(i))P_{i,s}(x)$$
where 
\begin{multline*}
P_{i,s}(x)=\sum_{J\subset \enu{n}\setminus\{i\}}\sum_{H\subset \enu{n}\setminus\{i\}}(s_{J\cup\{i\},H}-s_{J,H})\times\\
\prod_{j\in J}x(j)\prod_{h\in H}x(h)\prod_{{j\in \enu{n},}\atop{j\not\in J\cup\{i\}}}(1-x(j))\prod_{{h\in \enu{n},}\atop{h\not\in H\cup\{i\}}}(1-x(h)),
\end{multline*}
and, for two subsets $I$ and $J$ of $\enu{n}$, $s_{I,J}$ denotes the assortment parameter 
$s_{\bb{i},\bb{j}}$ for the types  $\bb{i}=(\bb{0}_{I},\bb{1}_{\bar{I}})$  and $\bb{j}=(\bb{0}_J,\bb{1}_{\bar{J}})$. 
The following lemma states that  $P_{i,s}(x)$ is actually a polynomial function in the variables $x(i)(1-x(i))$ for $i\in\enu{n}\setminus\{u\}$:
\begin{lem}\label{lemsimppoly}
 Let $\Lambda$ be a finite subset of $\NN$. 
Consider a family of reals $\beta=\{\beta_{I,J},\ I,J\subset \Lambda\}$ such that 
$\beta_{I,J}=\beta_{I\setminus J,J\setminus I}$ for every $I,J\subset \Lambda$. 
Then, 
\begin{multline}\label{simppoly}
\sum_{J\subset \Lambda}\sum_{H\subset \Lambda}\left(\beta_{J,H}\prod_{j\in J}x(j)\!\prod_{h\in H}x(h)\!\!\prod_{j\in \Lambda\setminus J}(1-x(j))\!\!\prod_{h\in \Lambda\setminus  H}(1-x(h))\right)\\=\sum_{L\subset \Lambda}C_{L}(\beta)\prod_{\ell\in L}x(\ell)(1-x(\ell))
\end{multline}
where $$C_{L}(\beta)=\sum_{T\subset L}(-2)^{|T|-|L|}\sum_{A\subset T}\beta_{A,T\setminus A}.$$
\end{lem}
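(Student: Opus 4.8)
The plan is to expand the left-hand side factor by factor over the indices $\ell\in\Lambda$ and to use the hypothesis $\beta_{I,J}=\beta_{I\setminus J,J\setminus I}$ to collapse the double sum. First I would record the local contribution of each $\ell$: in the term indexed by a pair $(J,H)$ of subsets, the index $\ell$ contributes $x(\ell)^2$ if $\ell\in J\cap H$, $(1-x(\ell))^2$ if $\ell\notin J\cup H$, and $x(\ell)(1-x(\ell))$ if $\ell$ lies in exactly one of $J,H$. Writing $P=J\setminus H$, $Q=H\setminus J$, $B=J\cap H$ and leaving the ``neither'' set $\Lambda\setminus(J\cup H)$ unnamed, these four sets form an ordered partition of $\Lambda$, and as $(J,H)$ ranges over all pairs of subsets the quadruple ranges over all such ordered partitions. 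By the symmetry hypothesis $\beta_{J,H}=\beta_{P,Q}$, so the coefficient of each term depends only on $P$ and $Q$.

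Next I would fix the pair $(P,Q)$ of disjoint subsets, which carries the factor $\beta_{P,Q}\prod_{\ell\in P\cup Q}x(\ell)(1-x(\ell))$, and sum freely over the ways the remaining indices in $\Lambda\setminus(P\cup Q)$ are split between $B$ and its complement. Since each such index contributes $x(\ell)^2$ when placed in $B$ and $(1-x(\ell))^2$ otherwise, this inner sum factorises as $\prod_{\ell\in\Lambda\setminus(P\cup Q)}\big(x(\ell)^2+(1-x(\ell))^2\big)$. The key algebraic observation is then the identity $x^2+(1-x)^2=1-2x(1-x)$, which rewrites this as $\prod_{\ell\notin P\cup Q}\big(1-2x(\ell)(1-x(\ell))\big)$, already a polynomial in the variables $x(\ell)(1-x(\ell))$. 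Setting $T=P\cup Q$ and $\gamma_T=\sum_{A\subset T}\beta_{A,T\setminus A}$ (the sum over the ways to split $T$ into $P=A$ and $Q=T\setminus A$), the left-hand side becomes $\sum_{T\subset\Lambda}\gamma_T\prod_{\ell\in T}x(\ell)(1-x(\ell))\prod_{\ell\notin T}\big(1-2x(\ell)(1-x(\ell))\big)$.

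Finally I would expand each factor $1-2x(\ell)(1-x(\ell))$ for $\ell\notin T$, picking up $(-2)^{|S|}\prod_{\ell\in S}x(\ell)(1-x(\ell))$ for each $S\subset\Lambda\setminus T$, and collect terms by the monomial $\prod_{\ell\in L}x(\ell)(1-x(\ell))$ with $L=T\cup S$. For fixed $L$ the contributing pairs are exactly $T\subset L$ with $S=L\setminus T$, so the coefficient of $\prod_{\ell\in L}x(\ell)(1-x(\ell))$ comes out as $\sum_{T\subset L}(-2)^{|L|-|T|}\gamma_T=\sum_{T\subset L}(-2)^{|L|-|T|}\sum_{A\subset T}\beta_{A,T\setminus A}$, which is precisely $C_L(\beta)$. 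I expect no genuine obstacle beyond careful bookkeeping of the nested sums; the only real idea is the symmetry-driven reduction of the $(J,H)$-sum to a sum over the partition into $(B,P,Q)$ and the complement, after which the identity $x^2+(1-x)^2=1-2x(1-x)$ does all the work. I would only take care to confirm the exponent of $-2$, since the natural computation produces $(-2)^{|L|-|T|}$.
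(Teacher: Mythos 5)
Your proof is correct, but it takes a genuinely different route from the paper's. The paper proves Lemma~\ref{lemsimppoly} by induction on $|\Lambda|$: it splits off a single element $j$, sorts the pairs $(J,H)$ according to whether $j$ lies in $J$, in $H$, in both or in neither, uses the symmetry hypothesis in the form $\eta_{K\cup\{j\},L\cup\{j\}}=\eta_{K,L}$ to reduce to three instances of the identity over $\Lambda\setminus\{j\}$, and then matches coefficients. Your argument is instead a one-pass reindexing of the double sum by the ordered partition $\big(J\cap H,\ J\setminus H,\ H\setminus J,\ \Lambda\setminus(J\cup H)\big)$, after which the symmetry hypothesis collapses the coefficient to $\beta_{P,Q}$ and the sum over the ``both'' set factorises through $x^2+(1-x)^2=1-2x(1-x)$. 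This is cleaner and buys more: your intermediate expression
$\sum_{T\subset\Lambda}\gamma_T\prod_{\ell\in T}x(\ell)(1-x(\ell))\prod_{\ell\notin T}\bigl(1-2x(\ell)(1-x(\ell))\bigr)$
with $\gamma_T=\sum_{A\subset T}\beta_{A,T\setminus A}$ is exactly the factorised form of the drift polynomial (formula \eqref{expresP}, since $\gamma_T=2^{|T|}\delta_i[m(s)](T)$ when $\beta_{A,B}=s_{A\cup\{i\},B}-s_{A,B}$), which the paper has to derive separately via Lemma~\ref{multivariate} in the appendix; your computation delivers the expanded and factorised forms simultaneously and explains where the powers of $-2$ come from. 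Finally, you are right to flag the exponent: the correct coefficient is $C_L(\beta)=\sum_{T\subset L}(-2)^{|L|-|T|}\sum_{A\subset T}\beta_{A,T\setminus A}$, as your computation produces; the exponent $(-2)^{|T|-|L|}$ in the printed statement is a typo, as confirmed by the formula for $\alpha_{i,L}(s)$ immediately following the lemma and by the exponent $(-2)^{|L|-1-|T|}$ appearing inside the paper's own inductive step.
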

\begin{proof}
 Let $P_{\Lambda}(\beta)$ denote the polynomial function on the right-hand side. The proof is by induction on $|\Lambda|$. First,  $P_{\emptyset}(\beta)(x)=\beta_{\emptyset,\emptyset}=C_{\emptyset}(\beta)$. \\
Let $n\in\NN$. Assume that the equality \eqref{simppoly} holds for every subset $\Lambda$ of $\NN$ with at most $n$ elements and every  family of reals $\beta$ satisfying the assumptions of the lemma. \\ 
Let $\Lambda$ be a  subset of $\NN$ with $n+1$ elements, let $j$ be an element of  $\Lambda$ and let $\eta=\{\eta_{I,J},\ I,J\subset \Lambda\}$ be a family of reals such that 
$\eta_{I,J}=\eta_{I\setminus J,J\setminus I}$ for every $I,J\subset \Lambda$. 
We split $P_{\Lambda}(\eta)$ into a sum over the subsets of $\Lambda$ containing $j$ and a sum over the subsets of $\Lambda\setminus\{j\}$ to obtain the following expression:
\begin {multline*}
P_{\Lambda}(\eta)(x)=\!\!\sum_{K\subset \Lambda\setminus\{j\}}\sum_{L\subset \Lambda\setminus\{j\}}\prod_{k\in K}x(k)\!\prod_{\ell\in L}x(\ell)\!\!\prod_{k\in \Lambda\setminus K}(1-x(k))\!\!\prod_{h\in \Lambda\setminus  L}(1-x(h))\times\\ \Big(x(j)^{2}\eta_{K\cup\{j\},L\cup\{j\}}+(1-x(j))^2\eta_{K,L}+x(j)(1-x(j))(\eta_{K\cup\{j\},L}+\eta_{K,L\cup\{j\}})\Big).
\end{multline*}
This expression can be simplified by using that $\eta_{K\cup\{j\},L\cup\{j\}}=\eta_{K,L}$:
\begin{multline*}P_{\Lambda}(\eta)(x)=P_{\Lambda\setminus\{j\}}(\eta^{(0)})(x)\\+x(j)(1-x(j))\big(P_{\Lambda\setminus\{j\}}(\eta^{(1)})(x)+P_{\Lambda\setminus\{j\}}(\eta^{(2)})(x)-2P_{\Lambda\setminus\{j\}}(\eta^{(0)})(x)\big),
\end{multline*}
where $\eta^{(0)}$, $\eta^{(1)}$ and $\eta^{(2)}$ are the following three families of reals indexed by the pairs of subsets of $\Lambda\setminus\{j\}$:  $$\eta^{(0)}_{A,B}=\eta_{A,B},\quad \eta^{(1)}_{A,B}=\eta_{A\cup\{j\},B}\text{ and } \eta^{(2)}_{A,B}=\eta_{A,B\cup\{j\}}\text{ for every } A,B\subset\Lambda\setminus\{j\}.$$ 
The inductive hypothesis applies to $\Lambda\setminus\{j\}$ and the three families of reals $\eta^{(0)}$, $\eta^{(1)}$ and $\eta^{(2)}$:
\[
 P_{\Lambda}(\eta)(x)=\sum_{L\subset \Lambda\setminus\{j\}}C_{L}(\eta)\prod_{\ell\in L}x(\ell)(1-x(\ell))+\sum_{L\subset \Lambda,\ j\in L}\tilde{C}_{L}\prod_{\ell\in L}x(\ell)(1-x(\ell)),
\]
where \[\tilde{C}_{L}=\sum_{T\subset L\setminus\{j\}}(-2)^{|L|-1-|T|}\sum_{A\subset T}(\eta_{A\cup\{j\},T\setminus A}+\eta_{A,(T\cup\{j\})\setminus A}-2\eta_{A,T\setminus A}).
\]
The  double sum of the terms $\eta_{A\cup\{j\},T\setminus A}+\eta_{A,(T\cup\{j\})\setminus A}$  is equal to:
$$
\sum_{T\subset L,\ j\in T}(-2)^{|L|-|T|}\sum_{A\subset T}\eta_{A,T\setminus A}. 
$$
Therefore, $\tilde{C}_{L}=C_{L}(\eta)$ and $P_{\Lambda}(\eta)(x)=\sum_{L\subset \Lambda}C_{L}(\eta)\prod_{\ell\in L}x(\ell)(1-x(\ell))$ which completes the proof by induction.
\end{proof}
By Lemma~\ref{lemsimppoly}, the expanded form of $P_{i,s}$ as a polynomial function of the $n-1$ variables $x(j)(1-x(j))$, $j\neq i$ is: 
\begin{equation}
\label{expandP}
 P_{i,s}(x)=\sum_{L\subset \enu{n}\setminus\{i\}}\alpha_{i,L}(s)\prod_{\ell\in L}x(\ell)(1-x(\ell))
\end{equation}
where $$\alpha_{i,L}(s)=\sum_{T\subset L}(-2)^{|L|-|T|}\sum_{A\subset T}(s_{A\cup\{i\},T\setminus A}-s_{A,T\setminus A}).$$
The coefficient $\alpha_{i,L}(s)$ can be rewritten in terms of
the mean values of the assortment parameters  $m_{T}(s)$ for $T\subset L$:
\[\alpha_{i,L}(s)= 2^{|L|}\sum_{T\subset L}(-1)^{|L|-|T|}(m_{T\cup\{i\}}(s)-m_{T}(s))
= 2^{|L|}\sum_{T\subset L}(-1)^{|L|-|T|}\delta_{i}[m(s)](T).
\]
Indeed, it follows from the assumption \Href{Hpairing} that for every 
$i\in\enu{n}$ and $T\subset\enu{n}\setminus\{i\}$, 
$$m_{T}(s) =2^{-|T|}\sum_{A\subset T}s_{A,T\setminus A}\text{ and }
m_{T\cup\{i\}}(s) =2^{-|T|}\sum_{A\subset T}s_{A\cup \{i\},T\setminus A}.$$ 
Using formula \eqref{expdiffset}, we obtain $\alpha_{i,L}(s)=2^{|L|}\delta_{L\cup \{i\}}[m(s)](\emptyset)$.

The following factorised form of the polynomial function $P_{i,s}$ can be derived from a 
general identity stated in Lemma~\ref{multivariate}:
\[
P_{i,s}(x)=\sum_{A\subset \enu{n}\setminus\{i\}}\delta_{i}[m(s)](A)\prod_{k\in A}2x(k)(1-x(k))\prod_{\ell\not\in A\cup\{i\}}\big(1-2x(\ell)(1-x(\ell))\big).
\]
%
\appendix
\section{Appendix}
\subsection{Combinatorial formulae for difference operators\label{appendInv}}
This section collects some combinatorial formulae used to study the 
limiting diffusion. 
Let $E$ be a finite set and $t$ be a real. For a function $f$ defined on $\mc{P}(E)$, we set $$S_t(f)(A)=\sum_{B\subset A}t^{|A|-|B|}f(B) \text{ for every } A\in\mc{P}(E)$$ (with the usual convention $a^{0}=1$ for every $a\in \R$). 
Most of the combinatorial formulae used in the paper can be deduced from this general identity: 
\begin{lem}\label{multivariate}
 Let $U$ be a subset of $E$ and let $\{x_{u}, u\in U\}$ be a family of reals.\\
 \begin{equation}
  \sum_{A\subset U}S_t(f)(A)\prod_{i\in A}x_i= \sum_{B\subset U}f(B)\prod_{i\in B}x_i\prod_{j\in U\setminus B}(1+tx_j). 
 \end{equation}
\end{lem}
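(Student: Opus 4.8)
The plan is to prove the identity directly by substituting the definition of $S_t(f)$ into the left-hand side and then interchanging the order of summation. This converts the outer sum over $A$ into a sum over the "inner" sets $B$, after which the remaining degree of freedom factorises into a product over $U\setminus B$, which is exactly the shape of the right-hand side.

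First I would expand the left-hand side using $S_t(f)(A)=\sum_{B\subset A}t^{|A|-|B|}f(B)$, obtaining a double sum over pairs $B\subset A\subset U$. The key manipulation is to swap the order so that $B$ is summed on the outside and $A$ ranges over all sets with $B\subset A\subset U$:
\begin{equation*}
\sum_{A\subset U}S_t(f)(A)\prod_{i\in A}x_i=\sum_{B\subset U}f(B)\sum_{B\subset A\subset U}t^{|A|-|B|}\prod_{i\in A}x_i.
\end{equation*}

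Next I would parametrise the inner sum by writing $A=B\cup C$ with $C\subset U\setminus B$, so that $|A|-|B|=|C|$ and $\prod_{i\in A}x_i=\prod_{i\in B}x_i\prod_{i\in C}x_i$. Pulling the factor $\prod_{i\in B}x_i$ out, the inner sum becomes $\sum_{C\subset U\setminus B}\prod_{i\in C}(tx_i)$, which factorises as $\prod_{j\in U\setminus B}(1+tx_j)$ by the standard expansion of a product of binomials over subsets. Substituting this back yields exactly $\sum_{B\subset U}f(B)\prod_{i\in B}x_i\prod_{j\in U\setminus B}(1+tx_j)$, the right-hand side.

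The argument is essentially bookkeeping, so there is no deep obstacle; the only step requiring care is the interchange of summation together with the reindexing $A=B\cup C$, where one must check that $C$ ranges freely over all subsets of $U\setminus B$ (and only those) so that the geometric-type factorisation $\sum_{C}\prod_{i\in C}(tx_i)=\prod_{j}(1+tx_j)$ is valid. A one-line induction on $|U|$ would be an alternative if one preferred to avoid the explicit reindexing, but the direct computation above is the cleanest route.
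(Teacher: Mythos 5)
Your proof is correct and follows essentially the same route as the paper: interchange the double sum so that $B$ is outermost, reindex via $C=A\setminus B$, and recognise $\sum_{C\subset U\setminus B}t^{|C|}\prod_{i\in C}x_i=\prod_{j\in U\setminus B}(1+tx_j)$. Nothing to add.
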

\begin{proof}
One way to derive this equality is to interchange the sum on the right-hand side of the 
equation with the sum that appears in the definition of $S_t(f)(A)$, to use the new summation 
index $C=A\setminus B$ and to recognize the following expansion of the product of the 
terms $1+tx_i$:
$$\prod_{i\in U\setminus B}(1+tx_i)=\sum_{C\subset U\setminus B}t^{|C|}\prod_{i\in C}x_i.$$
\end{proof}
As $S_{-1}(f)(A)$ is nothing other than  $\delta_{A}[f](\emptyset)$ by \eqref{expdiffset}, 
if we apply Lemma~\ref{multivariate} with $t=-1$, $f(A)=\delta_{i}[m(s)](A)$ and the family of 
reals $\{2x(j)(1-x(j)),\ j\in\enu{n}\setminus\{i\}\}$, we obtain the following equality 
\begin{multline*}\sum_{A\subset \enu{n}\setminus\{i\}}2^{|A|}\delta_{A\cup \{i\}}[m(s)](\emptyset)\prod_{\ell\in A}x(\ell)(1-x(\ell))
\\=\sum_{A\subset \enu{n}\setminus\{i\}}\delta_{i}[m(s)](A)\prod_{k\in A}2x(k)(1-x(k))\prod_{\ell\not\in A\cup\{i\}}\Big(1-2x(\ell)(1-x(\ell))\Big).
\end{multline*}
This shows the equality between the expanded form \eqref{Pgeneral} and factorised 
form \eqref{expresGn} of the polynomial term $P_{i,s}(x)$ appearing in the drift of 
the limiting diffusion.

By taking $x_i=-1/t$ for every $i\in U$ in Lemma~\ref{multivariate}, we can deduce the inverse 
of the operator $S_t$. This gives a useful formula for inverting a relation between two sequences 
indexed by the subsets of a finite set. 
\begin{cor}\label{invformula}
The inverse of the operator $S_t$ is $S_{-t}$, that is 
$$f(A)=\sum_{B\subset A}(-t)^{|A|-|B|}S_t(f)(B) \text{ for every } A\subset E.$$
\end{cor}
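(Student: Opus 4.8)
The plan is to recognize that the right-hand side of the claimed identity is nothing but $S_{-t}$ applied to $S_t(f)$: since by definition $S_{-t}(g)(A)=\sum_{B\subset A}(-t)^{|A|-|B|}g(B)$, taking $g=S_t(f)$ reproduces exactly the expression in the statement. Hence the corollary asserts the single operator identity $S_{-t}\circ S_t=\mathrm{Id}$, and this is all that needs to be verified.

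To do so I would exploit Lemma~\ref{multivariate} with the particular choice of weights suggested by the remark preceding the corollary. Assume first $t\neq 0$ and fix a subset $U$ of $E$. Apply Lemma~\ref{multivariate} to this $U$ with $x_i=-1/t$ for every $i\in U$. Then each factor on the right-hand side of the lemma satisfies $1+tx_j=0$, so the product $\prod_{j\in U\setminus B}(1+tx_j)$ vanishes as soon as $U\setminus B\neq\emptyset$; only the term $B=U$ survives. The right-hand side therefore collapses to $f(U)\prod_{i\in U}x_i=f(U)(-1/t)^{|U|}$, while the left-hand side reads $\sum_{A\subset U}S_t(f)(A)(-1/t)^{|A|}$. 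This yields
$$\sum_{A\subset U}S_t(f)(A)\,(-1/t)^{|A|}=f(U)\,(-1/t)^{|U|}.$$

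The remaining step is routine algebra: multiply through by $(-t)^{|U|}$ and use $(-1/t)^{|A|}(-t)^{|U|}=(-t)^{|U|-|A|}$ (the sign exponents $|A|+|U|$ and $|U|-|A|$ have the same parity), which gives $f(U)=\sum_{A\subset U}(-t)^{|U|-|A|}S_t(f)(A)$. Renaming $U$ as $A$ and the summation index $A$ as $B$ produces precisely the stated formula. Finally I would dispose of the excluded value $t=0$ either by direct inspection, noting $S_0=\mathrm{Id}$ on account of the convention $0^0=1$ so that both sides reduce to $f(A)$, or, more slickly, by observing that both sides are polynomials in $t$ agreeing for all $t\neq 0$ and hence for all $t$.

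The only real obstacle here is bookkeeping: one must keep the powers of $-1/t$ and $-t$ straight, confirm the parity identity $(-1)^{|A|+|U|}=(-1)^{|U|-|A|}$, and remember to treat $t=0$ separately so that the division by $t$ in the key specialization $x_i=-1/t$ is legitimate. There is no conceptual difficulty once the claim is read as the operator statement $S_{-t}\circ S_t=\mathrm{Id}$ and Lemma~\ref{multivariate} is invoked with these weights, the point being simply that the specialization annihilates every term except $B=U$.
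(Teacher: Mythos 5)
Your proof is correct and follows exactly the route the paper takes: the paper's entire argument is the remark that one should take $x_i=-1/t$ in Lemma~\ref{multivariate}, and you carry out precisely that specialization (with the bookkeeping and the $t=0$ case made explicit, which the paper leaves implicit).
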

From Corollary~\ref{invformula} we can deduce the following identity for the finite difference 
operator: 
 \begin{equation}\label{invdifset}
 f(A)=\sum_{B\subset A}\delta_B[f](\emptyset) \text{ for every }A\in\mc{P}(E).
\end{equation} 

By considering the operator $S_t$ for a function $f$ which is constant on subsets having the 
same number of elements, we can rewrite the previous relations to obtain useful formulae
relating two sequences indexed by the integers $0,1,\ldots,n$. 
\begin{cor}\label{invformulacst}
Let $t$ be a real number.  Let $n\in\NN^*$.  For a function $f$ defined on $\enum{0}{n}$, 
let $s_t(f)$ be the function defined by: 
$$s_t(f)(k)=\sum_{\ell=0}^{k}\binom{k}{\ell}t^{k-\ell}f(\ell)\text{ for every }k\in\enu{n}.$$  Then,
\begin{enumerate} \item For every  $x\in \R^n$ 
$$\sum_{j=0}^{n}s_t(f)(j)e_{n,j}(x)=\sum_{\ell=0}^{n}f(\ell)\sum_{L\subset \enu{n} \st |L|=\ell\ }\prod_{i\in L}x_i\prod_{j\in \enu{n}\setminus L}(1+tx_j)$$
where $e_{n,j}$ denotes the elementary polynomial of degree $j$ in $n$ variables: 
$$e_{n,j}(x)=\sum_{J\subset \enu{n} \st |J|=j \ }\prod_{i\in J}x_i. $$
 \item The operator 
 $s_{-t}$ is the inverse of the operator $s_t$:  
$$f(k)=\sum_{\ell=0}^{k}\binom{k}{\ell}(-t)^{k-\ell}s_t(f)(\ell)\text{ for every }k\in\enu{n}.$$
\end{enumerate}
\end{cor}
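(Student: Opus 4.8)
The plan is to deduce both assertions from the set-indexed identities already established, namely Lemma~\ref{multivariate} and Corollary~\ref{invformula}, by specialising them to functions on $\mc{P}(\enu{n})$ that depend only on the cardinality of their argument. Given $f$ defined on $\enum{0}{n}$, I would first introduce the set function $\hat{f}$ on $\mc{P}(\enu{n})$ by setting $\hat{f}(A)=f(|A|)$. The single computation that drives everything is that $S_t$ preserves cardinality-dependence: since exactly $\binom{|A|}{\ell}$ subsets $B\subset A$ satisfy $|B|=\ell$,
$$S_t(\hat{f})(A)=\sum_{B\subset A}t^{|A|-|B|}f(|B|)=\sum_{\ell=0}^{|A|}\binom{|A|}{\ell}t^{|A|-\ell}f(\ell)=s_t(f)(|A|).$$

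For assertion~1, I would apply Lemma~\ref{multivariate} with $E=U=\enu{n}$, the function $\hat{f}$, and the given family $\{x_i,\ i\in\enu{n}\}$. On the left-hand side, grouping the subsets $A$ according to their common cardinality $j$ and using the display above turns $\sum_{A\subset\enu{n}}S_t(\hat{f})(A)\prod_{i\in A}x_i$ into $\sum_{j=0}^{n}s_t(f)(j)\,e_{n,j}(x)$, by definition of the elementary symmetric polynomial. On the right-hand side, grouping the subsets $B$ by cardinality $\ell$ and using $\hat{f}(B)=f(|B|)$ yields exactly $\sum_{\ell=0}^{n}f(\ell)\sum_{L\subset\enu{n},\ |L|=\ell}\prod_{i\in L}x_i\prod_{j\in\enu{n}\setminus L}(1+tx_j)$, which is the claimed identity.

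For assertion~2, I would specialise Corollary~\ref{invformula}, which gives $\hat{f}(A)=\sum_{B\subset A}(-t)^{|A|-|B|}S_t(\hat{f})(B)$ for every $A\subset\enu{n}$. Substituting $\hat{f}(A)=f(|A|)$ and $S_t(\hat{f})(B)=s_t(f)(|B|)$ and again grouping subsets $B\subset A$ by cardinality gives $f(|A|)=\sum_{\ell=0}^{|A|}\binom{|A|}{\ell}(-t)^{|A|-\ell}s_t(f)(\ell)$; choosing any $A$ with $|A|=k$ delivers the stated inversion formula.

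I do not expect a genuine obstacle here: the entire content is the observation that $S_t$ restricts to $s_t$ on cardinality-dependent functions, so both parts are immediate corollaries of the set-indexed results. The only points requiring care are the bookkeeping in passing between sums over subsets and sums over cardinalities (the $\binom{|A|}{\ell}$ factors) and keeping the two roles of the symbol $f$---integer-indexed versus set-indexed---clearly separated. Should one prefer to avoid invoking Corollary~\ref{invformula}, assertion~2 can instead be proved directly by substituting the definition of $s_t(f)$, interchanging the two finite sums, and evaluating the inner coefficient of $f(m)$ via $\binom{k}{\ell}\binom{\ell}{m}=\binom{k}{m}\binom{k-m}{\ell-m}$ and the binomial theorem, which collapses it to $\binom{k}{m}(t-t)^{k-m}=\binom{k}{m}\un_{\{k=m\}}$.
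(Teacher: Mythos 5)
Your proposal is correct and follows exactly the route the paper intends: the paper derives Corollary~\ref{invformulacst} precisely "by considering the operator $S_t$ for a function which is constant on subsets having the same number of elements," i.e.\ by the same specialisation of Lemma~\ref{multivariate} and Corollary~\ref{invformula} via $\hat{f}(A)=f(|A|)$ and the identity $S_t(\hat{f})(A)=s_t(f)(|A|)$ that you write out. Your concluding remark giving a direct Vandermonde-style verification of assertion~2 is also sound, but it is extra to what the paper does.
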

This corollary provides identities for the forward finite difference operators of any orders 
since  $s_{-1}(f)(k)=\delta^{(k)}[f](0)$ for every $k\in\enum{0}{n}$.  In particular, this leads 
to the following formula used in the proof of Proposition~\ref{propindep}:
\begin{equation}
 \label{deltarel}
\sum_{\ell=0}^{k}\binom{k}{\ell}\delta^{(\ell)}[f](0)=f(k) \text{ for every }k\in\enu{n} 
\end{equation}
and Lemma~\ref{polydelta} used in the proof of Proposition~\ref{descloirev}.
\subsection{Example \ref{exquadraticassort}\label{annexexquadraticassort}}
Under the hypotheses of the assertion 1-(b) of Proposition~\ref{descloirev}, the logarithm of the stationary density $h_{n,s,\mu}$  takes its maximum value  in $[0,1/2]^n$ at a unique point $(\xi_0,\ldots,\xi_0)$ such that $\lambda_0=\xi_0(1-\xi_0)$ is the unique solution in $]0,1/4[$ of the equation $\mathcal{E}^{'}_{0}$:
$$2\mu-1+\sum_{k=0}^{n-1}2^k\delta^{(k+1)}[m](0)\binom{n-1}{k}y^{k+1}=0.$$
In $[0,1/2]^n$ the  saddle points of index $n-1$  has one coordinate equal to 1/2 and $(n-1)$ coordinates equal to $\xi_1$ where $\lambda_1=\xi_1(1-\xi_1)$ is the unique solution  in $]0,1/4[$ of the equation $\mathcal{E}^{'}_{1}$:
$$2\mu-1+\sum_{k=0}^{n-2}\binom{n-2}{k}\left(2^{k-1}\delta^{(k+2)}[m](0)+2^k\delta^{(k+1)}[m](0)\right)y^{k+1}=0.$$
If we denote by $h_{n,i}$ the value of  $h_{n,s,\mu}$ at a critical point of index $n-i$ then  
\begin{alignat*}{2}h_{n,0}-h_{n,n}=&(2\mu+1)n\ln(4\lambda_0)+\sum_{k=0}^{n-1}2^{k}\delta^{(k+1)}[m](0)\binom{n}{k+1}(\lambda^{k+1}_0-(1/4)^{k+1}),\\
h_{n,0}-h_{n,1}=&(2\mu+1)\Big(n\ln(\frac{\lambda_0}{\lambda_1})+\ln(4\lambda_1)\Big)\\
&+\sum_{k=0}^{n-1}2^{k}\delta^{k+1}[m](0)\left(\binom{n-1}{k+1}(\lambda^{k+1}_0-\lambda^{k+1}_{1})\un_{\{k\leq n-2\}}+\binom{n-1}{k}(\lambda_{0}^{k+1}-\frac{1}{4}\lambda^{k}_1)\right).
\end{alignat*}
If  we define the assortment by means of the Hamming criterion with the quadratic sequence of parameters: $s_k=s_0-(bk+ck^2)$ $\forall k\in\enum{0}{n}$ with $c>0$ and $b+c>0$, 
then $$\delta^{(1)}[m](k)=-(b+c+2kc)\ \forall k\in\enum{0}{n-1},\    \delta^{(2)}[m](0)=-2c \text{ and }\delta^{(r)}[m](0)=0\ \forall r\geq 3.$$ In this case, $\lambda_0$ and $\lambda_1$ are solutions of quadratic  functions: $2\mu-1 -(b+c)\lambda_0-4c(n-1)\lambda^{2}_0=0$ and $2\mu-1 -(b+2c)\lambda_1-4c(n-2)\lambda^{2}_1=0$. 
 After some  computations, we obtain: 
$h_{n,0}-h_{n,n}\underset{n\rightarrow +\infty}{\sim} \frac{c}{8}n^2$ and $h_{n,0}-h_{n,1}\underset{n\rightarrow+\infty}{\sim} n^{1/2}1/2\sqrt{c(2\mu-1)}$. 
\subsection{Property of a symmetric matrix}
The following lemma is used to determine the nature of the critical points of the density of 
the invariant measure (Proposition~\ref{descloirev}). 
\begin{lem}\label{symmatrix}
 For a real $a$ and two integers $k$ and $n$ so that $n\geq 1$ and $0\leq k\leq n$, let $M_{n,k}(a)$ denote the following symmetric matrix: 
$$M_{n,k}=\begin{pmatrix}\bb{A}_{k} & \bb{B}_{k,n-k}\\
\bb{B}_{n-k,k} & \bb{A}_{n-k}
\end{pmatrix}$$
where
\begin{itemize}
 \item $\bb{A}_{k}$ denotes  the following $k$-by-$k$ matrix: $\bb{A}_{k}=\begin{pmatrix} 1&a&\cdots&a\\
a&\ddots&\ddots &\vdots\\
\vdots &\ddots& \ddots & a\\
a&\cdots&a&1\\
\end{pmatrix}$
\item $\bb{B}_{k_1,k_2}$ denotes the $k_1$-by-$k_2$ matrix all the elements of which are equal to $-a$. 
\end{itemize}
If $0\leq a < 1$ then $M_{n,k}(a)$ is  positive definite. 
\end{lem}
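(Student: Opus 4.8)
The plan is to recognise $M_{n,k}(a)$ as a rank-one perturbation of a multiple of the identity and then read off its spectrum. First I would introduce the sign vector $\bb{\varepsilon}\in\R^n$ defined by $\varepsilon_i=1$ for $i\in\enu{k}$ and $\varepsilon_i=-1$ for $i\in\enum{k+1}{n}$. Inspecting the three blocks, the $(i,j)$ entry of $M_{n,k}(a)$ equals $1$ when $i=j$ and $a\,\varepsilon_i\varepsilon_j$ when $i\neq j$: two indices lying in the same block satisfy $\varepsilon_i\varepsilon_j=1$, which produces the intra-block value $a$, whereas two indices lying in different blocks satisfy $\varepsilon_i\varepsilon_j=-1$, which produces the inter-block value $-a$. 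Writing each diagonal entry as $1=(1-a)+a\varepsilon_i^2$, this gives the compact identity
$$M_{n,k}(a)=(1-a)\,I_n+a\,\bb{\varepsilon}\bb{\varepsilon}^{\top}.$$

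From this identity the conclusion is immediate, and I would argue it via the quadratic form. For any $v\in\R^n$,
$$v^{\top}M_{n,k}(a)\,v=(1-a)\,\|v\|^2+a\,\langle\bb{\varepsilon},v\rangle^2.$$
When $0\le a<1$ the coefficient $1-a$ is strictly positive while $a\ge 0$, so the right-hand side is the sum of a term that is strictly positive for $v\neq 0$ and a nonnegative term; hence $v^{\top}M_{n,k}(a)\,v>0$ for every $v\neq 0$, which is exactly positive definiteness. Equivalently, since $\bb{\varepsilon}\bb{\varepsilon}^{\top}$ has eigenvalue $\|\bb{\varepsilon}\|^2=n$ on the line spanned by $\bb{\varepsilon}$ and eigenvalue $0$ on its orthogonal complement, the eigenvalues of $M_{n,k}(a)$ are $1+(n-1)a$ (simple) and $1-a$ (with multiplicity $n-1$), both strictly positive for $0\le a<1$.

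There is essentially no obstacle here: the single point requiring care is the bookkeeping that the inter-block entries carry the sign $-a$ and the intra-block entries carry $+a$, which is precisely the information encoded by the choice of $\bb{\varepsilon}$. The degenerate cases $k=0$ and $k=n$ (one empty block) and $a=0$ (where $M_{n,k}(a)=I_n$) are handled by the same computation with no modification.
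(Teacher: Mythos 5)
Your proof is correct. Both your argument and the paper's begin from the same reformulation: introduce the sign vector $\bb{\varepsilon}$ with $\varepsilon_i=1$ for $i\leq k$ and $\varepsilon_i=-1$ for $i>k$, so that the quadratic form of $M_{n,k}(a)$ reads $\sum_i x_i^2+2a\sum_{i<j}\varepsilon_i\varepsilon_j x_i x_j$. Where you diverge is in how positive definiteness is then extracted. The paper proceeds by induction on $n$, peeling off one variable at a time via the completing-the-square identity
$$Q_{n,k,a}(x)=\Big(x_n+a\varepsilon_n\sum_{i=1}^{n-1}\varepsilon_ix_i\Big)^2+(1-a^2)\Big(\sum_{i=1}^{n-1}x_{i}^{2}+2b\sum_{1\leq i<j\leq n-1}\varepsilon_i\varepsilon_jx_ix_j\Big),\qquad b=\tfrac{a}{1+a}\in[0,1[,$$
which reduces the claim to the same statement for $n-1$ with a new parameter $b$ still in $[0,1[$. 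You instead observe the global rank-one decomposition $M_{n,k}(a)=(1-a)I_n+a\,\bb{\varepsilon}\bb{\varepsilon}^{\top}$, so that $v^{\top}Mv=(1-a)\|v\|^2+a\langle\bb{\varepsilon},v\rangle^2>0$ for $v\neq 0$, with no induction needed. Your route is shorter and yields more: it identifies the full spectrum ($1-a$ with multiplicity $n-1$ and $1+(n-1)a$ simple), which the paper's inductive argument does not produce. The paper's version, on the other hand, is essentially a Cholesky-type reduction and would adapt more readily if the off-diagonal coefficient were allowed to vary from step to step. Both are complete and correct; yours is arguably the cleaner proof of the stated lemma.
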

\begin{proof}
 Let $Q_{n,k,a}$ denote the quadratic form  with matrix $M_{n,k}(a)$ in the canonical basis. For every $x\in\R^n$, $Q_{n,k,a}(x)=\sum_{i=1}^{n}x^{2}_{i}+2a\sum_{1\leq i<j\leq n}\epsilon_i\epsilon_jx_ix_j$, where $\epsilon_{1}=\ldots=\epsilon_k=1$ and 
 $\epsilon_{k+1}=\ldots=\epsilon_{n}=-1$. 
This lemma can be established by induction on $n$ by using the following decomposition of $Q_{n,k,a}(x)$:
$$Q_{n,k,a}(x)=(x_n+a\epsilon_n\sum_{i=1}^{n-1}\epsilon_ix_i)^2+(1-a^2)\Big(\sum_{i=1}^{n-1}x_{i}^{2}+2b\sum_{1\leq i<j\leq n-1}\epsilon_i\epsilon_jx_ix_j\Big).$$ 
where $b=\frac{a}{1+a}\in[0,1[$.
\end{proof}
%
\fancyhead[LO]{}

\end{document}